\documentclass{article}
\usepackage[letterpaper,top=2cm,bottom=2cm,left=3cm,right=3cm,marginparwidth=1.75cm]{geometry}
\usepackage[dvipsnames]{xcolor}
\usepackage{tabularx}
\usepackage{amsmath}
\usepackage{graphicx}
\usepackage{amsmath}
\usepackage{mathtools}
\usepackage{amsfonts}
\usepackage{amssymb}
\usepackage{amsthm}
\usepackage{cite}
\usepackage[final]{hyperref}
\usepackage{marvosym}
\usepackage{mathrsfs}
\usepackage{enumitem}
\usepackage{dsfont}
\usepackage{tikz-cd}
\tikzset{
    circ/.style={circle, fill=black, inner sep=2pt, node contents={}}
}
\usepackage{comment}
\usepackage[utf8]{inputenc}
\usepackage{changepage}
\usepackage[format=plain, font=footnotesize]{caption}
\setcounter{tocdepth}{1}

\hypersetup{
	colorlinks=true,       % false: boxed links; true: colored links
	linkcolor=orange,        % color of internal links
	citecolor=purple,        % color of links to bibliography
	filecolor=magenta,     % color of file links
	urlcolor=blue
}
\usepackage{blindtext}

\newcommand{\PP}{\mathbb{P}}

\newcommand{\CC}{\mathbb{C}}

\theoremstyle{plain}
\newtheorem{define}{Definition}[section]
\theoremstyle{plain}
\newtheorem{theorem}[define]{Theorem}
\theoremstyle{plain}

\theoremstyle{definition}

\theoremstyle{definition}

\theoremstyle{definition}

\theoremstyle{definition}

\theoremstyle{definition}

\theoremstyle{definition}

\theoremstyle{definition}

\theoremstyle{plain}
\newtheorem{proposition}[define]{Proposition}
\theoremstyle{plain}
\newtheorem{lemma}[define]{Lemma}
\theoremstyle{plain}
\newtheorem{corollary}[define]{Corollary}
\theoremstyle{remark}
\newtheorem*{remark}{Remark}
\theoremstyle{plain}

\usepackage[ruled,vlined]{algorithm2e}
\usepackage{protosem}
\usepackage{authblk}

\providecommand{\keywords}[1]{\small \textbf{Keywords}: #1}

\begin{document}

\title{Line Multiview Varieties}

\thispagestyle{empty}
\date{}

\author[1]{Paul Breiding}
\author[3]{Felix Rydell}
\author[2]{Elima Shehu}
\author[3]{Angélica Torres} 
\affil[1]{{\small University of Osnabr\"uck, Germany}}
\affil[2]{{\small Max Planck Institute for Mathematics in the Sciences, Leipzig, Germany}}
\affil[3]{{\small KTH Royal Institute of Technology, Stockholm, Sweden
}}

\maketitle

\begin{abstract}
We present an algebraic study of line correspondences for pinhole cameras, in contrast to the thoroughly studied point correspondences. We define the line multiview variety as the Zariski closure of the image of the map projecting lines in 3--space to tuples of image lines in 2--space. We prove that in the case of generic camera matrices {the line multiview variety is a determinantal variety} and we provide a complete {set-theoretic} description for any camera arrangement. We investigate basic properties of this variety such as dimension, smoothness, and multidegree. Finally, we give experimental results for the Euclidean distance degree and robustness under noise for the triangulation of lines.
\end{abstract}

\keywords{3D reconstruction, algebraic vision, multiview varieties, line correspondences.}

\pagenumbering{arabic}

\section*{Introduction} 
Computer vision is a field of artificial intelligence that studies how computers perceive information from images. {A classical problem is structure-from-motion, }%In this paper we focus on multiview varieties, a mathematical model appearing in 3D reconstruction problems, 
where the task is to create a 3D model of an object from 2D images {taken by unknown cameras}. Such reconstruction problems are central in computer vision with applications to creating models of cities \cite{agarwal2010reconstructing}, modeling clouds \cite{kowsuwan20093d}, and modeling environments for autonomous vehicles \cite{milella20143d}. 
Given a set of $m$ images, the reconstruction process starts by identifying sets of points or lines in one (possibly noisy) image that are recognizable as the same points or lines in another image. These are called \textit{correspondences}. The corresponding points and lines are then used to estimate the positions of the cameras, and reconstruct the original 3D points or lines. The last part of the process is called \textit{triangulation}. 

{Given fixed cameras, the \textit{multiview variety} is the \textit{Zariski closure} of all point correspondences, which means that it is the smallest algebraic variety (i.e., vanishing set of a system of polynomial equations) that contains all point correspondences. Such varieties have, for different camera models, been studied before with tools from algebraic geometry}. In this work, we consider \textit{pinhole cameras:} a projective linear map $C: \mathbb{P}^3 \dashrightarrow \mathbb{P}^2$ defined by a full rank $3\times 4$ matrix~$C$. This camera model{ is the most commonly used camera in state-of-the-art reconstruction algorithms, and is the best understood model from a theoretical point of view. An extensive account of the pinhole cameras is given by \cite{Hartley2004}.} %and in the context of algebraic vision to provide a mathematical foundation for reconstruction algorithms \cite{Hartley2004}.
\color{black}Ponce, Sturmfels, and Trager \cite{ponce:hal-01423057} introduce geometric cameras as a generalization of pinhole cameras. More recently, in the manuscript~\cite{cid_ruiz2021} Cid-Ruiz, Clark, and Mohammadi develop a nonlinear analog for multiview varieties and compute their multidegrees. For a survey on camera models we refer the reader to~\cite{sturm11}{ and for a survey on algebraic vision as a whole we refer the reader to \cite{kileel2022snapshot}}. 

For an arrangement $\mathcal{C}=(C_1,\ldots ,C_m)$ of $m\geq 2$ pinhole cameras, the map $\Phi_\mathcal{C}:\mathbb{P}^3\dashrightarrow (\mathbb{P}^2)^m$ models the process of taking $m$ images with $m$ cameras. It maps a point $P$ to the tuple of images $(C_1P,\ldots ,C_mP) $ and is defined everywhere except at the camera centers, i.e., the kernels of the $3\times 4$ matrices $C_i$.
A point configuration in $\Phi_\mathcal{C}(\mathbb P^3)\subset (\mathbb{P}^2)^m$ 
is a point correspondence. {The multiview variety $\mathcal{M}_\mathcal{C}$ is the Zariski closure of this image.}
%The {}\emph{multiview variety} $\mathcal M_{\mathcal C}$ is the \emph{Zariski closure} of the set of point correspondences, which means that it is the smallest algebraic variety (i.e., vanishing set of a system of polynomial equations) that contains all point correspondences. 
In \cite{Heyden_Astrom}, Heyden and Åström call $\Phi_{\mathcal{C}}(\PP^3)$ the \textit{natural descriptor}, and show that it is not Zariski closed. However, they give a set of polynomial relations that vanish on  $\Phi_{\mathcal{C}}(\PP^3)$, which corresponds to a set of polynomial relations that vanishes also in its Zariski closure, i.e. $\mathcal{M}_\mathcal{C}$. Even if $\mathcal{M}_\mathcal{C}$ is bigger than the natural descriptor, Chevalley's theorem guarantees that the Zariski closure is equal to the Euclidean closure; see \cite[Theorem 4.19]{michalek2021invitation}. The benefit of passing to the Zariski closure is that polynomial expressions describing $\mathcal{M}_\mathcal{C}$ allow for the use of techniques from algebraic geometry.
 
The ideal of $\mathcal{M}_\mathcal{C}$ has been studied in several works since \cite{Heyden_Astrom}. For instance in \cite{agarwal2019ideals} and in \cite{aholt2013hilbert}, where the universal Gr\"obner basis of the ideal is found.
When it comes to geometric properties of $\mathcal{M}_\mathcal{C}$, in \cite{trager2015joint} it is shown that if the cameras are in general position, then $\mathcal{M}_\mathcal{C}$ is smooth; and in \cite{EDDegree_point} a formula for the Euclidean Distance Degree (ED Degree) was provided. All these results give a good understanding of the multiview variety~$\mathcal{M}_\mathcal{C}$.

{In contrast, the algebraic understanding of line correspondences is less extensive, especially for more than three views. Still, line correspondences are of great interest in practice because they appear in abundance in man-made scenarios and are less prone to error than point features in the process of detecting correspondences across images. Moreover, in some real data sets standard feature detection algorithms fail due to a lack of point correspondences, but succeed when line correspondences are taken into account \cite{fabbri2020trplp}. Works such as \cite{Micusik_Wildenauer} and \cite{BARTOLI2005416} explore 3D line reconstruction from the detection and matching of line features in images, to the triangulation and error correction. Specifically, given $n$ views, \cite{Micusik_Wildenauer} uses line segments in three views to build an initial 3D model of a scene, and then adds views successively to recover the scene photographed by the $n$ cameras. Their approach assumes calibrated cameras, and uses the end points of the line segments to check the consistency of the reconstruction when adding views successively, but due to the sensitivity of points to noise, this method can easily run into errors in the reconstruction. In \cite{BARTOLI2005416} they give different methods for line reconstruction where the triangulation and error correction proccesses are based on Plücker coordinates, and they make no assumption on the calibration of the cameras. The use of Plücker coordinates to parametrize lines, although very complete, can be computationally expensive due to the overparametrization of each line. Other examples of the use of lines in different computer vision settings can be found in references such as \cite{Quan,Reisner-Kollmann,line_congruences, Leung, fabbri2020trplp} to name a few. 

Motivated by this we present an algebraic and geometric study of line correspondences in $n$ views for pinhole cameras, with the hope that these results can be used to improve the line triangulation process by including more than 3 views, and providing a description of line correspondences that allows for a better error correction.}

We study the image of the map $\Upsilon_{\mathcal{C}}$ which sends a line~$L$ in $\PP^3$ to the $m$-tuple of lines in $\PP^2$ obtained as the images of~$L$ under the $m$ pinhole cameras of~$\mathcal C$. In symbols:
$\Upsilon_{\mathcal{C}}: \mathbb G \dashrightarrow  (\mathbb P^2 )^m,$
where $\mathbb{G}$ is the Grassmannian of lines in~$\mathbb{P}^3$, and lines in $\mathbb P^2$ are represented by their unique linear equations up to scaling, which gives points in $\mathbb P^2$. {To clarify, throughout this paper we identify $\PP^2$ with its dual $(\PP^2)^\vee$.} The map $\Upsilon_\mathcal{C}$ is defined everywhere except at lines which pass through at least one camera center.
The Zariski closure of~$\Upsilon_\mathcal{C}( \mathbb G)$, denoted by~$\mathcal{L}_\mathcal{C}$, is called the \emph{line multiview variety}, and as in the point case the Zariski and Euclidean closures of~$\Upsilon_\mathcal{C}( \mathbb G)$ are equal. {Our main contribution is to provide a complete set-theoretical description of the variety $\mathcal{L}_{\mathcal{C}}.$} Specifically, we show in Theorem \ref{thm: main1} that
\begin{equation}\label{LC_general_case}
\mathcal{L}_{\mathcal{C}}= \left\{(\ell_1,\ldots,\ell_m)\in (\mathbb P^2)^m \mid \operatorname{rank} \begin{bmatrix}C_1^T \ell_1 & \ldots& C_m^T\ell_m\end{bmatrix}\leq 2\right\},
\end{equation}
if and only if no four camera centers lie on a line, and in Theorem \ref{thm: main2} we explain what happens else. We wish to highlight that the line multiview variety for three general views ($m=3$) had been described in Kileel's PhD thesis \cite{Kileel_Thesis} as part of Theorem 3.10. In fact, this reference covers all possible configurations of points and lines with three cameras. The description provided in \cite{Kileel_Thesis} was a fundamental basis for us to build upon. Equations that are satisfied by three line correspondences have been previously studied in \cite[Section 15]{Hartley2004} and \cite[Section 7]{faugeras1995geometry}. Furthermore, the ideal of {the} critical locus (for which the line reconstruction fails) in the Grassmannian $\mathbb G $, has been computed for three cameras; see \cite{bertolini:hal-01829503}.

Along the description of $\mathcal{L}_\mathcal{C}$, we also show that if the cameras are in general position, then the line multiview variety is smooth as long as $m>3$. In the case $m=3$ there is generally one singular point. As a final contribution{,} we provide a formula for the multidegree of $\mathcal{L}_\mathcal{C}$, and explore its ED degree and sensitivity. 

This paper is structured as follows. In Section~1, we give an overview of the basic mathematical tools we use; experts can safely skip this. In Section 2, we define and describe the line multiview variety. In Section~3, we characterize the possible singularities of the line multiview variety. In Section~4, we compute the multidegree of $\mathcal{L}_\mathcal{C}$, and in Section~5 we give a lower bound for a few of its ED degrees. Finally, in Section 6, we compare errors in triangulation for points and lines from the perspective of numerical analysis.

\bigskip

\noindent\textbf{Acknowledgements} The authors would like to thank Fulvio Gesmundo and Chiara Meroni for helpful discussions and their help in proving Theorem \ref{thm_mdeg}, and Kathl\'en Kohn for simplying the arguments of our main theorem by pointing out a reference to Lemma \ref{le: nongeneric_schubert}, providing us with useful background information and initiating this project. Furthermore, we thank two anonymous referees whose comments greatly improved the paper. The research of Elima Shehu and Paul Breiding was funded by the Deutsche Forschungsgemeinschaft (DFG, German Research Foundation), Projektnummer 445466444. Felix Rydell and Angélica Torres were supported by the Knut and Alice Wallenberg Foundation within their WASP (Wallenberg AI, Autonomous Systems and Software Program) AI/Math initiative.
%%%%%%%%%%%%%%%%%%%%%%%%%%%%%%%%%%%%%%%%%%%%%%%%%%%%%%%%%%%%%%%%%%%%%%%%%%%%%%%%%%%%%%%%%%%%%%%%%%%%%%%%%%%%%%%%%%%%%%%%%%%%%%%%%%%%%%%%%%%%%%%%%%%%
\section{Preliminaries}
We recall some basic definitions and results from algebraic geometry that we will use in this paper. For completeness, we prove most results in this section. More details can be found in, e.g., \cite{Gathmann}
or \cite{JoeHarris}.

Elementwise complex conjugation of $x\in\mathbb C^{n+1}$ is denoted by $\overline{x}$. Points in $\mathbb C^{n+1}$ are usually understood as column vectors. For $x\in\mathbb C^{n+1}$ we denote by $x^T$ its transpose and by $x^*:=\overline{x}^T$ its conjugate transpose. The Hermitian norm is denoted $\Vert x \Vert := \sqrt{x^*x}$.

The \emph{complex projective space} of dimension $n$ is defined as the set~$\mathbb P^n := (\mathbb C^{n+1}\setminus \{0\})/\sim$ of equivalence classes in $\mathbb C^{n+1}\setminus \{0\}$ given by the relation $x\sim y \Leftrightarrow \exists \lambda \in\mathbb C: x=\lambda y.$
For a complex vector space $V$ we write $\mathbb P(V):=\mathbb P^{\dim(V)-1}$.
For $z=(z_0,z_1,\ldots,z_n)\in\mathbb C^{n+1}\setminus \{0\}$ we write its class as~$[z]=[z_0:z_1:\ldots:z_n]\in \mathbb P^n$, and the projection of $\mathbb C^{n+1}\setminus \{0\}$ onto $\mathbb P^n$ as $\pi:(\mathbb C^{n+1}\setminus \{0\})\to \mathbb P^n$. A subset
$L\subset \mathbb P^n$ is a \emph{$k$-flat}, if $\pi^{-1}(L)\cup\{0\}$
is a $k+1$-dimensional linear space in $\mathbb C^{n+1}$. A $1$-flat in $\mathbb P^n$ is called a \emph{line}, and a $2$-flat is called a \emph{plane}. 

The Hermitian norm on $\mathbb C^{n+1}$ induces a metric on $\mathbb P^n$ via \begin{equation}\label{def_metric_Pn}
d\left([u],[v]\right) := \min_{t\in\mathbb C} \frac{\Vert u-tv\Vert}{\Vert u\Vert}.
\end{equation}
In fact, $d([u],[v])=\sin \alpha$, where $\alpha\in [0,\pi]$ is the minimal angle between two lines in $[u]$ and $[v]$ when interpreted as two-dimensional real vector spaces; see \cite[Proposition 14.12 \& Lemma 14.13]{BC2013}.
For~$x=(x_1,\ldots,x_m), y=(y_1,\ldots,y_m)\in \PP^{n_1}\times \cdots\times \PP^{n_m}$ we set
\begin{equation}\label{def_metric_Pn_product}
d(x,y) := \sqrt{\sum_{i=1}^m d(x_i,y_i)^2}.
\end{equation}
This metric induces a topology on a product of projective spaces, which we call the \emph{Euclidean topology}. For a subset $U$ we denote by $\overline{U}^{\mathrm E}$ its closure in the Euclidean topology.

We denote the ring of complex polynomials in $n+1$ many variables by
$$R:=\mathbb C[x_0,\ldots,x_n].$$
It is a \emph{graded ring}
$R = \bigoplus_{d\geq 0} R_d,$
where $R_d$ denotes the space of homogeneous polynomials of degree~$d$ in $R$.
A subset $X\subseteq \mathbb P^n$ is called a \emph{(projective) algebraic variety}, if there exists a set of \emph{homogeneous} polynomials $f_1,\ldots,f_k\in R$ such that $X = \{x\in\mathbb P^n \mid f_1(x) = \cdots = f_k(x)=0\}$, that is, $X$ is the vanishing set of the~$f_i$. Notice that, in general, for $x\in\mathbb P^n$ and a polynomial $f$ the value $f(x)$ is not defined, but $x$ being a zero of a homogeneous polynomial is well-defined. Similarly, we say that $X\subset \mathbb P^{n_1}\times \cdots\times \mathbb P^{n_m}$
is an algebraic variety, if there exists a set of \emph{multi-homogeneous} polynomials (i.e., homogeneous in each set of variables corresponding to each $\mathbb P^{n_i}$), such that $X$ is their vanishing set. In particular, both $\mathbb P^{n}$ and $\mathbb P^{n_1}\times \cdots\times \mathbb P^{n_m}$ are algebraic varieties. The set of algebraic varieties is closed under intersections and finite unions, so they define the closed sets in a topology, called the \emph{Zariski topology}. The Zariski topology is coarser than the Euclidean topology.

Let $U\subseteq X$ be a subset of a projective algebraic variety $X$. We write
$$I(U)=\{f\in R \mid f(x) = 0\text{ for all } x\in U\}$$
for the homogeneous ideal of polynomials vanishing on $U$.
The \emph{Zariski-closure} $\overline{U}$ of $U$ is the closure of $U$ in the Zariski topology; that is, the smallest algebraic variety containing $U$. We have
\begin{equation}\label{Zariski_closure}
\overline{U} = \{x\in\mathbb P^n \mid f(x) = 0 \text{ for all } f \in I(U)\}.
\end{equation}
Indeed, if $f$ is any polynomial that vanishes on a variety $X$ containing $U$, then, $f$ must also vanish on~$U$, hence $f\in I(U)$. This shows that $\overline{U}\subseteq X$ and moreover that $\overline{U}$ is the variety defined as the zero set of $I(U)$. For an algebraic variety $X$ we call $I(X)$ its \textit{defining ideal} and we denote its \emph{coordinate ring}
$$R[X]: = R/I(X).$$

A variety $X$ is \textit{irreducible}, if and only if for every decomposition $X=Y\cup Z$ into varieties $Y$ and~$Z$ we must have either $X=Y$ or $X=Z$. This is equivalent to the ideal $I(X)$ being a prime ideal.
Following \cite[Definition 2.25]{Gathmann} the \emph{dimension} $m=\dim (X)$ of an irreducible algebraic variety $X$ is the length of a longest chain of irreducible subvarieties
\begin{equation}\label{def_dimension}
\emptyset \neq X_0 \subsetneq X_1 \subsetneq \cdots \subsetneq X_m = X.
\end{equation}
Equivalently, the $m=\dim(X)$ is
the \emph{Krull-dimension} of $R[X]$; i.e.,  the longest chain of prime ideals in $R[X]$ of the form $0= \mathfrak p_m \subsetneq \cdots \subsetneq \mathfrak p_0 \neq R[X]$; see \cite[Lemma 2.27]{Gathmann}. This definition corresponds locally to our intuitive understanding of dimension \cite[Chapter 10]{Gathmann}.

\begin{lemma}\label{lemma_X_equal_to_Y}
Let $X$ and $Y$ be irreducible varieties such that $X\subset Y$, and $\dim (X) = \dim (Y)$. Then, we have~$X=Y$. 
\end{lemma}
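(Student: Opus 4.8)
The plan is to argue by contradiction, unwinding the chain-theoretic definition of dimension recorded in \eqref{def_dimension}. Suppose, aiming for a contradiction, that $X \neq Y$. Since we are given $X \subset Y$, this means the inclusion is strict, i.e. $X \subsetneq Y$. The whole proof will then consist of producing a chain of irreducible subvarieties of $Y$ that is strictly longer than any chain for $X$, forcing $\dim(Y) > \dim(X)$.

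First I would apply the definition of $\dim(X)$. Writing $m := \dim(X)$, there exists a maximal chain of irreducible subvarieties realizing this dimension,
\[
\emptyset \neq X_0 \subsetneq X_1 \subsetneq \cdots \subsetneq X_m = X.
\]
Each $X_i$ is an irreducible subvariety of $X$, and hence also an irreducible subvariety of $Y$, precisely because $X \subset Y$.

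Next I would append $Y$ as a new top term. Because the inclusion $X \subsetneq Y$ is strict and $Y$ is irreducible, the extended sequence
\[
\emptyset \neq X_0 \subsetneq X_1 \subsetneq \cdots \subsetneq X_m = X \subsetneq Y
\]
is a valid chain of irreducible subvarieties of $Y$, now of length $m+1$. By the definition of dimension as the length of a longest such chain, this yields $\dim(Y) \geq m+1 = \dim(X) + 1$, which contradicts the hypothesis $\dim(X) = \dim(Y)$. Hence the assumption $X \neq Y$ is untenable, and we conclude $X = Y$.

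I do not expect a genuine obstacle here, since the statement is essentially a repackaging of the definition of dimension. The only point meriting a word of care is the claim that appending $Y$ truly lengthens the chain rather than collapsing a term: this is immediate because $X$ is irreducible and appears as the penultimate entry $X_m$, while the strictness $X \subsetneq Y$ guarantees $Y$ is a distinct irreducible term above it. (Equivalently, one could phrase the same argument on the algebraic side: $X \subset Y$ gives an inclusion of prime ideals $I(Y) \subseteq I(X)$, which is strict when $X \subsetneq Y$, and the Krull-dimension formulation of \eqref{def_dimension} then produces the strictly longer chain of primes; the geometric version above is the cleaner route.)
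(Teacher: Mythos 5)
Your proof is correct and follows essentially the same route as the paper's: both argue by contradiction, take a longest chain of irreducible subvarieties realizing $\dim(X)$, and append $Y$ (irreducible, strictly containing $X$) to obtain a chain of length $\dim(X)+1$ in $Y$, contradicting $\dim(X)=\dim(Y)$. The only cosmetic difference is that the paper phrases strictness by picking a point $x\in Y\setminus X$, while you invoke $X\subsetneq Y$ directly; the substance is identical.
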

\begin{proof}
We prove the assertion by contradiction. If $X\neq Y$, there is a point $x\in Y\setminus X$. By definition of the dimension of a variety, there is a chain of irreducible subvarieties
\[\emptyset \neq X_0 \subsetneq X_1 \subsetneq \cdots \subsetneq X_m = X,\]
where $m:=\dim (X) = \dim (Y)$. Given that $x\not\in X$, we also have the chain
\[\emptyset \neq X_0 \subsetneq X_1 \subsetneq \cdots \subsetneq X_m = X \subsetneq Y,\]
where $X_i$ for $i=0,\ldots ,n$, is irreducible, and $X$ is irreducible by hypothesis. This implies that $\dim (Y) \geq \dim (X)+1$, which contradicts that $\dim (X) = \dim (Y)$.
\end{proof}

\subsection{Regular and rational maps}
Let $U\subset X$ and $W\subset Y$ be subsets of algebraic varieties $X$ and $Y$. A map
$$\varphi: U\to W$$
is \emph{regular} if we can write $\varphi(x)=[\varphi_0(x):\varphi_1(x):\ldots:\varphi_m(x)]$ for polynomials $\varphi_0,\ldots,\varphi_m$. If we have regular maps $\varphi: X\to Y$ and $\psi: Y\to X$  between algebraic varieties such that $\psi \circ \varphi = \mathrm{Id}_X$ and $\varphi\circ \psi = \mathrm{Id}_Y$, we say that $X$ and $Y$ are \emph{isomorphic}. In particular, if $X$ and $Y$ are isomorphic, then~$x\in X$ is a smooth point of $X$, if and only if $\varphi(x)$ is a smooth point of $Y$.  
If $X$ is irreducible, and $\varphi: U \to Y$ is a regular map defined on a dense Zariski open set $U\subset X$, we say that it is a rational map from $X$ to $Y$, denoted by $\varphi: X\dashrightarrow Y.
$
A rational map $\varphi: X\dashrightarrow Y$ is \emph{dominant}, if $Y=\overline{\varphi(X)}$. {If $\varphi$ is a rational map, that is invertible on a dense open subset of $Y$, and if the inverse is again rational, we call $\varphi$ a \emph{birational map}.}

We mention as in \cite[Remark 2.16]{Gathmann} that if $X$ is an irreducible variety, then any non-empty open set is Zariski dense in $X$ and the intersection of two non-empty Zarski open sets is always non-empty. We now prove three lemmata.

\begin{lemma}\label{le: polyzar} Let $\varphi: X\dashrightarrow Y$ be a rational map of irreducible varieties and let $U\subset X$ be non-empty and Zariski open. Then,
$\overline{\varphi(U)}=\overline{\varphi(X)}.$
\end{lemma}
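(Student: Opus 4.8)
The plan is to show the two inclusions $\overline{\varphi(U)}\subseteq\overline{\varphi(X)}$ and $\overline{\varphi(X)}\subseteq\overline{\varphi(U)}$. The first is immediate: since $U\subseteq X$ we have $\varphi(U)\subseteq\varphi(X)$, and taking Zariski closures preserves inclusions, so $\overline{\varphi(U)}\subseteq\overline{\varphi(X)}$. The substance is in the reverse inclusion, and the natural route is through the domain of definition of $\varphi$ rather than through $X$ directly, since $\varphi$ is only a rational map.

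First I would fix notation for the domain. By definition $\varphi$ is regular on some non-empty Zariski open set $V\subseteq X$, and $\varphi(X)$ should be understood as $\varphi(V)$ (the image of the locus where $\varphi$ is actually defined). The key observation is that $W:=U\cap V$ is an intersection of two non-empty Zariski open subsets of the \emph{irreducible} variety $X$, and by the remark quoted from \cite[Remark 2.16]{Gathmann} such an intersection is again non-empty and Zariski dense in $X$. I would then argue that $W$ is dense inside $V$: indeed $W=U\cap V$ is open in $V$ and non-empty, and $V$ is itself irreducible (a non-empty open subset of an irreducible variety is irreducible), so $W$ is dense in $V$, i.e. $\overline{W}^{\,V}=V$ where the closure is taken inside $V$. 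Taking closures in the ambient space this gives $\overline{W}=\overline{V}=\overline{\varphi(X)}$'s source; more usefully, I want to transport this density through $\varphi$.

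The main step is to push density forward along $\varphi$. Here I would use that the preimage under a continuous map of a closed set is closed: the map $\varphi\colon V\to Y$ is regular, hence continuous in the Zariski topology. Consider the closed set $Z:=\overline{\varphi(W)}\subseteq Y$. Its preimage $\varphi^{-1}(Z)$ is a Zariski-closed subset of $V$ containing $W$; since $W$ is dense in $V$ we get $\varphi^{-1}(Z)=V$, so $\varphi(V)\subseteq Z=\overline{\varphi(W)}$. Taking closures yields $\overline{\varphi(X)}=\overline{\varphi(V)}\subseteq\overline{\varphi(W)}\subseteq\overline{\varphi(U)}$, where the last inclusion is because $W\subseteq U$. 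Combined with the trivial inclusion this proves $\overline{\varphi(U)}=\overline{\varphi(X)}$.

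I expect the main obstacle to be bookkeeping about \emph{which} set one closes with respect to and ensuring the continuity/preimage argument is applied on the correct domain: $\varphi$ is not defined on all of $X$, so one must carry out the density-and-preimage argument on the common domain $V$ (or on $W$) rather than on $X$, and one must invoke irreducibility of $X$ precisely to guarantee that $U\cap V$ is non-empty and dense. A cleaner alternative, which avoids explicit preimages, is to note that a regular (hence continuous) map sends a dense subset to a subset whose closure contains the image of the whole domain; formally, $\overline{\varphi(\overline{W})}\subseteq\overline{\varphi(W)}$ fails in general, so the continuity-of-preimages formulation above is the safer one to write out in full.
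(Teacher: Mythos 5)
Your proof is correct, but it takes a genuinely different route from the paper's. The paper argues algebraically: after reducing to $U\subseteq V$ (where $V$ is the locus of definition), it picks an arbitrary $f\in I(\varphi(U))$, observes that $f\circ\varphi$ vanishes on $U$ and hence lies in $I(U)=I(X)$ (this is where density of the non-empty open $U$ in the irreducible $X$ enters, via the characterization of Zariski closure in Equation (\ref{Zariski_closure})), and concludes that $f$ vanishes at every $\varphi(x)$ with $x\in V$, so $\varphi(V)\subseteq\overline{\varphi(U)}$. You instead argue purely topologically: $W=U\cap V$ is non-empty and dense in $V$ by irreducibility, $\varphi$ is continuous on $V$ in the Zariski topology, and the preimage of the closed set $\overline{\varphi(W)}$ is closed in $V$ and contains the dense subset $W$, hence equals $V$, giving $\varphi(V)\subseteq\overline{\varphi(W)}\subseteq\overline{\varphi(U)}$. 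Both proofs hinge on the same density fact; yours trades the ideal bookkeeping (and the implicit check that $f\circ\varphi$ is again a polynomial) for the general principle that a continuous map carries a dense subset of its domain into a dense subset of the closure of its image, so your argument works verbatim for any continuous map out of an irreducible topological space, while the paper's stays inside the vanishing-ideal formalism it has already set up. Two small blemishes, neither affecting correctness: the clause ending ``$\overline{\varphi(X)}$'s source'' is garbled and should simply be deleted (what is true and all you need is $\overline{W}^{\,V}=V$); and your closing caveat is slightly miscalibrated, since for a continuous map the inclusion $\varphi\bigl(\overline{W}^{\,V}\bigr)\subseteq\overline{\varphi(W)}$ does hold whenever the closure is taken inside the domain $V$ --- the failure you are guarding against only occurs if one closes $W$ in $X$, where $\varphi$ is not everywhere defined, and your preimage formulation on $V$ sidesteps this correctly.
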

\begin{proof}
Let $V\subset X$ be the open set where $\varphi$ is defined. Since both $U$ and $V$ are non-empty Zariski open in $X$, their intersection $U\cap V$ is as well. Therefore, we can without restriction assume $U\subset V$.

It is clear that $\varphi(U)\subseteq \varphi(X)$, which shows $\overline{\varphi(U)}\subseteq \overline{\varphi(X)}$.
For the other inclusion, it is enough to show $\varphi(V)\subseteq \overline{\varphi(U)}$. Let $x\in V$ and $f\in I(\varphi(U))$. By definition, $f$ is such that $(f\circ \varphi)(u)=0$ for every $u\in U$. Hence, $f\circ \varphi\in I(U)$. Since $I(U)=I(X)$ by Equation (\ref{Zariski_closure}) this shows that $(f\circ \varphi)(x)=0$. Finally, since $I(\varphi(U))=I(\overline{\varphi(U)})$ and since $f$ was arbitrary we have~$\varphi(x)\in \overline{\varphi(U)}$.
\end{proof}

\begin{lemma}\label{image_is_irreducible}
Let $X$ be an irreducible algebraic variety, $Y$ be any variety, and
$\varphi: X\dashrightarrow  Y$
 a rational map. Then, the Zariski closure $\overline{\varphi(X)}$ is an irreducible variety.
\end{lemma}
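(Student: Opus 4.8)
The plan is to prove that $\overline{\varphi(X)}$ is irreducible by showing that its homogeneous ideal is prime, mirroring the composition-with-$\varphi$ technique already used in the proof of Lemma~\ref{le: polyzar}. Write $U\subset X$ for the dense Zariski-open set on which the regular representative $\varphi=[\varphi_0:\cdots:\varphi_m]$ is defined. By Lemma~\ref{le: polyzar} we have $\overline{\varphi(X)}=\overline{\varphi(U)}$, so it suffices to show that $I(\varphi(U))=I(\overline{\varphi(U)})$ is prime. Since $U$ is a non-empty open subset of the irreducible variety $X$, Equation~(\ref{Zariski_closure}) gives $I(U)=I(X)$, and $I(X)$ is prime precisely because $X$ is irreducible. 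These two facts are the only inputs beyond the earlier lemma.

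Next I would take two homogeneous polynomials $f,g$ with $fg\in I(\varphi(U))$ and form the compositions $f\circ\varphi=f(\varphi_0,\ldots,\varphi_m)$ and $g\circ\varphi=g(\varphi_0,\ldots,\varphi_m)$. The statement that $fg$ vanishes on $\varphi(U)$ translates, exactly as in Lemma~\ref{le: polyzar}, into the statement that $(f\circ\varphi)(g\circ\varphi)$ vanishes on $U$, i.e.\ $(f\circ\varphi)(g\circ\varphi)\in I(U)=I(X)$. Primeness of $I(X)$ then forces one factor, say $f\circ\varphi$, to lie in $I(X)=I(U)$; unwinding the definition, this says $f$ vanishes on $\varphi(U)$, hence $f\in I(\varphi(U))$. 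Therefore $I(\varphi(U))$ is prime, which means $\overline{\varphi(U)}=\overline{\varphi(X)}$ is irreducible, as desired.

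The only delicate point is the bookkeeping around the composition $f\circ\varphi$: one must check that it really is a polynomial on $X$, which holds because $\varphi$ is given by polynomials in homogeneous coordinates, so substituting the $\varphi_i$ into a homogeneous $f$ yields a homogeneous polynomial in the coordinates of $X$; when $Y$ sits inside a product of projective spaces and $f$ is multihomogeneous, the same substitution still produces a polynomial and the vanishing bookkeeping is unchanged. I expect this translation between ``vanishing on $\varphi(U)$'' and ``composition lying in $I(U)$'' to be the main thing to get right, rather than any deep fact; it is the identical mechanism that powers Lemma~\ref{le: polyzar}, so no genuinely new idea is needed. (One could alternatively argue topologically, since $U$ is irreducible, a regular map is Zariski-continuous, a continuous image of an irreducible set is irreducible, and the closure of an irreducible set is irreducible; but the ideal-theoretic route is self-contained given what has already been established.)
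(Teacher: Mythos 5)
Your proof is correct and takes essentially the same approach as the paper: both arguments show that $I(\varphi(U))$ is a prime ideal by pulling a product $fg \in I(\varphi(U))$ back along $\varphi$, using $I(U)=I(X)$ (valid since $U$ is dense in the irreducible variety $X$) and the primeness of $I(X)$. The only cosmetic difference is your explicit appeal to Lemma~\ref{le: polyzar}, where the paper simply defines $Z:=\varphi(X)$ as the image of the locus of definition and works with $I(Z)$ directly.
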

\begin{proof}
Denote $Z:=\varphi(X)$. By (\ref{Zariski_closure}) we have $I(Z)=I(\overline{Z})$.
Let $f$ and $g$ be polynomials such that $fg\in I(Z)$. We show that either $f\in I(Z)$ or $g\in I(Z)$, which implies that $I(Z)=I(\overline{Z})$ is a prime ideal, hence $\overline Z$ is irreducible.
Let $U\subset X$ be open and dense, such that $\varphi:U\to Z\subset Y$ is a regular map and $\varphi(U)=Z$. Then, $h=(f\circ \varphi)\cdot(g\circ \varphi)$ vanishes on $U$; i.e., $h\in I(U)$.  By~(\ref{Zariski_closure}),  $I(U)=I(X)$. Since $X$ is irreducible, $I(X)$ is prime so we must have either $(f\circ \varphi)\in I(X)$ or~$(g\circ \varphi)\in I(X)$. This implies that either $f\in I(Z)$ or $g\in I(Z)$.
\end{proof}

%%%%%%%%%%%%%%%%%%%%%%%%%%%%%%%%%%%%%%%%%%%%%%%%%%%%%%%%%%%%%%%%%%%%%%%%%%%%%%%%%%%%%%%%%%%%%%%%%%%%%%%%%%

\begin{lemma}\label{rational_dimension}
Let $X$ be an irreducible variety and $\varphi: X\dashrightarrow Y$ a dominant rational map. Then, $\dim X \geq \dim Y$.
\end{lemma}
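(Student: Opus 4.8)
The plan is to reduce the statement to the monotonicity of transcendence degree under a field inclusion, using the standard identification of the dimension of an irreducible variety with the transcendence degree over $\mathbb{C}$ of its field of rational functions $\mathbb{C}(X)$ (see \cite{Gathmann}). Concretely, I would first record that for an irreducible variety $X$ one has $\dim X = \operatorname{trdeg}_{\mathbb{C}} \mathbb{C}(X)$; the equivalence with the chain definition in \eqref{def_dimension} is exactly the passage between Krull dimension and transcendence degree for finitely generated domains. Since $\overline{\varphi(X)} = Y$ is irreducible by Lemma \ref{image_is_irreducible}, the function field $\mathbb{C}(Y)$ is a genuine field, so this identity applies to both $X$ and $Y$.

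The key step is then to produce an injection of fields $\varphi^*\colon \mathbb{C}(Y) \hookrightarrow \mathbb{C}(X)$. Let $U \subset X$ be a nonempty Zariski-open set on which $\varphi$ is regular, and set $\varphi^*(g) = g \circ \varphi$ for a rational function $g$ on $Y$; I would check this is a well-defined $\mathbb{C}$-algebra homomorphism after restricting to the dense open locus where both $\varphi$ and $g$ are regular. Injectivity is where dominance enters: if $g\circ\varphi$ vanishes on a dense open subset of $U$, then $g$ vanishes on $\varphi(U)$, and since $\varphi$ is dominant, $\overline{\varphi(U)} = \overline{\varphi(X)} = Y$ by Lemma \ref{le: polyzar}, so $g$ vanishes on a dense subset of the irreducible variety $Y$ and is therefore the zero element of $\mathbb{C}(Y)$. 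This is the exact function-field analogue of the computation in the proof of Lemma \ref{le: polyzar}.

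Finally, an injective homomorphism of fields over $\mathbb{C}$ cannot decrease transcendence degree: the image of a transcendence basis of $\mathbb{C}(Y)$ consists of algebraically independent elements of $\mathbb{C}(X)$, whence $\operatorname{trdeg}_{\mathbb{C}} \mathbb{C}(Y) \le \operatorname{trdeg}_{\mathbb{C}} \mathbb{C}(X)$. Combined with the dimension identity this gives $\dim Y \le \dim X$, as claimed.

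I expect the main obstacle to be bookkeeping rather than anything deep: setting up $\mathbb{C}(Y)$ and $\varphi^*$ carefully enough that $\varphi^*$ is genuinely defined on the whole function field, which forces one to restrict to a common dense open domain of regularity. It is worth noting why a more naive chain-pullback argument fails, and hence why the field-theoretic route is the right one: although $\varphi(U)$ is dense in $Y$, it need not meet a lower-dimensional irreducible subvariety $Y_i \subsetneq Y$, so the preimages $\overline{\varphi^{-1}(Y_i)\cap U}$ may collapse and do not produce a chain of the required length. If one prefers to avoid function fields, an alternative is to pass to the graph $\Gamma_\varphi \subset X \times Y$, which is isomorphic to $U$ and hence irreducible of dimension $\dim X$; its second projection extends to a dominant, and in the projective setting surjective, morphism $\overline{\Gamma_\varphi} \to Y$, and the inequality then follows from the fiber-dimension theorem. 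The transcendence-degree route is cleaner, and I would take it.
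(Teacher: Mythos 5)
Your proof is correct, and it is a genuine (if close) variant of the paper's argument: both proofs hinge on the same two moves --- construct a pull-back $\varphi^*(g)=g\circ\varphi$ and use dominance, exactly as in Lemma \ref{le: polyzar}, to see that no nonzero element dies --- but they apply these moves to different rings. The paper pulls back on coordinate rings, obtaining an embedding $R[Y]\hookrightarrow R[X]$, and finishes by monotonicity of Krull dimension under this embedding; you pull back on function fields, obtaining $\mathbb{C}(Y)\hookrightarrow \mathbb{C}(X)$, and finish by monotonicity of transcendence degree together with $\dim X=\operatorname{trdeg}_{\mathbb{C}}\mathbb{C}(X)$. What your route buys is that the final step is self-justifying: a transcendence basis maps to an algebraically independent set, full stop. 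The paper's final step is more delicate than its one-line statement suggests, since an embedding of arbitrary rings does not bound Krull dimension (e.g.\ $\mathbb{C}[x]\hookrightarrow\mathbb{C}(x)$); one must invoke that for finitely generated $\mathbb{C}$-domains Krull dimension is controlled by the transcendence degree of the fraction field --- precisely the fact you use directly --- and in the multigraded setting actually relevant here (subvarieties of $(\mathbb{P}^2)^m$), where the coordinate ring's Krull dimension exceeds $\dim Y$ by the number of projective factors and homogeneous pieces of distinct multidegrees can cancel after pull-back, the coordinate-ring bookkeeping needs real care that the paper elides. What you pay for this robustness is the well-definedness bookkeeping you correctly flagged: one must restrict to a dense open set where both $\varphi$ and a given $g$ are regular, and dominance guarantees $\varphi(U)$ meets the regularity locus of $g$. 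One small refinement: in the field setting, injectivity of $\varphi^*$ is automatic once the map is well defined (a unital ring homomorphism out of a field has trivial kernel), so your dominance argument is really the well-definedness check --- for instance, it guarantees $\varphi^*(g)\neq 0$, so that $\varphi^*(1/g)$ makes sense. Your two side remarks are also sound: the graph construction $\overline{\Gamma_\varphi}\to Y$ with the fiber-dimension theorem gives an independent proof, and you are right that naively pulling back a maximal chain of irreducible subvarieties can collapse, which is why neither you nor the paper argues from the chain definition in \eqref{def_dimension} directly.
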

\begin{proof}
By Lemma \ref{image_is_irreducible}, $Y$ is irreducible. Let $R[X]$ and $R[Y]$ be the coordinate rings of $X$ and $Y$, respectively.
We have a ring homomorphism $\varphi^*: R[Y]\to R[X]$, called \emph{pull-back} morphism, defined by~$\varphi^*(f):=f\circ \varphi$.
We show that $\varphi^*: R[Y]\to R[X]$ is injective. Let $f\in R[Y]$ with $f\neq 0$. Then, $f$ defines a non-zero function $Y\to\mathbb C$. Since $\varphi$ is dominant, there exists $x\in X$ with $f(\varphi(x))\neq 0$. Therefore, $\varphi^*(f)=f\circ\varphi\neq 0$.
Hence,~$\varphi^*$ defines an embedding $R[Y]\hookrightarrow R[X]$, which implies the Krull-dimension of $R[Y]$ is less or equal than the Krull-dimension of $R[X]$. This shows  $\dim X\geq \dim Y$.
\end{proof}

If $f_1,\ldots,f_k\in R$ are polynomials, we write $\langle f_1,\ldots,f_k\rangle:=\{\sum_{i=1}^kg_if_i\mid g_i\in R\}$ for the ideal generated by the $f_i$. Let $X$ be a variety and suppose that $I(X)=\langle f_1,\ldots,f_k\rangle$. We say that a point $a\in X$ is a \emph{smooth point}, if the rank of the \emph{Jacobian matrix} $J(x):=\big[\tfrac{\partial f_i}{\partial x_j}(a)\big]_{1\leq i\leq k, 0\leq j\leq n}$ is equal to the codimension of $X$. This definition is independent of the choice of generators for the ideal~$I(X)$ \cite[Chapter 10]{Gathmann}. {In our case, however, we only have a set-theoretic description of the ideal of the line-multiview variety. For proving smoothness we use van der Waerden's purity theorem.
\begin{theorem}[Theorem 2.22 of \cite{2002algebraic}]\label{purity_thm} Let $\varphi:X\to Y$ be a birational map between projective complex varieties that is defined on all of $X$. 
Let $\mathcal W:= \{V\subset Y\mid \text{$V$ is open and }\varphi:\varphi^{-1}(V)\to V \text{ is an isomorphism}\}$. 
If $Y$ is smooth, then the union 
$$W:=\bigcup\limits_{V\in\mathcal W} V\subseteq Y$$ 
has the property that $X\setminus \varphi^{-1}(W)$ is either empty or of codimension 1. 
\end{theorem}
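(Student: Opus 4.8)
This is van der Waerden's classical purity theorem, so my plan is to carry out the standard reduction to local algebra rather than to attempt anything new. Since a union of opens on which $\varphi$ is an isomorphism is again such an open, $\varphi$ restricts to an isomorphism $\varphi^{-1}(W)\to W$ and $W$ is the largest open with this property. Birationality gives a dense open $V_0\subseteq Y$ over which $\varphi$ is already an isomorphism, so $V_0\in\mathcal W$ and $W\supseteq V_0$ is dense; as $X$ is irreducible, $E:=X\setminus\varphi^{-1}(W)=\varphi^{-1}(Y\setminus W)$ is a proper Zariski-closed subset, hence either empty or of codimension $\geq 1$. The real content is \emph{purity}: I must rule out components of $E$ of codimension $\geq 2$. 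Here a point $x$ lies in $E$ exactly when $\varphi$ is not a local isomorphism at $x$, equivalently when the induced local inclusion $A:=\mathcal O_{Y,\varphi(x)}\subseteq\mathcal O_{X,x}=:B$ of subrings of the common function field $K=\CC(X)=\CC(Y)$ is strict.

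I would then localize at the generic point $x$ of a component $E_0$ of $E$, so that $\dim B=\operatorname{codim}_X E_0$ is the number I must show equals $1$; write $y=\varphi(x)$ and $A=\mathcal O_{Y,y}\subseteq B=\mathcal O_{X,x}$. Two inputs drive the argument. First, smoothness of $Y$ makes $A$ a regular local ring, hence a unique factorization domain (so its height-one primes are principal) and integrally closed. Second, because $E_0$ is a whole component of the closed set $E$, every proper generization $x'$ of $x$ lies outside $E$, so $\varphi$ is a local isomorphism there; in ring terms $B_{\mathfrak q}=A_{\mathfrak q\cap A}$ for every non-maximal prime $\mathfrak q\subset B$. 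Suppose, for contradiction, $\dim B\geq 2$. Assuming $X$ normal (I reduce to this below), $B$ is a normal Noetherian domain and therefore equals the intersection of its localizations at height-one primes, $B=\bigcap_{\operatorname{ht}\mathfrak q=1}B_{\mathfrak q}$; since $\dim B\geq 2$ every height-one $\mathfrak q$ is non-maximal, whence $B=\bigcap_{\operatorname{ht}\mathfrak q=1}A_{\mathfrak q\cap A}$.

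The decisive step is to show that the primes $\mathfrak q\cap A$ exhaust all height-one primes of $A$. Given a height-one prime $\mathfrak p=(t)$ of $A$, principal by factoriality, the element $t$ is a nonzero nonunit of the domain $B$, so by Krull's principal ideal theorem there is a height-one prime $\mathfrak q\subset B$ minimal over $tB$; since $\dim B\geq 2$ this $\mathfrak q$ is non-maximal, giving $A_{\mathfrak q\cap A}=B_{\mathfrak q}$, a discrete valuation ring, so $\mathfrak q\cap A$ has height one, and as it contains $\mathfrak p$ it equals $\mathfrak p$. Thus every height-one prime of $A$ occurs, and $B=\bigcap_{\operatorname{ht}\mathfrak p=1}A_{\mathfrak p}=A$, contradicting the strictness $A\subsetneq B$; hence $\dim B=1$ and every component of $E$ has codimension $1$. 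The step I expect to be the genuine obstacle is the passage to non-normal $X$: there the non-normal locus is automatically contained in $E$ (a local isomorphism onto smooth $Y$ would make $X$ normal at that point), so one must pass to the normalization $\tilde X\to X$, which is finite and birational with $\tilde X\to Y$ again a birational morphism to the smooth $Y$, apply the normal case upstairs, and then transport the codimension statement back down, tracking how finiteness of the normalization preserves codimensions. Factoriality of $\mathcal O_{Y,y}$, which comes precisely from the smoothness hypothesis on $Y$, is the input used decisively throughout.
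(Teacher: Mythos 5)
The paper contains no proof of this statement: it is imported wholesale by citation to \cite{2002algebraic}, so there is no internal argument to compare yours against. What you give is the classical Zariski--van der Waerden proof via local rings inside the common function field, and its core is correct and complete: with $A=\mathcal O_{Y,y}$ regular, hence factorial and normal, and $B=\mathcal O_{X,x}$ normal local of dimension $\geq 2$ at the generic point of a component of the exceptional set $E$, you combine $B=\bigcap_{\mathrm{ht}\,\mathfrak q=1}B_{\mathfrak q}$, the observation that height-one primes of $B$ are non-maximal (so $B_{\mathfrak q}=A_{\mathfrak q\cap A}$, since proper generizations of $x$ leave $E$ by maximality of the component), and the principal ideal theorem applied to a generator $t$ of a height-one prime of $A$ (note $t\in\mathfrak m_B$ because the inclusion $A\subseteq B$ is local) to show every height-one prime of $A$ is a contraction; this forces $B\subseteq\bigcap_{\mathrm{ht}\,\mathfrak p=1}A_{\mathfrak p}=A$, the desired contradiction. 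You also correctly isolate where smoothness of $Y$ enters, namely factoriality of $\mathcal O_{Y,y}$ --- exactly the hypothesis whose failure permits small resolutions of normal, non-factorial threefolds with codimension-two exceptional locus.

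Two supporting steps are asserted rather than proved, and both genuinely carry content. First, the identification of $E=X\setminus\varphi^{-1}(W)$ with the locus where $\mathcal O_{Y,\varphi(x)}\subsetneq\mathcal O_{X,x}$ is not formal in the direction you actually use: equality of local rings only gives an open immersion near $x$, and to conclude $x\in\varphi^{-1}(W)$ one must know the fiber through $x$ is a singleton and then shrink using properness. This is Zariski's main theorem together with connectedness of fibers for the proper birational $\varphi$ onto the normal (here smooth) $Y$; the same input is what legitimizes your claim that some dense $V_0$ lies in $\mathcal W$ (an isomorphism over a dense open does not a priori involve the \emph{full} preimage --- one must remove the closed image of the complement). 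Second, the normalization descent you flag as the obstacle does close along the lines you sketch, and it is worth recording how: with $\nu:\tilde X\to X$ the normalization and $\tilde E$ the exceptional set of $\varphi\circ\nu$, one checks $\tilde E=\nu^{-1}(E)$ --- if $\varphi^{-1}(V)\to V$ is an isomorphism with $V$ smooth, then $\varphi^{-1}(V)$ is normal and $\nu$ is an isomorphism over it, giving one inclusion; the sandwich $\mathcal O_{Y,y}\subseteq\mathcal O_{X,x}\subseteq\mathcal O_{\tilde X,\tilde x}$ gives the other --- and since $\nu$ is finite surjective it preserves dimensions, so $E=\nu(\tilde E)$ is a finite union of irreducible closed sets of codimension one, forcing every component of $E$ to have codimension one. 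Neither of these steps would fail; both are standard, but as written they are the places where your proof leans on unproved facts rather than on the computation you carried out.
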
}

%%%%%%%%%%%%%%%%%%%%%%%%%%%%%%%%%%%%%%%%%%%%%%%%%%%%%%%%%%%%%%%%%%%%%%%%%%%%%%%%%%%%%%%%%%%%%%%%%%%%%%%%%%%%%%%%%%%%%%%%%%%%%%%%%%%%%%%%%%%%%%%%%%%%%%%%%%%%%%%

\subsection{The Grassmannian of lines in $\mathbb P^3$}\label{sec: Grassmanians} A particularly important variety for our study is the Grassmannian of lines in $\mathbb P^3$, defined as
$$\mathbb G: = \left\{L\subset \mathbb P^3 \mid L \text{ is a line}\right\}.$$
Every 
element in $\mathbb{G}$ has the form $L = \left\{[x_0s + y_0t: x_1s+y_1t: x_2s+y_2t:x_3s+y_3t] \mid s,t\in\mathbb C\right\}\subset \mathbb P^3$,
where $x=[x_0:x_1:x_2:x_3], y=[y_0:y_1:y_2:y_3]\in\mathbb P^3$ are two distinct fixed points; i.e., $L$ is the line through $x$ and~$y$, and we denote it $L(x,y)$.

The Grassmannian can be given the structure of an algebraic variety through the \emph{Plücker embedding}, which identifies $\mathbb{G}$ with the \textit{Plücker variety} in $\mathbb P ^5$. The Plücker embedding is constructed as follows: For $x,y\in\mathbb P^3$ denote $x\wedge y:=xy^T - yx^T\in \mathbb P(\mathbb C^{4\times 4})$, and define the map 
\begin{equation}\label{def_pluecker}\rho:\mathbb G\to \mathbb P(\mathbb C^{4\times 4}), \; L(x,y) \mapsto x\wedge y.
\end{equation}
To check that this map is well-defined suppose that $L(x,y)=L(u,v)$. Let $0\neq\hat x\in\pi^{-1}(x)$ and similarly define $\hat y, \hat u, \hat v$. Since $L(x,y)=L(u,v)$, $\hat x$ and $\hat y$ span the same two-dimensional vector space as $\hat u$ and $\hat v$. This means that $\hat{u}=\alpha_1 \hat{x}+ \beta_1 \hat{y}$ and $\hat{v}=\alpha_2 \hat{x}+ \beta_2 \hat{y}$ for some scalars $\alpha_1, \alpha_2, \beta_1$ and $\beta_2$ with $\alpha_1\beta_2 - \beta_1\alpha_2\neq 0$, and 
\begin{equation}
    \begin{split}
        \hat u \wedge \hat v = & (\alpha_1 \hat{x}+ \beta_1 \hat{y}) \wedge (\alpha_2 \hat{x}+ \beta_2 \hat{y}) \\
        =& (\alpha_1\beta_2 - \beta_1\alpha_2) \, \hat x \wedge \hat{y}.
    \end{split}
\end{equation}
Projectively we have that $\rho(L(u,v))=u\wedge v= x\wedge y= \rho(L(x,y))$, and $\rho$ is well defined. Additionally, $\rho$ is injective: The column (or row) span of $x\wedge y$ is equal to $L(x,y)$, so if $x\wedge y=u\wedge v$, then $u,v$ span the same line as $x,y$. 

The Plücker embedding gives a bijection between $\mathbb G$ and $\rho(\mathbb G)$, and the latter is the algebraic variety of rank-2 skew-symmetric matrices in $\mathbb{P}(\mathbb C^{4\times 4})$, called the \emph{Plücker variety}. Its defining ideal is 
\begin{equation}\label{pluecker_ideal}
\text{radical ideal of } \Bigg\langle 3\times 3\text{-minors of } \begin{bmatrix}
0 & -p_0 & -p_1 & -p_2\\
p_0 & 0 & -p_3 & -p_4\\
p_1 & p_3 & 0 & -p_5\\
p_2 & p_4 & p_5 & 0
\end{bmatrix} \Bigg\rangle = \langle  p_0p_5-p_1p_4+p_2p_3 \rangle,
\end{equation}
where the equality of the right can be checked, for instance, using \texttt{Macaulay2} \cite{M2}. Therefore, we can interpret the Pl\"ucker variety as a hypersurface in $\mathbb P^5$. The homogeneous coordinates in $\mathbb P ^5$ of each element of~$\mathbb G$ are called \emph{Plücker coordinates}~\cite{JoeHarris}.
In particular, (\ref{pluecker_ideal}) shows that the Grassmanian~$\mathbb G$ is an irreducible hypersurface in $\mathbb P^5$; that is, an algebraic variety of dimension
\begin{equation}\label{dimension_G}
\dim \mathbb G = 4.
\end{equation}
The Zariski open subset of $\mathbb G$, where the Pl\"ucker coordinate $p_1$ is not equal to zero is parametrized by 
\begin{equation}\label{def_tau}\tau:\mathbb C^{2\times 2} \to \mathbb G, \quad \begin{bmatrix} v_{11}&v_{12}\\
v_{21}&v_{22}\end{bmatrix} \mapsto \operatorname{rowspan}\begin{bmatrix} 1&0&v_{11}&v_{12}\\
0&1&v_{21}&v_{22}\end{bmatrix}.
\end{equation}
%%%%%%%%%%%%%%%%%%%%%%%%%%%%%%%%%%%%%%%%%%%%%%%%%%%%%%%%%%%%%%%%%%%%%%%%%%%%%%%%%%%

We define a metric on the Grassmannian as follows. For $L\in \mathbb G$ let $\Pi_L$ denote the orthogonal projection (relative to the Hermitian inner product) onto the two-dimensional linear space $\hat L\subset \mathbb C^4$. The distance between two lines $L, K \in \mathbb G$ is
\begin{equation}\label{def_distance}
\mathrm{dist}(L,K) := \max_{v\in\mathbb C^4\setminus\{0\}: \Vert v\Vert = 1} \Vert \Pi_L(v) - \Pi_K(v)\Vert.
\end{equation}
This distance function induces a topology on $\mathbb G$, which we call the \emph{Euclidean topology}, to distinguish it from the Zariski topology. In the following, when we say that sequences in the Grassmannian converge, we mean convergence with respect to the Euclidean topology. The topology induced by the Euclidean topology in $\PP^5$ gives the same topology on $\mathbb G$.  One interesting property of this metric is unitary invariance. For every unitary matrix $U\in \mathrm{U}(4)$  we have $\mathrm{dist}(L,K) = \mathrm{dist}(U\cdot L, U\cdot K)$. This means that the distance between two lines only depends on their relative position in the ambient space~$\PP^3$.

We also consider the \emph{real Grassmanian} defined by
$$\mathbb G_{\mathbb R}:=\left\{L\in\mathbb G \mid \overline{L}=L\right\},$$
where $\overline{L}$ is the complex conjugate of $L$. The real Grassmannian consists of precisely those lines spanned by real points. Indeed, if $L$ is spanned by real points, it is clearly invariant under conjugation. {On the other hand, if $L$ is invariant under complex conjugation, then suppose that it is spanned by the two points $[a_1],[a_2]\in\PP^3$. It is also spanned by $[\overline{a_1}],[\overline{a_2}]$. We claim that $L$ is spanned by two of the real vectors $a_1+\overline{a_1},a_1-\overline{a_1},a_2+\overline{a_2}$ and $a_2-\overline{a_2}$. These vectors are all contained in $L$ by assumption. One can check that if all of these four vectors were parallel, then so would $a_1$ and $a_2$ be, which is a contradiction.} 

The Zariski closure of $\mathbb R^{2\times 2}$ is $\mathbb C^{2\times 2}$ and so by Lemma~\ref{le: polyzar} we have $\overline{\tau(\mathbb R^{2\times 2})} = \mathbb G.$ Moreover, $\tau(\mathbb R^{2\times 2}) \subseteq \mathbb G_{\mathbb R}$. Together this implies
\begin{equation}\label{Zariski closure real Grassmannian}
\overline{\mathbb G_{\mathbb R}} = \mathbb G.
\end{equation}

We can identify $\CC^4$ with its 
dual space $(\CC^4)^*:=\{f:\CC^4\to \CC\mid f \text{ is linear}\}$ either by using the Hermitian inner product $(x,y)\mapsto x^*y$ or with the Euclidean bilinear form $(x,y)\mapsto x^Ty$. Both options define a notion of \emph{dual line} in $\PP^3$. The dual lines of a line $L\in \mathbb G$ are denoted
\begin{align}\label{def_dual_lines}
L^\perp &:=\{p\in \PP^3 \mid p^Tq=0 \text{ for all } q\in L\}, \quad\text{and}\\
L^* &:=\{p\in \PP^3 \mid p^*q=0 \text{ for all } q\in L\}.\nonumber
\end{align}
Notice that $L^* = \overline{L}^\perp$ and that in the real Grassmannian these two definitions coincide. 

We say that a line $L$ is a \emph{transversal} of another line in $\PP^3$, if $L$ intersects this line. We call the lines intersecting $L$ its transversals.
A fixed line in $\PP^3$ defines the following irreducible subvariety, called the \emph{Schubert variety} of transversals of $L$:
$$
 \Omega(L) = \{W\in \mathbb G \mid W\cap L\neq \emptyset \};
 $$
see \cite[Section 3.3]{eisenbud-harris:16}. If we have four such Schubert varieties defined by four lines in general position $L_1, L_2, L_3, L_4$, then their intersection is finite and
\begin{equation}\label{intersection_Schubert}
\#\,(\Omega(L_1) \cap \Omega(L_2) \cap\Omega(L_3) \cap\Omega(L_4)) = 2;
\end{equation}
see, e.g., \cite[Section 3.4.1]{eisenbud-harris:16}. The next lemma partly explains what generic means in this case. Recall that a \textit{quadric surface} in $\PP^3$ is an algebraic variety defined as the solution set to a single homogeneous polynomial of degree 2 in 4 variables.

\begin{lemma}\label{le: nongeneric_schubert} If $L_1,L_2,L_3,L_4\in\mathbb G$ are four disjoint lines in $\PP^3$, then either
\begin{enumerate}
\item all four lie on a smooth quadric surface, or, 
\item they do not lie on any quadric, and they have
(at most) two common transversals.
\end{enumerate}
\end{lemma}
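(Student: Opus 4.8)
The plan is to reduce everything to the classical geometry of the unique quadric determined by three pairwise skew lines, and then analyze the position of the fourth line relative to this quadric.

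First I would record the basic facts about three of the lines, say $L_1,L_2,L_3$. The space of quadric surfaces in $\PP^3$ is the projectivization of the $10$-dimensional space of symmetric $4\times 4$ matrices, hence $\PP^9$; requiring a fixed line to lie on a quadric is three linear conditions (the restriction of the quadratic form to the line is a binary quadratic form, whose three coefficients must vanish). Thus the quadrics through $L_1,L_2,L_3$ form a linear system of projective dimension at least $9-9=0$, so at least one such quadric $Q$ exists. Next I would argue that any quadric containing three pairwise skew lines is smooth: a quadric of rank $\le 2$ is a union of at most two planes, and by pigeonhole two of the three lines would share a plane and hence meet; a rank-$3$ quadric is a cone, on which every line passes through the vertex, so any two of its lines meet. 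Both contradict skewness, so $Q$ has rank $4$ and is smooth, and $L_1,L_2,L_3$ necessarily lie in a common ruling $\mathcal A$ (two distinct lines of a smooth quadric meet if and only if they belong to different rulings).

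Second I would identify the common transversals of $L_1,L_2,L_3$ with the opposite ruling $\mathcal B$ of $Q$. Every line of $\mathcal B$ meets every line of $\mathcal A$, so each is a common transversal. Conversely, a common transversal $T$ meets $L_1,L_2,L_3\subset Q$ in three points that are pairwise distinct by skewness, so $T$ meets $Q$ in at least three points and therefore $T\subset Q$; since $T$ meets lines of $\mathcal A$ it cannot lie in $\mathcal A$, so $T\in\mathcal B$. I would also establish that $Q$ is the \emph{unique} quadric through $L_1,L_2,L_3$: choosing three distinct lines $T_1,T_2,T_3\in\mathcal B$, any quadric $Q'$ through $L_1,L_2,L_3$ (again smooth, hence irreducible, by the rank argument above) must contain each $T_i$, since $T_i$ meets $Q'$ in three distinct points. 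Then $Q\cap Q'$ would contain the six distinct lines $L_1,L_2,L_3,T_1,T_2,T_3$, a curve of degree $6$; but two distinct irreducible quadrics meet in a complete-intersection curve of degree $4$ by Bézout, forcing $Q=Q'$.

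Finally I would split on the position of $L_4$. If $L_4\subset Q$, then $L_4$ is disjoint from $L_1\in\mathcal A$, so $L_4\notin\mathcal B$, hence $L_4\in\mathcal A$, and all four lines lie on the smooth quadric $Q$: this is alternative~(1). If $L_4\not\subset Q$, then by uniqueness no quadric at all contains $L_1,L_2,L_3,L_4$ (any quadric through the first three equals $Q$, which misses $L_4$); moreover every common transversal of the four lines lies in $\mathcal B$ and meets $L_4$ at a point of $L_4\cap Q$. Since $L_4\not\subset Q$, Bézout gives $\#(L_4\cap Q)\le 2$, and through each point of $Q$ passes exactly one line of $\mathcal B$, so there are at most two common transversals: this is alternative~(2). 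The main obstacle is the uniqueness of $Q$ in the third paragraph; everything else is bookkeeping of incidences on a smooth quadric. Uniqueness could alternatively be quoted from the standard theory of quadric surfaces, e.g. \cite{eisenbud-harris:16} or \cite{JoeHarris}, but the Bézout argument above is self-contained and avoids any genericity hypothesis.
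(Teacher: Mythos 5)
Your proof is correct, but it is worth noting that the paper does not actually prove Lemma~\ref{le: nongeneric_schubert} at all: it cites \cite[Lemma 6.16]{stevensmma320} and moves on. So you have supplied a genuinely self-contained argument where the paper outsources one. Your three ingredients closely parallel machinery the paper develops elsewhere for other purposes: existence, smoothness, and uniqueness of the quadric through three pairwise skew lines is the paper's Lemma~\ref{le: three lines}, and the structure of the two rulings (each point of a smooth quadric lies on exactly one line of each family, and opposite rulings meet) is Lemma~\ref{le: cont_fam_lines}. The one place you diverge in substance is uniqueness: the paper's Lemma~\ref{le: three lines} argues by taking a point $x\in Q'\setminus Q$ and producing a ruling line of $Q'$ through $x$ meeting $L_1,L_2,L_3$, which is then forced into $Q$; you instead put six distinct lines into $Q\cap Q'$ and invoke B\'ezout on the degree-$4$ complete intersection of two distinct irreducible quadrics. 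Both are sound; yours is arguably cleaner and more classical, and your endgame (common transversals of all four lines lie in the opposite ruling $\mathcal B$, meet $L_4$ in $L_4\cap Q$, and $\#(L_4\cap Q)\le 2$ when $L_4\not\subset Q$) correctly delivers both halves of alternative~(2), including the claim that no quadric at all contains the four lines. Two small checks you handled implicitly but correctly: the intersection points of a transversal with $L_1,L_2,L_3$ are pairwise distinct because the $L_i$ are disjoint (this is what forces $T\subset Q$), and $Q\cap Q'$ has no $2$-dimensional component because both quadrics are irreducible and distinct (this is what makes the degree count legitimate). Had you been writing within the paper, you could have shortened your first two paragraphs to citations of Lemmas~\ref{le: three lines} and~\ref{le: cont_fam_lines}, keeping only your B\'ezout uniqueness alternative and the analysis of $L_4$ as new content.
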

\begin{proof}
See \cite[Lemma 6.16]{stevensmma320}.
\end{proof}
It is an open condition that four lines in $\PP^3$ are disjoint, 
and we wish to understand what happens if some of the lines intersect. In particular, we need to characterize when there are infinitely many lines intersecting $k$ given lines $L_1,\ldots,L_k$. Equivalently, when the intersection of Schubert varieties $\bigcap_{i=1}^k\Omega(L_i)$ is positive dimensional. The answer is in Lemma \ref{le: inf_trans} below. For the proof, we need yet another lemma.

Let $Q$ be a quadric surface in $\PP^3$, defined by the vanishing of the homogeneous degree 2 polynomial~$q$. There is a unique $4\times 4$ symmetric matrix $A$ such that
$$q(x)=x^TAx.$$
Note that the gradient of $q$ is equal to $2Ax$. This implies that the quadric surface defined by $q$ is smooth if and only if $A$ is an invertible matrix because $2Ax=0$ implies $x^TAx=0$. A quadric containing a plane either has $\operatorname{rank} A=1$ in which case it is a double plane or $\operatorname{rank} A=2$ in which case it is a union of two distinct planes. If $\operatorname{rank} A=3$, then the surface is a cone. A full rank matrix $A$ can via linear coordinate change over $\CC$ be transformed into any other full rank symmetric matrix. This means that all smooth quadrics in $\PP^3$ are isomorphic. They especially differ by the linear coordinate change to the surface $Q$ defined by $q=x^TAx=x_0x_3-x_1x_2=0$, given by the matrix
$$A=\begin{bmatrix}
0 & 0 & 0 & 1\\
0 & 0 & -1 & 0\\
0 & -1 & 0 & 0 \\
1& 0 & 0 & 0 
\end{bmatrix}.$$

Next, we specialize the above result with two lemmas for the proof of our main theorem.
\begin{lemma}\label{le: cont_fam_lines} A smooth quadric $Q$ in $\PP^3$ consists of two continuous 1-dimensional families of lines. More precisely, any point $p\in Q$ meets exactly two distinct lines $L_1(p),L_2(p)\subseteq Q$, one from each family, and as $p$ moves continuously, so does $L_1(p),L_2(p)$. Every line from one family meets every line from the other family.
\end{lemma}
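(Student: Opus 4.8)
The plan is to use the canonical form established just before the statement. Since all smooth quadrics in $\PP^3$ are isomorphic via a linear coordinate change, and such a coordinate change is a projective linear isomorphism preserving lines, incidence, and continuity, it suffices to prove the claim for the single normal-form quadric $Q$ defined by $q = x_0x_3 - x_1x_2 = 0$. This reduction is the key simplification: everything becomes an explicit computation on the Segre quadric.

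First I would recognize $Q$ as the image of the Segre embedding $\PP^1 \times \PP^1 \hookrightarrow \PP^3$ given by $([s_0:s_1],[t_0:t_1]) \mapsto [s_0t_0 : s_0t_1 : s_1t_0 : s_1t_1]$, whose image satisfies exactly $x_0x_3 = x_1x_2$. Under this parametrization the two rulings become transparent: the first family consists of the lines where the first $\PP^1$-coordinate is fixed, $\{[s:\ast]\} \times \PP^1$, and the second family consists of the lines where the second $\PP^1$-coordinate is fixed, $\PP^1 \times \{[t:\ast]\}$. Concretely, fixing $[s_0:s_1]$ gives the line $L_1(s) = \{[s_0t_0:s_0t_1:s_1t_0:s_1t_1] \mid [t_0:t_1]\in\PP^1\}$, which is genuinely a line in $\PP^3$ (it is the projectivization of a $2$-dimensional linear space), and symmetrically for the second family. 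Each family is clearly a continuous $1$-dimensional family parametrized by $\PP^1$.

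Next I would verify the three incidence claims directly from the parametrization. Given $p \in Q$, write $p$ as a Segre point $([s_0:s_1],[t_0:t_1])$; this decomposition is unique because the Segre embedding is injective, so $p$ lies on exactly one line $L_1(p)$ from the first family and exactly one line $L_2(p)$ from the second, and these are distinct since they meet $Q$ in different $1$-parameter directions. Continuity of $p \mapsto L_1(p), L_2(p)$ follows because the map sending $p$ to its two Segre coordinates is continuous (it is the inverse of the Segre embedding restricted to $Q$, a homeomorphism onto its image in the Euclidean topology). Finally, a line $L_1(s)$ from the first family and a line $L_2(t')$ from the second family always meet, in the single point with Segre coordinates $([s_0:s_1],[t'_0:t'_1])$; this is immediate because fixing one coordinate from each factor determines a unique Segre point.

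The main obstacle is not any single computation but ensuring the claims transfer cleanly back from the normal form to an arbitrary smooth quadric. I would address this by noting that a projective linear isomorphism $\varphi$ carrying a general smooth quadric to $Q$ maps lines to lines, preserves the incidence relation $p \in L$, and is a homeomorphism in the Euclidean topology, so it carries the two rulings of one quadric bijectively onto the two rulings of the other, preserving continuity of the families and the "every line meets every line from the other family" property. A minor technical point worth stating explicitly is that the two lines through a point are genuinely \emph{distinct}, which holds because the two families are distinct (no line lies in both rulings, as a line in the first family has constant first Segre coordinate while a line in the second has constant second Segre coordinate, and a line cannot have both constant unless it is a point).
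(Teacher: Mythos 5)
Your reduction to the normal form $x_0x_3-x_1x_2=0$ and your description of the two rulings via the Segre embedding coincide in substance with the paper's proof: the paper's two explicit rank-$2$ matrices are exactly the Segre lines $\{[s_0t_0:s_0t_1:s_1t_0:s_1t_1]\}$ with one $\PP^1$-factor fixed, and your treatment of continuity, of distinctness of $L_1(p)$ and $L_2(p)$, of the incidence between the two families, and of the transfer back to an arbitrary smooth quadric by a projective linear change of coordinates is correct. However, there is a genuine gap in the ``exactly two'' part. You prove that $p$ lies on exactly one line \emph{from each family}, but the lemma asserts more: that $L_1(p)$ and $L_2(p)$ are the \emph{only} lines contained in $Q$ through $p$. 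Injectivity of the Segre embedding gives uniqueness of the Segre coordinates of $p$; it does not classify all lines lying on $Q$, and nothing in your argument excludes a line on $Q$ through $p$ belonging to neither ruling. The paper spends the entire second half of its proof on precisely this point: in an affine patch it writes a candidate line as $\{y+tv\}$ and notes that $q(y+tv)\equiv 0$ imposes one linear and one quadratic condition on $v$, giving at most two solutions up to scaling unless there are infinitely many, in which case $Q$ would contain a plane and could not be smooth. This exactness is not decorative; it is invoked later, e.g.\ in Case 2 of the proof of Theorem \ref{thm: main2}, where one needs that only two lines of the quadric meet a given camera center.

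The good news is that your Segre framework makes the missing step short. Writing $\sigma([s_0:s_1],[t_0:t_1])=[s_0t_0:s_0t_1:s_1t_0:s_1t_1]$ and letting $B(u,v)=u_0v_3+u_3v_0-u_1v_2-u_2v_1$ be the bilinear form of $q$, a line through two distinct points $u,v\in Q$ lies on $Q$ if and only if $B(u,v)=0$, and a direct computation gives $B\bigl(\sigma(s,t),\sigma(s',t')\bigr)=(s_0s_1'-s_1s_0')(t_0t_1'-t_1t_0')$. Hence any two points of a line contained in $Q$ must share a Segre coordinate, and applying this to pairs of points on a candidate line through $p$ shows the shared coordinate is the same along the whole line, so the line is one of the two ruling lines through $p$. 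Adding this computation (or the paper's affine-patch count) closes the gap; the remainder of your argument stands as written.
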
 

\begin{proof} By linear transformation, it is enough to prove the statement for the smooth quadric $Q$ defined by $x_0x_3-x_1x_2=0$. Let $[y_0:y_1:y_2:y_3]\in Q$, then the two lines spanned by the row vectors of the matrices
$${\begin{bmatrix} y_0&0&y_2&0\\
y_1&0&y_3&0 \\ {0} & {y_0} & {0} & {y_2}\\  {0} & {y_1} & {0} & {y_3}
\end{bmatrix}},\quad {\begin{bmatrix}
y_0&y_1&0&0\\
y_2&y_3&0&0 \\ {0} & {0} & {y_0} & {y_1}\\  {0} & {0} & {y_2} & {y_3}
\end{bmatrix} },$$
lie in $Q$ (note that both matrices are of rank 2 {and contain $[y_0:y_1:y_2:y_3]$ in their row span}). As $[y_0:y_1:y_2:y_3]$ changes continuously, so do the two lines. In this way, $Q$ consists of two continuous 1-dimensional families of lines. To see that there are no other lines through $[y_0:y_1:y_2:y_3]$, consider for instance an affine patch containing this point, say $y_0=1$. Write $y=(1,y_1,y_2,y_3)$ and consider a line $\{y+tv: t\in \CC\}\subseteq \CC^4$ for some $v=(0,v_1,v_2,v_3)\neq 0$. Note that setting $q(y+tv)=0$ for all $t$ gives two equations in $v_1,v_2,v_3$, one linear and one quadratic. Up to scaling, we get either at most two solutions for $v$ or infinitely many. In the case of infinitely many solutions, the surface $Q$ contains a plane and cannot be smooth. Using the explicit description of the two families of lines above, it can be directly checked that every line from one family meets every line from the other.
\end{proof}

\begin{figure}
    \centering
    \includegraphics[width = 0.75\textwidth]{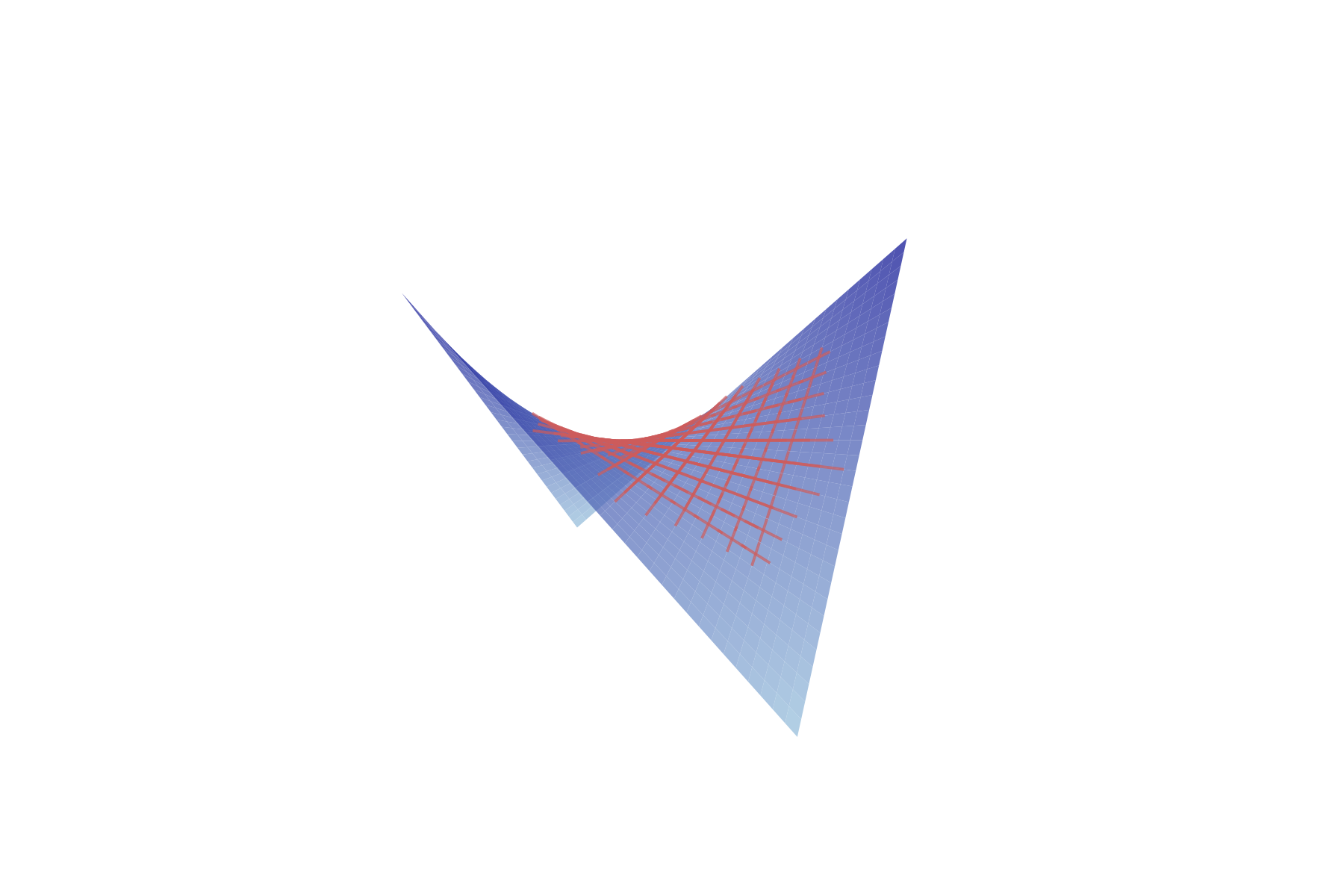}
    \caption{The pictures shows the quadric surface $x_0x_1-x_2x_3=0$ in the affine patch $x_0=1$. The red lines lie on the surface. The pictures were created using \texttt{Plots.jl} \cite{plots}.}
    \label{fog0}
\end{figure}

\begin{lemma}\label{le: inf_trans} Let $L_1,\ldots,L_k\in \mathbb G$ be $k$ lines in $\PP^3$. These lines have infinitely many common transversals if and only if they have three common transversals.
\end{lemma}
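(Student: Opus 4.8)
The easy direction is immediate: if there are infinitely many common transversals, then in particular there are three distinct ones, so all the content is in the converse. The plan is to reduce the whole question to intersecting the Klein quadric with a linear space, which dispenses with any case analysis on the mutual position of the transversals. Recall that the common transversals of $L_1,\dots,L_k$ are exactly the points of $\bigcap_{i=1}^k \Omega(L_i)\subseteq\mathbb G$. In Plücker coordinates the condition that a line $W$ with Plücker vector $q\in\PP^5$ meets a fixed line $L$ with Plücker vector $p$ is a single \emph{linear} equation in $q$: two lines $\mathrm{span}(a,b)$ and $\mathrm{span}(c,d)$ in $\PP^3$ meet if and only if $\det[\,a\mid b\mid c\mid d\,]=0$, and expressing this determinant through the $2\times 2$ minors of $[a\mid b]$ and $[c\mid d]$ yields (up to a scalar) the symmetric bilinear form $B$ polarizing the Klein quadratic form $Q(p)=p_0p_5-p_1p_4+p_2p_3$ from \eqref{pluecker_ideal}. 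Hence each $\Omega(L)=\mathbb G\cap\{q:B(p,q)=0\}$ is a hyperplane section of $\mathbb G$, and
\[
\bigcap_{i=1}^k \Omega(L_i)=\mathbb G\cap\Lambda,\qquad \Lambda:=\bigcap_{i=1}^k\{q\in\PP^5: B(p^{(i)},q)=0\},
\]
with $\Lambda$ a linear subspace of $\PP^5$ and $p^{(i)}$ the Plücker vector of $L_i$.

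The second step is to analyze $\mathbb G\cap\Lambda$ as the intersection of the quadric $\{Q=0\}$ with a linear space, by restricting $Q$ to $\Lambda$. Write $d=\dim\Lambda$. If $Q|_\Lambda\equiv 0$, then $\Lambda\subseteq\mathbb G$, so every point of $\Lambda$ is a common transversal; since three distinct transversals force $d\geq 1$, this already gives infinitely many. If $Q|_\Lambda\not\equiv 0$, then $\mathbb G\cap\Lambda$ is a quadric hypersurface inside $\Lambda\cong\PP^d$, of dimension $d-1$: for $d\geq 2$ this is a positive-dimensional projective variety over $\CC$, hence infinite; for $d=1$ it is the zero locus of a nonzero binary quadratic form, hence at most two points; and for $d\leq 0$ it is at most one point. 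Assembling the cases, whenever there are three distinct common transversals the finite alternatives ($d\leq 1$ with $Q|_\Lambda\not\equiv 0$) are excluded, so we land in a configuration with infinitely many, which is exactly the claim.

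The part that needs the most care is the low-dimensional bookkeeping, namely the case $d=1$: here three distinct points of $\mathbb G$ lying on a single projective line $\Lambda\cong\PP^1$ force the binary form $Q|_\Lambda$ to have three roots and hence to vanish identically, so $\Lambda\subseteq\mathbb G$ and the entire pencil $\Lambda$ consists of common transversals. Everything else rests on the standard fact that a nonempty quadric hypersurface in $\PP^d$ with $d\geq 2$ is infinite, together with the identification of the meeting condition as a bilinear pairing; I would make that identification the one computation I actually verify, since the rest is formal. As a geometric sanity check consistent with the conclusion $\Lambda\subseteq\mathbb G$: in the generic situation the three transversals are pairwise skew and lie on one ruling of the unique smooth quadric surface through them, so by Lemma~\ref{le: cont_fam_lines} the $L_i$ all lie on the opposite ruling, and the whole first ruling supplies the infinitely many common transversals.
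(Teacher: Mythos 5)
Your proof is correct, and it takes a genuinely different route from the paper. The paper argues synthetically, with a case analysis on the mutual position of the lines: for $k\le 3$ a codimension count in the $4$-dimensional Grassmannian suffices; for $k\ge 4$ it splits into the cases where all $L_i$ share a point, where some but not all do (leading to coplanarity arguments), and where the $L_i$ are pairwise disjoint, in which case Lemma~\ref{le: nongeneric_schubert} places $L_1,\dots,L_4$ on a smooth quadric and Lemma~\ref{le: cont_fam_lines} identifies the transversals with one ruling. You instead linearize the whole problem: each $\Omega(L_i)$ is the section of the Klein quadric $\{Q=0\}$ from \eqref{pluecker_ideal} by the tangent hyperplane $B(p^{(i)},\cdot)=0$, so the set of common transversals is $\mathbb G\cap\Lambda$ for a linear space $\Lambda$, and restricting $Q$ to $\Lambda$ gives the clean dichotomy: either $Q|_\Lambda\equiv 0$ and $\Lambda\subseteq\mathbb G$, or $\mathbb G\cap\Lambda$ is a quadric hypersurface in $\PP^d$, which is infinite for $d\ge 2$ and has at most two points for $d\le 1$. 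The crucial low-dimensional step -- three distinct points on a $\PP^1$ force a binary quadratic form to vanish identically -- is exactly right, and your verification checklist correctly isolates the one computation that matters, namely that the meeting condition of two lines is the polarization of the Plücker quadratic form (a Laplace expansion of $\det[a\mid b\mid c\mid d]$). Your argument is more uniform and in fact proves a sharper statement than the lemma: the set of common transversals of any family of lines either has at most two elements or is infinite, with no genericity hypotheses and no case analysis. What the paper's longer synthetic proof buys in exchange is geometric information about the degenerate configurations (common point, coplanar families, the two rulings of the quadric) that is reused verbatim in the proof of Theorem~\ref{thm: main2}, Cases 2 and 3, so the machinery of Lemmas~\ref{le: nongeneric_schubert} and~\ref{le: cont_fam_lines} is needed there regardless; your sanity check at the end correctly recovers that ruling picture as the generic instance of $\Lambda\subseteq\mathbb G$.
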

\begin{proof} 
 If there are infinitely common transversals, then there are three. So, we need to show that if there are three distinct lines $O_1,O_2,O_3$ intersecting each $L_i$, then there are infinitely many. If $k\leq 3$, there are always infinitely many common transversals, because each Schubert variety $\Omega(L_i)$ is a hypersurface, so $\bigcap_{i=1}^k\Omega(L_i)$ has codimension at most 3 in the 4-dimensional variety $\mathbb G$, hence is positive dimensional.  Assume $k\ge 4$. We consider three different cases. 

The first case is when all lines $L_i$ meet in a point $q$. Then each line through $q$ is a common transversal.

In the second case the first $s\geq 2$ lines $L_1,\ldots,L_s$ meet in a point $q$, and $q\not \in L_{s+1},\ldots, L_k$. We consider two subcases: 
If $O_1$ and $O_2$ meet $q$, then, since they meet in a point, they span a plane $P$. The last $k-s$ lines~$L_{s+1},\ldots, L_k$ intersect $O_1$ and $O_2$ simultaneously, in other words, each of them meets the plane in two distinct points. Therefore, $L_{s+1},\ldots, L_k\subset P$, and so every line in $P$ through $q$ is a transversal of $L_i$ for every $i$. If $O_1$ and $O_2$ do not meet $q$, then we consider the plane $P'$ spanned by $L_1$ and $L_2$. Both $O_1$ and $O_2$ have two intersection points with $P'$, so $O_1,O_2\subset P'$. This implies that $O_1$ and $O_2$ meet in a point $q'\in P'$. Each of the $L_i$ must either meet $q'$ or be contained in $P'$. Therefore, every line in $P'$ through $q'$ is a transversal of $L_i$ for every $i$. In both cases, there are infinitely many common transversals. {If $O_1$ meets $q$ and $O_2$ does not, then either $O_1,O_3$ or $O_2,O_3$ fall under one of the two subcases above.}

Finally, we have the case where the $L_i$ are pairwise disjoint. We have that $L_1,L_2,L_3,L_4$ lie on a smooth quadric $Q$ by Lemma \ref{le: nongeneric_schubert}. Since each $O_i$ intersects $L_1,L_2,L_3,L_4$ in different points, $O_i$ intersects $Q$ in at least 4 points. But then $O_i$ must be contained in $Q$, because the restriction of a degree 2 polynomial to a line gives a univariate polynomial of degree 2, which either has at most two solutions or is constant and equal to zero. Therefore, we have $O_1,O_2,O_3\subseteq Q$. By Lemma \ref{le: cont_fam_lines}, $O_1,O_2,O_3$ are part of the same family of the two families of lines on $Q$. The lines $L_1,\ldots, L_4$ are therefore part of the other family and so, there is a family of lines intersecting each $L_i$.
\end{proof}

Finally, we also need the following lemma for our proofs in the next section.
\begin{lemma}\label{le: three lines} Any three lines in $\PP^3$ lie on a quadric. If the lines are disjoint, the quadric is smooth and unique. 
\end{lemma}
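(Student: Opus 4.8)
The plan is to prove existence by a parameter count, then establish smoothness by ruling out each degenerate quadric, and finally derive uniqueness from the double-ruling of smooth quadrics. For existence I would count dimensions in the space of quadrics. A quadric in $\PP^3$ is given by a nonzero quadratic form $q(x)=x^TAx$ with $A$ a symmetric $4\times 4$ matrix, and these forms span a vector space of dimension $\binom{5}{2}=10$. Containing a line $L=L(x,y)$ means the binary form $q(sx+ty)$ vanishes identically in $(s,t)$; since a binary quadratic form has three coefficients, this amounts to $3$ linear conditions on $A$. Three lines therefore impose at most $9$ linear conditions, so the space of quadratic forms vanishing on $L_1\cup L_2\cup L_3$ has dimension at least $10-9=1$. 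Hence there is a nonzero such form, i.e.\ a quadric through the three lines. This settles the first claim for arbitrary (not necessarily disjoint) lines.

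Next, assume the three lines are pairwise disjoint and let $Q$ be any quadric containing them, with defining symmetric matrix $A$. I would show $Q$ is smooth by excluding each lower rank. If $\operatorname{rank} A\leq 2$, then $Q$ is a double plane or a union of two planes, so any line contained in $Q$ lies in one of (at most) two planes; by pigeonhole two of $L_1,L_2,L_3$ then lie in a common plane and hence intersect, contradicting disjointness. If $\operatorname{rank} A=3$, then $Q$ is a cone, and every line on a cone passes through its vertex (a line on the cone missing the vertex would project from the vertex to a line contained in a smooth conic, which is impossible), so $L_1,L_2,L_3$ would all meet at the vertex, again a contradiction. Therefore $\operatorname{rank} A=4$ and $Q$ is smooth.

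For uniqueness, suppose $Q_1\neq Q_2$ are two quadrics containing the disjoint lines $L_1,L_2,L_3$; by the previous step both are smooth, hence irreducible surfaces of dimension $2$. Since each Schubert variety $\Omega(L_i)$ is a hypersurface in the $4$-dimensional $\mathbb G$, the intersection $\Omega(L_1)\cap\Omega(L_2)\cap\Omega(L_3)$ is positive-dimensional, so the three lines have infinitely many common transversals. Every common transversal $T$ meets $Q_1$ in the three distinct points $T\cap L_1,\,T\cap L_2,\,T\cap L_3$, and since the restriction of a degree-$2$ form to a line has at most two zeros unless it vanishes identically, $T\subseteq Q_1$, and likewise $T\subseteq Q_2$. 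By Lemma \ref{le: cont_fam_lines}, the pairwise disjoint lines $L_1,L_2,L_3$ all belong to one of the two rulings of $Q_1$; every line of the other ruling meets all three, hence is a common transversal, and these lines sweep out all of $Q_1$. As each such transversal lies in $Q_2$, we conclude $Q_1\subseteq Q_2$, and Lemma \ref{lemma_X_equal_to_Y} forces $Q_1=Q_2$, a contradiction. Thus the quadric is unique.

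I expect the existence and smoothness steps to be routine, the former a bare dimension count and the latter a short case analysis on $\operatorname{rank} A$. The main obstacle is the uniqueness argument: the crucial idea is to transfer information from the three lines to the entire quadric through their common transversals, using the double-ruling of Lemma \ref{le: cont_fam_lines}, rather than attempting the more delicate direct verification that the nine linear conditions on $A$ are independent.
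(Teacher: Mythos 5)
Your proof is correct and takes essentially the same route as the paper: existence by the same linear parameter count (your three conditions per line are the paper's three points per line), smoothness by eliminating ranks $1$--$3$ of the symmetric matrix via pairs of intersecting lines, and uniqueness via the double ruling of Lemma \ref{le: cont_fam_lines} combined with the degree-$2$-restriction-to-a-line argument. The only cosmetic differences are that you sweep out $Q_1$ by the opposite ruling to conclude $Q_1\subseteq Q_2$, whereas the paper picks a point $x\in Q'\setminus Q$ and runs the same ruling-line argument through $x$; your preliminary Schubert-variety count of common transversals is redundant, since the ruling already supplies them.
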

\begin{proof}
If three points of a line lie on a quadric surface, then the whole line must lie on it. This is because the restriction of a degree 2 polynomial to a line gives a univariate polynomial of degree 2, which either has at most two solutions or every point is a solution. Take nine distinct points, three from each line. A quadric in $\PP^3$ is determined by ten coefficients, and nine linear constraints on these imply that there is at least one solution.

Assume that three disjoint lines $L_1,L_2,L_3$ lie on two quadric surfaces $Q,Q'$. We show that $Q=Q'$ and that $Q$ is smooth. First, we show smoothness: Two lines out of any three lines in a plane or union of two planes must meet, and in a cone, any two lines meet. {Identifying the quadric $Q$ with its matrix, recall that a plane corresponds to $\mathrm{rank} \;Q=1$, a union of two planes correspond to $\mathrm{rank} \;Q=2$ and a cone corresponds to $\mathrm{rank} \;Q=3$. By process of elimination, the matrix of $Q$ must have rank 4, and we have seen that this implies that it is a smooth quadric.} Now, we show uniqueness. Assume there is a point $x\in Q'\setminus Q$. By Lemma \ref{le: cont_fam_lines}, $L_1,L_2,L_3$ are from the same family of lines in both $Q,Q'$ and especially, in $Q'$ there is a line $L$ passing through each $L_1,L_2,L_3$ and $x$. But since the three distinct intersection points between $L$ and $L_i$ also lie in $Q$, the line $L$ must lie in $Q$ showing that we have~$x\in Q$.
\end{proof}

\section{Line Multiview Varieties}\label{s: line_mult} 
A pinhole camera is a linear map $\mathbb{P}^3\dashrightarrow \mathbb P^2,$ $x \mapsto C x,$ defined by a full rank $3\times 4$ matrix~$C\in\mathbb C^{3\times 4}$. It induces the following camera map for lines
\begin{equation}\label{def_cam_map}
\mathbb{G}\dashrightarrow\mathbb P^2,\quad L \mapsto \ell = C \cdot L,
\end{equation}
which maps the line $L(x,y)$ to the line through $Cx,Cy$ in $\PP^2$. We identify a line in $\PP^2$ with its linear equation~$\ell$, which is a point in $\PP^2$. That is,~$x\in \ell$ if and only if $x^T\ell = 0$. In fact, $\ell$ is the kernel of the rank-2 matrix $C\rho(L)C^T\in\mathbb C^{3\times 3}$, so (\ref{def_cam_map}) is a rational map.
The kernel of the camera matrix
$$c=\ker(C) \in\mathbb P^3$$
is called the \textit{camera center}. The map (\ref{def_cam_map}) is defined outside the variety of lines passing through $c$.
For every image line, $\ell\in\PP^2$ we have that $C^T\ell\in \PP^3$ defines the plane that is projected onto $\ell$ by~$C$.
To see this, let~$p\in \PP^3$. Then~$p$ is projected onto the line $\ell$ if and only if~$(Cp)^T\ell=0$, which is equivalent to~$p^T(C^T\ell)=0$. The map that sends $C^T\ell$ back to $\ell$ is given by the pseudo-inverse matrix $(C^T)^\dagger$ of~$C^T$, which has the property that~$(C_{i}^{T})^{\dagger}C_{i}^{T}=\mathrm{Id}_{\mathbb C^3}$.

Let $m\geq 2$ and $\mathcal{C}=\{C_1,\dots,C_m\}$ be an arrangement of $m$ pinhole cameras  with different centers. This is our assumption throughout this paper. 
We use the notation  $\ell=(\ell_1,\ldots,\ell_m)\in(\PP^2)^m$, 
$$h_i := C^T_i\ell_i \quad \text{ and } H_i=\left\{p\in \PP^3\mid p^Th_i=0\right\}.$$
We call the plane $H_i$ the \emph{back-projected plane} of the image line $\ell_i$. As pointed out above, the back-projected plane $H_i$ is the plane that projects to $\ell_i$ under the camera matrix $C_i$. This is the geometric interpretation we always keep in mind. For every subset of indices $I\subseteq \{1,\ldots,m\}$, $|I|\geq 2$, we denote the span of the camera centers with index in~$I$ by
\begin{equation}\label{def_E_I}
E_I := \operatorname{span}\{c_i \mid i\in I\}.
\end{equation}
We say that the camera centers (or simply cameras) indexed by $I$ are \emph{collinear}, if $E_I$ is a line called the \textit{baseline} of the centers indexed by $I$. We say that they are \emph{coplanar}, if $E_I$ is a plane.

We consider the \textit{joint camera map}
\begin{equation}\label{def_camera_map}
    \Upsilon_{\mathcal C}: \mathbb G \dashrightarrow (\mathbb P^2)^m,\quad
    L\mapsto (C_1\cdot L,\ldots, C_m\cdot L),
\end{equation}
which sends a line in 3-space to the lines in the image of the cameras, meaning its projections with respect to the camera matrices $C_i$. Observe that $\Upsilon_{\mathcal C}^{-1}(\ell)$ consists of the lines contained in $H_1\cap\cdots \cap H_m$ {meeting no center}.

In this section, we characterize in full generality the \textit{line multiview variety} $$\mathcal{L}_{\mathcal{C}}:=\overline{\Upsilon_{\mathcal{C}}(\mathbb G)},$$
defined as the Zariski closure of the image of the joint camera map. This variety was described for three cameras whose centers are linearly independent in \cite[Theorem 3.10]{Kileel_Thesis}. The line multiview variety is also the Euclidean closure of $\Upsilon_{\mathcal{C}}$. This is implied by Chevalley's theorem; see \cite[Theorem 4.19]{michalek2021invitation}.
For a tuple  $x=(x_1,\ldots,x_m) \in (\mathbb C^3)^m$ we denote the matrix
$$M(x) =  \begin{bmatrix}C_1^Tx_1 & \cdots & C_m^Tx_m\end{bmatrix} \in\mathbb C^{4\times m}.$$
Notice that the rank of this matrix only depends on the projective classes of the $x_i$. For a tuple $\ell=(\ell_1,\ldots,\ell_m)\in(\PP^2)^m$ the $i$-th column $h_i=C_i^T\ell_i$ of $M(\ell)$ defines the back-projected plane $H_i$. If the~$\ell_i$ are images of a joint line $L\subset \PP^3$, the $H_i$ meet in $L$, and so $M(\ell)^Tp=0$ for all $p\in L$. Consequently, the kernel of $M(\ell)^T$ contains two linearly independent vectors, meaning that the rank of $M(\ell)$ is at most 2. The back-projected planes meet in exactly a line when the rank of $M(\ell)$ is equal to 2. If the rank of $M(\ell)$ is 1, the back-projected planes meet in a plane. Theorem \ref{thm: main1} below shows that under natural conditions these rank conditions completely characterize the line multiview variety. Before we state this theorem, however, let us first inspect some basic properties of the line multiview variety. The proofs of these properties are presented in Section \ref{sec: proof basics} below.

\begin{theorem}\label{thm_dimension}
The line multiview variety $\mathcal{L}_{\mathcal{C}}$ is an irreducible variety of dimension $4$.
\end{theorem}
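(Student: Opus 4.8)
The plan is to establish the two claims of Theorem \ref{thm_dimension}---irreducibility and that $\dim\mathcal{L}_\mathcal{C}=4$---separately, leaning on the general machinery developed in Section~1. For irreducibility, the work is essentially already done: the Grassmannian $\mathbb{G}$ is irreducible by \eqref{pluecker_ideal}, the joint camera map $\Upsilon_\mathcal{C}:\mathbb{G}\dashrightarrow(\mathbb{P}^2)^m$ is a rational map (each component $L\mapsto C_i\cdot L$ is rational, as noted after \eqref{def_cam_map}), and $\mathcal{L}_\mathcal{C}$ is by definition the Zariski closure $\overline{\Upsilon_\mathcal{C}(\mathbb{G})}$. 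Hence Lemma \ref{image_is_irreducible} applies directly and gives that $\mathcal{L}_\mathcal{C}$ is irreducible.

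For the dimension, the plan is to show $\dim\mathcal{L}_\mathcal{C}=4$ by the two inequalities $\le 4$ and $\ge 4$. The upper bound follows immediately from Lemma \ref{rational_dimension}: the map $\Upsilon_\mathcal{C}$ is dominant onto its image $\mathcal{L}_\mathcal{C}$ (by construction, as $\mathcal{L}_\mathcal{C}=\overline{\Upsilon_\mathcal{C}(\mathbb{G})}$), so $\dim\mathcal{L}_\mathcal{C}\le\dim\mathbb{G}=4$ using \eqref{dimension_G}. The harder direction is the lower bound $\dim\mathcal{L}_\mathcal{C}\ge 4$, which amounts to showing that $\Upsilon_\mathcal{C}$ does not collapse dimension, i.e., that its generic fibers are finite (equivalently zero-dimensional). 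For this I would use the fiber description already recorded in the text: $\Upsilon_\mathcal{C}^{-1}(\ell)$ consists of the lines contained in $H_1\cap\cdots\cap H_m$ meeting no camera center, where $H_i$ is the back-projected plane of $\ell_i$. Since the cameras have distinct centers and $m\ge 2$, for a generic line $L$ the two back-projected planes $H_1,H_2$ are distinct and meet in exactly the line $L$ itself. Thus the generic fiber is a single point, so $\Upsilon_\mathcal{C}$ is generically injective (indeed birational onto its image), giving $\dim\mathcal{L}_\mathcal{C}=\dim\mathbb{G}=4$.

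To make the genericity argument rigorous I would exhibit a single line $L$ (or an open set of lines) for which $H_1\neq H_2$, so that $H_1\cap H_2$ is a line; since $L\subseteq H_1\cap H_2$ and both are lines, $L=H_1\cap H_2$ is recovered uniquely from $(\ell_1,\ell_2)$, hence from $\ell$. Because the two centers are distinct, one can always choose $L$ so that its images $\ell_1,\ell_2$ back-project to distinct planes; the set of such $L$ is Zariski open and nonempty in $\mathbb{G}$, and by Lemma \ref{le: polyzar} passing to this open set does not change the closure of the image. This shows $\Upsilon_\mathcal{C}$ restricts to an injective map on a dense open set, so no dimension is lost.

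The main obstacle I anticipate is the careful verification of generic finiteness of the fibers, specifically confirming that for a generic $L$ the first two back-projected planes are genuinely distinct and transverse rather than coinciding---this is where the hypothesis of distinct camera centers is used and must be invoked precisely. A subtlety to handle cleanly is that injectivity on a dense open set yields only a bound on the generic fiber dimension, which combined with Lemma \ref{rational_dimension} (applied in both directions, or via the fiber-dimension theorem) pins down $\dim\mathcal{L}_\mathcal{C}=4$; I would phrase this via the birationality of $\Upsilon_\mathcal{C}$ onto $\mathcal{L}_\mathcal{C}$ to keep the argument self-contained and to reuse the same map structure later when smoothness is discussed.
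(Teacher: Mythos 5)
Your proposal is correct, and for the harder half (the lower bound $\dim\mathcal{L}_{\mathcal{C}}\geq 4$) it takes a genuinely different route from the paper. The paper never discusses fibers at this stage: it considers the projection $\pi:\mathcal{L}_{\mathcal{C}}\to\PP^2\times\PP^2$ onto the first two factors, observes that any two back-projected planes meet in at least a line and that avoiding the camera centers is an open condition, so that $(\pi\circ\Upsilon_{\mathcal{C}})(\mathbb{G})$ is nonempty Zariski open in $\PP^2\times\PP^2$, and then applies Lemma \ref{rational_dimension} to the dominant map $\pi$ to get $\dim\mathcal{L}_{\mathcal{C}}\geq\dim(\PP^2\times\PP^2)=4$. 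You instead prove generic injectivity of $\Upsilon_{\mathcal{C}}$ (for generic $L$ not meeting the baseline through $c_1,c_2$, the planes $H_1,H_2$ are distinct, so $L=H_1\cap H_2$ is recovered) and conclude via birationality. Your injectivity argument is sound and the distinct-centers hypothesis is used correctly, but note one point of rigor: the fiber-dimension theorem, and likewise the fact that a generically injective dominant map in characteristic zero is birational, are \emph{not} among the tools proved in Section~1, so to stay within the paper's machinery you should exhibit the rational inverse explicitly --- e.g., the dual Pl\"ucker coordinates of $H_1\cap H_2$ are the $2\times 2$ minors of the matrix with rows $h_1^T,h_2^T$, hence polynomial in $\ell$ --- and then apply Lemma \ref{rational_dimension} to this dominant rational map $\mathcal{L}_{\mathcal{C}}\dashrightarrow\mathbb{G}$. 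With that small completion your argument is fully valid, and it buys something the paper's does not: it simultaneously establishes generic identifiability, which the paper only proves later as item 1 of Proposition \ref{properties_of_LMV} and reuses in the smoothness analysis, whereas the paper's projection argument is shorter and needs nothing beyond its own Lemma \ref{rational_dimension}.
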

A consequence is that the multiview variety of two cameras with different centers is equal to $\PP^2\times \PP^2$, since this is the only irreducible variety of dimension four inside $\PP^2\times \PP^2$. This also makes intuitive sense; two back-projected planes always meet in at least a line $L$, and it's an open condition for this~$L$ to be projected to the original image lines.

If the cameras are given by real matrices, one may wonder if the equations for $\mathcal{L}_{\mathcal{C}}$ already define the locus of real images $\mathcal{L}_{\mathcal{C}}^{\mathbb R}:=\left\{\ell\in \mathcal{L}_{\mathcal{C}}\mid \overline{\ell}=\ell\right\}$ (consisting of those tuples of lines which are fixed by complex conjugation). The next theorem shows that this is indeed true and that the ideal of polynomial equations vanishing on $\mathcal{L}_{\mathcal{C}}^{\mathbb R}$ is the ideal of $\mathcal{L}_{\mathcal{C}}$.
\begin{theorem}\label{thm_real_dimension}
Suppose that the camera matrices $C_i\in\mathbb R^{3\times 4}$ are real matrices. Then, the real line multiview variety $\mathcal{L}_{\mathcal{C}}^{\mathbb R}$ is Zariski dense in $\mathcal{L}_{\mathcal{C}}$, and the smooth points in $\mathcal{L}_{\mathcal{C}}^{\mathbb R}$ form a smooth manifold of real dimension $4$.
\end{theorem}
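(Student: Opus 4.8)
The plan is to establish the two claims separately, leaning on the machinery already developed. For the Zariski density of $\mathcal{L}_{\mathcal{C}}^{\mathbb R}$ in $\mathcal{L}_{\mathcal{C}}$, I would mimic the argument that gave \eqref{Zariski closure real Grassmannian}. Concretely, I would first observe that because the $C_i$ are real, the map $\Upsilon_{\mathcal{C}}$ commutes with complex conjugation, so it sends real lines to real tuples: $\Upsilon_{\mathcal{C}}(\mathbb G_{\mathbb R})\subseteq \mathcal{L}_{\mathcal{C}}^{\mathbb R}$. The chart $\tau$ from \eqref{def_tau} restricts to a real parametrization $\tau:\mathbb R^{2\times 2}\to \mathbb G_{\mathbb R}$, and since the Zariski closure of $\mathbb R^{2\times 2}$ is $\mathbb C^{2\times 2}$, Lemma \ref{le: polyzar} applied to the rational map $\Upsilon_{\mathcal{C}}\circ\tau$ yields $\overline{\Upsilon_{\mathcal{C}}(\tau(\mathbb R^{2\times 2}))}=\overline{\Upsilon_{\mathcal{C}}(\mathbb G)}=\mathcal{L}_{\mathcal{C}}$. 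Since $\Upsilon_{\mathcal{C}}(\tau(\mathbb R^{2\times 2}))\subseteq \mathcal{L}_{\mathcal{C}}^{\mathbb R}\subseteq \mathcal{L}_{\mathcal{C}}$, taking Zariski closures forces $\overline{\mathcal{L}_{\mathcal{C}}^{\mathbb R}}=\mathcal{L}_{\mathcal{C}}$, which is exactly the density statement. Density in the Zariski topology over $\CC$ immediately gives that the real defining ideal and the complex defining ideal coincide.

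For the second claim, the smooth real locus, I would argue that $\mathcal{L}_{\mathcal{C}}^{\mathbb R}$ is the real point set of the complex variety $\mathcal{L}_{\mathcal{C}}$, and that near a smooth complex point which is also real, the implicit function theorem applies to the real-analytic system of equations cut out by the (real) generators of $I(\mathcal{L}_{\mathcal{C}})$. At such a point the complex Jacobian attains rank equal to the codimension; because the defining polynomials can be taken with real coefficients, the real Jacobian attains the same rank, so the real solution set is locally a smooth real manifold of dimension $\dim_{\RR}=\dim_{\CC}\mathcal{L}_{\mathcal{C}}=4$, invoking Theorem \ref{thm_dimension}. Here I would use that a smooth real point has a real tangent space of the same dimension as the complex tangent space at that point, since both are computed as the kernel of the same (real) Jacobian.

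The main obstacle is justifying the dimension count for the \emph{real} manifold rigorously, since we only have a \emph{set-theoretic} description of $\mathcal{L}_{\mathcal{C}}$ via the rank condition \eqref{LC_general_case}, not its full defining ideal. To navigate this, rather than differentiating unknown ideal generators I would work with the explicit birational parametrization underlying the smoothness results referenced in the introduction: $\Upsilon_{\mathcal{C}}$ restricted to a suitable Zariski-open set is a birational map from $\mathbb G$, and Theorem \ref{purity_thm} together with the smoothness of the relevant open part of $\mathcal{L}_{\mathcal{C}}$ identifies an open dense locus of smooth points. On the real side, I would verify that a real smooth point of $\mathcal{L}_{\mathcal{C}}$ has a real preimage under (the real restriction of) this parametrization, so that a real chart passes through it; the nonsingularity of the parametrization's differential, which can be checked over $\RR$ since everything is real, then furnishes the local $4$-dimensional real manifold structure. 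I expect the delicate point to be confirming that the smooth real points are nonempty and form an open subset of $\mathcal{L}_{\mathcal{C}}^{\mathbb R}$ of the correct dimension, rather than collapsing to a lower-dimensional real slice; the density argument from the first part guarantees that real points meet the smooth locus, closing this gap.
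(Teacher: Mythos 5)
Your density argument is essentially the paper's own: the paper likewise pushes $\mathbb G_{\mathbb R}$ through $\Upsilon_{\mathcal C}$, uses $\overline{\mathbb G_{\mathbb R}}=\mathbb G$ from \eqref{Zariski closure real Grassmannian} (itself obtained via $\tau$ and $\overline{\mathbb R^{2\times 2}}=\mathbb C^{2\times 2}$), and concludes $\overline{\mathcal L^{\mathbb R}_{\mathcal C}}=\mathcal L_{\mathcal C}$ by Lemma~\ref{le: polyzar}; like the paper, you apply that lemma to a dense rather than open subset, which is harmless since its proof only uses $I(U)=I(X)$. For the manifold statement your route genuinely differs. The paper invokes Theorem~\ref{thm: main2} to know that $\mathcal L_{\mathcal C}$ is cut out by real equations, then cites \cite[Theorem 4.3]{breiding2021algebraic} to get $\dim_{\mathbb R}\mathcal L^{\mathbb R}_{\mathcal C}=4$ and \cite[Theorem 1]{whitney} for the manifold structure of the smooth locus. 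Your implicit-function-theorem argument at real points that are smooth on the complex variety is a legitimate, more self-contained alternative, and it even buys something: it does not lean on Theorem~\ref{thm: main2}, whose proof appears only later. Moreover, the obstacle you flag --- that only a set-theoretic description of $\mathcal L_{\mathcal C}$ is available --- is not actually an obstacle for this argument: you never need explicit generators of $I(\mathcal L_{\mathcal C})$, only that real generators \emph{exist}, and this follows abstractly from the density you just proved, since $I(\mathcal L_{\mathcal C})=I(\mathcal L^{\mathbb R}_{\mathcal C})$ is then stable under coefficientwise conjugation and hence generated by the real polynomials $\tfrac12(f+\bar f)$, $\tfrac1{2i}(f-\bar f)$. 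The rank of that real Jacobian at a real point is the same over $\mathbb R$ as over $\mathbb C$, so the IFT gives the local $4$-manifold, and density guarantees real points hit the complex-smooth locus, as you say.

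The detour through Theorem~\ref{purity_thm} is the weak part of the proposal and should be dropped: van der Waerden's purity theorem is used in the paper (Section 3, proof of Theorem~\ref{thm:smoothness}) to \emph{exhibit singular points}, not to certify smoothness, and your phrase ``together with the smoothness of the relevant open part'' presupposes what you would be trying to establish. Your claim that a real smooth point has a real preimage under the parametrization is in fact true when $\operatorname{rank} M(\ell)=2$ (the kernel of a real matrix is spanned by real vectors, so the line $L$ is real), but the direct IFT argument already renders the whole parametrization detour unnecessary. One residual caveat: your argument covers real points that are smooth points of the \emph{complex} variety; if ``smooth points of $\mathcal L^{\mathbb R}_{\mathcal C}$'' is read in the real-algebraic sense, as the paper's appeal to Whitney suggests, there could a priori also be real-smooth points lying over complex-singular ones, and the paper's citation handles that reading directly while yours would need a word about which notion of smoothness is intended.
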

The difference between the line multiview variety and the image of the joint camera map is discussed in the next proposition.

\begin{proposition}\label{prop joint image}
We have $\Upsilon_{\mathcal{C}}(\mathbb G) = \mathcal{L}_{\mathcal C}\setminus (\mathcal{X}\setminus \mathcal{Z})$, where
    \begin{align*}
        \mathcal{X}&:=\left\{ \ell\in (\PP^2)^m \mid c_i^TM(\ell)=0 \text{ for some } i\right\} \quad \text{and}\quad
        \mathcal{Z}:=\left\{\ell\in (\PP^2)^m \mid \operatorname{rank} M(\ell)=1 \right\}.
    \end{align*}
In other words, to obtain the image, we remove all image lines whose back-projected planes meet in exactly a line that goes through a camera center.
\end{proposition}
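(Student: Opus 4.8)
The plan is to characterize the image $\Upsilon_{\mathcal{C}}(\mathbb G)$ directly from the geometric description $\Upsilon_{\mathcal{C}}^{-1}(\ell) = \{L \subset H_1 \cap \cdots \cap H_m \mid L \text{ meets no center}\}$ given just before the statement, and then match this up with the set-theoretic difference $\mathcal{L}_{\mathcal C}\setminus (\mathcal{X}\setminus \mathcal{Z})$. First I would fix $\ell = (\ell_1,\ldots,\ell_m) \in \mathcal{L}_{\mathcal C}$, so that $\operatorname{rank} M(\ell) \leq 2$, and analyze when $\ell$ lies in the image. A tuple $\ell$ is in the image precisely when there exists a line $L$ in the common intersection $H_1 \cap \cdots \cap H_m$ that (a) actually projects to each $\ell_i$ (as opposed to being sent to a point or lying in a camera center), and (b) passes through no camera center. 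The condition $\operatorname{rank} M(\ell) = 2$ guarantees that $\bigcap_i H_i$ is exactly a line $L$; the degenerate case $\operatorname{rank} M(\ell) = 1$ (i.e.\ $\ell \in \mathcal{Z}$) means all back-projected planes coincide, so $\bigcap_i H_i$ is that single plane and contains a whole pencil of candidate lines.

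The key interpretive step is to understand the conditions $c_i^T M(\ell) = 0$ defining $\mathcal{X}$. I would show that $c_i^T M(\ell) = 0$ holds if and only if the camera center $c_i$ lies on every back-projected plane $H_j$, equivalently $c_i \in H_1 \cap \cdots \cap H_m$. Indeed, the $j$-th entry of the row vector $c_i^T M(\ell)$ is $c_i^T C_j^T \ell_j = c_i^T h_j$, which vanishes exactly when $c_i \in H_j$. So $\ell \in \mathcal{X}$ means that some camera center $c_i$ lies in the common intersection of all back-projected planes. When $\operatorname{rank} M(\ell) = 2$, this common intersection is the unique line $L$, so $\ell \in \mathcal{X} \setminus \mathcal{Z}$ says precisely that the reconstructed line $L$ passes through a camera center --- exactly the tuples we must discard, matching the final sentence of the statement.

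The proof then splits into the two inclusions. For $\Upsilon_{\mathcal{C}}(\mathbb G) \subseteq \mathcal{L}_{\mathcal C}\setminus(\mathcal{X}\setminus\mathcal{Z})$: any $\ell = \Upsilon_{\mathcal{C}}(L)$ comes from a genuine line $L$ meeting no center with each $\ell_i = C_i \cdot L$; such $\ell$ lies in $\mathcal{L}_{\mathcal C}$, and since $\Upsilon_{\mathcal C}$ is only defined on lines avoiding all centers, $L$ is the rank-2 intersection $\bigcap_i H_i$ containing no $c_i$, so $\ell \notin \mathcal{X}\setminus\mathcal{Z}$. For the reverse inclusion, take $\ell \in \mathcal{L}_{\mathcal C}$ with $\ell \notin \mathcal{X}\setminus\mathcal{Z}$ and produce a preimage. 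If $\operatorname{rank} M(\ell) = 2$, the intersection $L := \bigcap_i H_i$ is a line, and the hypothesis $\ell\notin\mathcal{X}$ (using $\ell\notin\mathcal Z$) forces $L$ to avoid all centers, so $L$ is in the domain of $\Upsilon_{\mathcal C}$; one checks $C_i \cdot L = \ell_i$ for each $i$, giving $\ell \in \Upsilon_{\mathcal C}(\mathbb G)$. If instead $\operatorname{rank} M(\ell) = 1$, i.e.\ $\ell \in \mathcal{Z}$, all planes $H_i$ coincide in a single plane $H$, and I would argue that one can select a line $L \subseteq H$ avoiding every camera center (there are only finitely many centers, and a plane contains an infinite family of lines, so a generic choice works) with $C_i \cdot L = \ell_i$; this again realizes $\ell$ as an image.

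The main obstacle I anticipate is the case analysis in the rank-$1$ locus $\mathcal{Z}$: here the fiber contains many lines and I must verify both that a suitable $L$ realizing all the $\ell_i$ exists and that it can be chosen to avoid the centers, while also confirming that every such $\ell \in \mathcal{Z}$ really does lie in $\mathcal{L}_{\mathcal C}$ (rather than merely in its closure). A secondary subtlety is checking that when $\operatorname{rank} M(\ell)=2$ the unique line $L=\bigcap_i H_i$ projects back to the \emph{given} $\ell_i$ and not to some degenerate image --- this uses that $C_i \cdot L$ is determined by $C_i \rho(L) C_i^T$ having kernel $\ell_i$, which should follow from $h_i = C_i^T\ell_i$ being the defining equation of $H_i \supseteq L$ together with $C_i$ having full rank.
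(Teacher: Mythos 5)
Your proposal is correct and follows essentially the same route as the paper's proof: both argue through the rank of $M(\ell)$, identifying the rank-2 case with the unique common line of the back-projected planes (in the image precisely when it avoids all centers, and passing through a center exactly when $\ell\in\mathcal X\setminus\mathcal Z$), and handling the rank-1 case by choosing a line in the common plane that misses the finitely many centers. The only slip is in your forward inclusion: a genuine image point $\ell=\Upsilon_{\mathcal C}(L)$ can have $\operatorname{rank} M(\ell)=1$ (e.g.\ when $L$ lies in a plane containing all camera centers), so $L$ need not be ``the rank-2 intersection'' $\bigcap_i H_i$ --- but in that case $\ell\in\mathcal Z$, so the conclusion $\ell\notin\mathcal X\setminus\mathcal Z$ still holds and the argument repairs itself within your own case analysis.
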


More basic properties of the line multiview variety are presented in the next proposition.
\begin{proposition}\label{properties_of_LMV} Let $\mathcal{C}$ be a collection of cameras. 
\begin{enumerate}
\item $\Upsilon_{\mathcal C}$ is generically identifiable: for all $L\in\mathbb G$ with $\operatorname{rank} M(\ell) =2$, where $\ell =\Upsilon_{\mathcal C}(L)$,  we have~$\Upsilon_{\mathcal C}^{-1}(\ell) = \{L\}$. If $\operatorname{rank} M(\ell)=1$, then $\Upsilon_{\mathcal C}^{-1}(\ell)$ contains infinitely many lines.
\item Let $\mathcal{C}'\subseteq \mathcal{C}$ be a subcollection of cameras with indices $I$, $\vert I\vert \geq 2$. Let $\pi$ be the projection from $\mathcal{L}_{\mathcal{C}}$ to the factors corresponding to the indices of $I$, then
$\pi(\mathcal{L}_{\mathcal{C}})=\mathcal{L}_{\mathcal{C}'}.$
\end{enumerate}
\end{proposition}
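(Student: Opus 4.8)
The plan is to exploit the geometric description recorded just before the proposition: that $\Upsilon_{\mathcal C}^{-1}(\ell)$ is exactly the set of lines contained in the intersection of back-projected planes $H_1\cap\cdots\cap H_m$ that meet no camera center. Since $H_1\cap\cdots\cap H_m$ is the projectivization of $\ker M(\ell)^T$ and $M(\ell)\in\mathbb C^{4\times m}$, this intersection is a flat whose projective dimension is $3-\operatorname{rank}M(\ell)$. For the first claim I would first treat $\operatorname{rank} M(\ell)=2$: here the intersection is a single line in $\PP^3$, and since $L$ is by hypothesis a line contained in it with $\ell=\Upsilon_{\mathcal C}(L)$, the line $L$ must equal that intersection. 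As the only line contained in a line is the line itself, we get $\Upsilon_{\mathcal C}^{-1}(\ell)=\{L\}$.

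For $\operatorname{rank} M(\ell)=1$ the intersection $H:=H_1\cap\cdots\cap H_m$ is a plane. The lines contained in $H$ form a two-dimensional family, whereas those passing through any one of the finitely many camera centers lying in $H$ form a finite union of one-dimensional pencils; removing these still leaves infinitely many lines, each of which meets no center and lies in every $H_i$, hence maps to $\ell$ under $\Upsilon_{\mathcal C}$. This produces infinitely many preimages. Both cases are routine once the dimension count for $H_1\cap\cdots\cap H_m$ is in place, so I do not anticipate an obstacle here.

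For the second claim, the starting observation is that the coordinate projection $\pi:(\PP^2)^m\to(\PP^2)^{|I|}$ satisfies $\pi\circ\Upsilon_{\mathcal C}=\Upsilon_{\mathcal C'}$ on the dense open set $U\subset\mathbb G$ of lines meeting none of the $m$ centers, where both maps are simultaneously defined. By Lemma \ref{le: polyzar} we may then compute $\mathcal{L}_{\mathcal C}=\overline{\Upsilon_{\mathcal C}(U)}$ and $\mathcal{L}_{\mathcal C'}=\overline{\Upsilon_{\mathcal C'}(U)}$. Since $\pi$ is a morphism it is Zariski-continuous, which gives $\pi(\mathcal{L}_{\mathcal C})\subseteq\overline{\pi(\Upsilon_{\mathcal C}(U))}=\overline{\Upsilon_{\mathcal C'}(U)}=\mathcal{L}_{\mathcal C'}$. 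For the reverse inclusion I would invoke that $\pi$ is a closed map, being a projection of a product of projective spaces (the main theorem of elimination theory, i.e. completeness of $\PP^2$); hence $\pi(\mathcal{L}_{\mathcal C})$ is a closed set containing $\Upsilon_{\mathcal C'}(U)$ and therefore contains $\mathcal{L}_{\mathcal C'}=\overline{\Upsilon_{\mathcal C'}(U)}$. Combining the two inclusions yields $\pi(\mathcal{L}_{\mathcal C})=\mathcal{L}_{\mathcal C'}$. The one point requiring genuine care — and the main obstacle — is the closedness of $\pi$; once that is granted, everything else is formal manipulation of Zariski closures.
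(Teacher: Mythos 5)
Your proof is correct. For item~1 you follow essentially the same route as the paper: both arguments rest on the observation that $\Upsilon_{\mathcal C}^{-1}(\ell)$ is the set of lines in $H_1\cap\cdots\cap H_m$ avoiding the centers, with the rank of $M(\ell)$ controlling whether this intersection is the line $L$ itself or a plane containing a two-dimensional family of lines, of which only finitely many pencils are excluded; your explicit count $3-\operatorname{rank}M(\ell)$ for the dimension of the flat just makes the paper's implicit dichotomy precise. For item~2, however, you take a genuinely different and arguably cleaner route. The paper only proves the inclusion $\pi(\mathcal{L}_{\mathcal C})\subseteq\mathcal{L}_{\mathcal C'}$ directly (using closedness of $\pi$ and Lemma~\ref{le: polyzar}) and then forces equality by a dimension argument: both sides are irreducible of dimension~$4$ by Theorem~\ref{thm_dimension}, Lemma~\ref{image_is_irreducible} and Lemma~\ref{rational_dimension}, so Lemma~\ref{lemma_X_equal_to_Y} applies. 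You instead prove both inclusions directly from the identity $\pi\circ\Upsilon_{\mathcal C}=\Upsilon_{\mathcal C'}$ on the dense open set $U$ of lines missing all $m$ centers: continuity of $\pi$ gives $\pi(\mathcal{L}_{\mathcal C})\subseteq\overline{\Upsilon_{\mathcal C'}(U)}=\mathcal{L}_{\mathcal C'}$, and closedness of $\pi$ gives $\mathcal{L}_{\mathcal C'}=\overline{\Upsilon_{\mathcal C'}(U)}\subseteq\pi(\mathcal{L}_{\mathcal C})$ since $\Upsilon_{\mathcal C'}(U)=\pi(\Upsilon_{\mathcal C}(U))\subseteq\pi(\mathcal{L}_{\mathcal C})$. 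Your version buys self-containedness --- it needs neither the dimension of $\mathcal{L}_{\mathcal C}$ nor irreducibility of the image, only Lemma~\ref{le: polyzar} and the standard fact that projections from products of projective spaces are closed (which both proofs invoke, the paper via \cite[Proposition 7.16]{Gathmann}, you via completeness of $\PP^2$) --- whereas the paper's version reuses machinery it has already built and emphasizes the four-dimensionality of every $\mathcal{L}_{\mathcal C}$, a fact it needs elsewhere anyway.
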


Let us now move towards the main theorems of our paper, Theorems \ref{thm: main1} and \ref{thm: main2}, {which characterize set-theoretically the line multiview variety} $\mathcal{L}_{\mathcal{C}}$.

\begin{theorem}\label{thm: main1} Let $\mathcal{C}$ be a collection of $m$ cameras with distinct centers. We have
$$\mathcal{L}_{\mathcal{C}}= \left\{\ell \in (\mathbb P^2)^m \mid \mathrm{rank}\ M(\ell) \leq 2\right\}$$ 
if and only if no four cameras are collinear. This is precisely when the variety on the right-hand side is irreducible and of dimension $4$.
\end{theorem}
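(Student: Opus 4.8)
Write $V := \{\ell \in (\mathbb{P}^2)^m \mid \operatorname{rank} M(\ell) \le 2\}$ for the right-hand side. The plan is to show that the three statements ``$\mathcal{L}_{\mathcal{C}} = V$'', ``no four cameras are collinear'', and ``$V$ is irreducible of dimension $4$'' are equivalent. Two facts hold for any arrangement. First, $\mathcal{L}_{\mathcal{C}} \subseteq V$: the condition $\operatorname{rank} M(\ell) \le 2$ is the vanishing of the $3\times 3$ minors of $M(\ell)$, hence Zariski closed, and it holds on $\Upsilon_{\mathcal{C}}(\mathbb G)$ because the back-projected planes of an honest image tuple all contain the underlying line; thus $\mathcal{L}_{\mathcal{C}} = \overline{\Upsilon_{\mathcal{C}}(\mathbb G)} \subseteq V$. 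Second, $\mathcal{L}_{\mathcal{C}}$ is irreducible by Lemma~\ref{image_is_irreducible} and of dimension $4$ by Theorem~\ref{thm_dimension}. Consequently, by Lemma~\ref{lemma_X_equal_to_Y}, $V$ is irreducible of dimension $4$ \emph{if and only if} $\mathcal{L}_{\mathcal{C}} = V$: one direction is inheritance, and the other is because $\mathcal{L}_{\mathcal{C}} \subseteq V$ with equal dimensions forces equality. It therefore remains to prove that $\mathcal{L}_{\mathcal{C}} = V$ holds exactly when no four cameras are collinear.

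\noindent\textbf{No four collinear $\Rightarrow V \subseteq \mathcal{L}_{\mathcal{C}}$.} Assuming the hypothesis, I would prove the reverse inclusion by analyzing $r := \operatorname{rank} M(\ell)$ for $\ell \in V$ (note $r \ge 1$, as each $C_i^T$ is injective). If $r = 1$, all back-projected planes coincide with a single plane $H$, and $h_i = C_i^T\ell_i$ being a common vector $h$ forces $c_i^T h = 0$, i.e.\ every center lies on $H$; choosing any line $L \subseteq H$ avoiding the finitely many centers gives $\Upsilon_{\mathcal{C}}(L) = \ell$, so $\ell$ even lies in the image. If $r = 2$, then $\bigcap_i H_i$ is a single line $L$, and the centers lying on $L$ are collinear, hence at most three by hypothesis. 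When no center lies on $L$, again $\ell = \Upsilon_{\mathcal{C}}(L)$ since $H_i = \operatorname{span}(c_i,L)$. The essential remaining case is when $L$ passes through $s \in \{1,2,3\}$ centers.

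\noindent\textbf{The crux (main obstacle).} For such $\ell$ I would realize it as a limit $\Upsilon_{\mathcal{C}}(L_t) \to \ell$ along lines $L_t \to L$ meeting no center, using that the Zariski and Euclidean closures of the image agree. For a center $c_j \notin L$ the plane $\operatorname{span}(c_j, L_t)$ tends automatically to $H_j$, so the difficulty is to steer the limiting planes $\operatorname{span}(c_i, L_t)$ at the $s \le 3$ centers $c_i \in L$ to the prescribed planes $H_i$ simultaneously. The limiting plane at $c_i$ is the plane through $L$ in the normal direction of the deformation at $c_i$, and first-order deformations of $L$ in $\mathbb{G}$ are sections of the normal bundle $N_{L/\mathbb{P}^3} \cong \mathcal{O}_{\mathbb{P}^1}(1)^{\oplus 2}$, whose space of sections is $4$-dimensional. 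Prescribing a projective normal direction at one point of $L$ imposes a single linear condition, so at $s \le 3$ points the directions forced by the target planes $H_i$ can be met by a nonzero section (which can be arranged not to vanish at the $c_i$); this produces the family $L_t$ and shows $\ell \in \overline{\Upsilon_{\mathcal{C}}(\mathbb G)}^{\mathrm E} = \mathcal{L}_{\mathcal{C}}$. It is precisely this interpolation that fails at four points, which is where the hypothesis is used. Together with the previous cases this yields $V \subseteq \mathcal{L}_{\mathcal{C}}$, hence $\mathcal{L}_{\mathcal{C}} = V$; I expect the careful bookkeeping of this limiting argument (non-vanishing of the section, and that $L_t$ misses every center) to be the hardest part.

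\noindent\textbf{Four collinear $\Rightarrow \mathcal{L}_{\mathcal{C}} \ne V$.} Finally, let $I$ with $|I| \ge 4$ index centers on a common baseline $L^*$, and let $W$ be the locus of $\ell$ whose back-projected planes all contain $L^*$: for $i \in I$ this leaves a pencil $\mathbb{P}^1$ of choices for $\ell_i$ (the planes through $L^*\ni c_i$), and for $j \notin I$ it forces $\ell_j = C_j\cdot L^*$. Then $W \subseteq V$, since the common line $L^*$ makes $\operatorname{rank} M(\ell) \le 2$, and $W \cong (\mathbb{P}^1)^{|I|}$ is irreducible of dimension $|I| \ge 4$. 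For $i \in I$ and generic $L$, the plane $\operatorname{span}(c_i, L)$ does not contain the fixed line $L^*$, so the generic point $\Upsilon_{\mathcal{C}}(L)$ of $\mathcal{L}_{\mathcal{C}}$ is not in $W$; hence $\mathcal{L}_{\mathcal{C}} \not\subseteq W$ and $\mathcal{L}_{\mathcal{C}} \ne W$. If $V$ were irreducible of dimension $4$, then from $\mathcal{L}_{\mathcal{C}}, W \subseteq V$ and Lemma~\ref{lemma_X_equal_to_Y} we would get $\mathcal{L}_{\mathcal{C}} = V$ and $W = V$ (impossible already if $\dim W > 4$), contradicting $\mathcal{L}_{\mathcal{C}} \ne W$. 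Thus $V$ is not irreducible of dimension $4$, and since $\mathcal{L}_{\mathcal{C}}$ is, we conclude $\mathcal{L}_{\mathcal{C}} \ne V$. This closes all the equivalences.
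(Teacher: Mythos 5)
Your overall architecture is sound, and two of its three pieces are correct: the inclusion $\mathcal{L}_{\mathcal{C}}\subseteq V$ and the equivalence between ``$V$ is irreducible of dimension $4$'' and ``$\mathcal{L}_{\mathcal{C}}=V$'' (via Theorem \ref{thm_dimension} and Lemma \ref{lemma_X_equal_to_Y}) are exactly the paper's bookends; and your construction of $W\cong(\mathbb{P}^1)^{|I|}\subseteq V$ for the four-collinear direction is a correct, self-contained alternative to the paper's route, which instead derives everything from the general description $\mathcal{L}_{\mathcal{C}}=\mathcal{V}_{\mathcal{C}}\cap\mathcal{Y}_{\mathcal{C}}$ of Theorem \ref{thm: main2} and, for the converse, shows via Lemmas \ref{le: three lines} and \ref{le: nongeneric_schubert} that a general point of $\mathcal{V}_{\mathcal{C}}$ fails the Schubert-transversal condition. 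The genuine gap is in your crux, and it is not mere bookkeeping.

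Your interpolation claim --- that for $s\le 3$ centers on $L$ there is a section of $N_{L/\mathbb{P}^3}\cong\mathcal{O}_{\mathbb{P}^1}(1)^{\oplus 2}$ realizing the prescribed normal directions and nonvanishing at all $s$ points --- is false when exactly two of the three prescribed planes coincide. In coordinates with $L=\{x_2=x_3=0\}$ one has $N_{L/\mathbb{P}^3}\cong\mathcal{O}_{\mathbb{P}^1}(1)\otimes\CC^2$, and the plane $\{\lambda x_2+\mu x_3=0\}\supset L$ corresponds to the \emph{same} direction $[\mu:-\lambda]\in\PP(\CC^2)$ in every fiber. Write $\sigma=(\sigma_1,\sigma_2)$ with $\sigma_j\in H^0(\mathcal{O}_{\mathbb{P}^1}(1))$. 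If $H_1=H_2\neq H_3$, say with directions $d_1=d_2=(1,0)$ and $d_3=(0,1)$, then $\sigma(c_1)\parallel d_1$ and $\sigma(c_2)\parallel d_2$ force $\sigma_2(c_1)=\sigma_2(c_2)=0$, hence $\sigma_2\equiv 0$; the condition $\sigma(c_3)\parallel(0,1)$ then forces $\sigma_1(c_3)=0$, so \emph{every} admissible section vanishes at $c_3$. Thus no first-order deformation simultaneously prescribes the limiting plane at $c_3$ and moves $L$ off $c_3$; the limit there is governed by second-order data your argument does not control. Such points genuinely occur in $V$: take $m=3$, three collinear centers $c_1,c_2,c_3\in L$ (permitted, since only four collinear centers are excluded) and back-projected planes $H,H,H_3$ with $H\neq H_3$, so that $\operatorname{rank} M(\ell)=2$. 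Your linear count does work when the planes $H_i$, $c_i\in L$, are pairwise distinct (and also when all three coincide, taking $\sigma_2\equiv 0$ and $\sigma_1$ nonvanishing at the $c_i$), but the ``exactly two equal'' configuration is precisely the degeneration that forces the hardest part of the paper's proof of Theorem \ref{thm: main2} --- its Case 3, where the lines $F_I(\ell_i)$ lie in a union of two planes and one must approximate by disjoint lines $G_i^{(n)}$ on smooth quadrics before falling back to the quadric-ruling limit of Case 2. Repairing your proof requires either a higher-order curve construction at the vanishing point or a degeneration argument of that type; as written, the step fails on actual points of $V$.
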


\begin{remark}
{The rank condition on the right of this theorem defines an ideal $I$. Theorem \ref{thm: main1} does not imply $I=I(\mathcal{L}_{\mathcal{C}})$. We will deal with the question whether or not this is true in a follow-up paper.}
\end{remark}

\begin{remark} The \emph{trifocal tensor}, as described in \cite[Section 15]{Hartley2004}, gives a polynomial equation that encodes information of when three image lines are the projections of the same world line points; i.e., when they are a point in $\mathcal L_{\mathcal C}$. The trifocal tensor is a $3\times 3\times 3$ tensor (i.e., a bilinear map $\mathbb C^3\times \mathbb C^3\to\mathbb C^3$). It is defined as follows. Let $\ell=(\ell_1,\ell_2,\ell_3)\in\mathcal L_{\mathcal C}$ and $h_i=C_i^T\ell_i$. We know from Theorem~\ref{thm: main1} that $\operatorname{rank}\ M(\ell)\leq 2$. Since $C_1$ has full-rank, we can find an invertible matrix $A\in\mathbb C^{4\times 4}$ such that $Ah_1=(\ell_1,0)^T$, and we have $\operatorname{rank} M(\ell) =\operatorname{rank} A\,M(\ell)$. The upper $3\times 3$ determinant of the $4\times 3$ matrix $AM(\ell)$ vanishes and therefore gives a trilinear equation of the form $\ell_1^T\,T_1(\ell_2,\ell_3)=0$. The bilinear map $T_1$ is the trifocal tensor. Similarly, we can find trifocal tensors $T_2$ and $T_3$.
\end{remark}

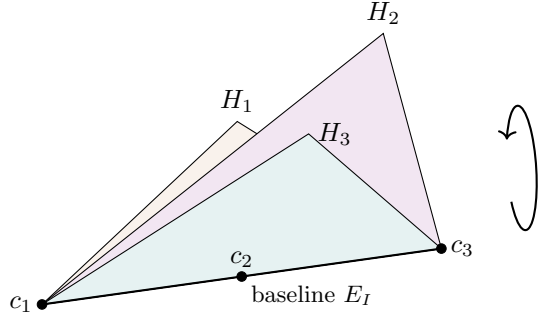
\begin{figure}
\begin{center}
    \begin{tikzpicture}[rotate around x=10, rotate around y=25, rotate around z=0, scale=0.9]
    \coordinate (p) at (-1,0,-2);
    \coordinate (q) at (0,-1,-3);
    \coordinate (r) at (1,2,0);
    \coordinate (c) at (-4,0,2);
    \coordinate (c2) at (-1.25,-0.25,2);
    \coordinate (c3) at (1.5,-0.5,2);
    \draw[fill=brown!10] (c) -- (c3) -- (p) -- cycle;
    \draw[fill=violet!10] (c) -- (c3) -- (r) -- cycle;
    \draw[fill=teal!10] (c) -- (c3) -- (q) -- cycle;
    \draw[thick] (c) -- (c3);
    \draw (p) node[above] {${H_1}$};
    \draw (q) node[right] {${H_3}$};
    \draw (r) node[above] {${H_2}$};
    \draw[fill] (c) circle (2pt) node[left] {${c_1}$};
    \draw[fill] (c2) circle (2pt) node[above] {${c_2}$};
    \draw[fill] (c3) circle (2pt) node[right] {${c_3}$};
    \draw (c2) node[below right] {\small baseline $E_I$};
    \draw[thick, ->] (2.5,0,2) arc (-150:150:0.2 and 1);
    \end{tikzpicture}
\end{center}
\caption{\label{fig4} {The picture shows three collinear cameras $c_1,c_2,c_3$ and three back-projected planes $H_1,H_2,H_3$. Every arrangement of planes that is obtained by rotating $H_1,H_2,H_3$ individually around the baseline gives again three back-projected planes. This shows that for $k$ collinear cameras, the variety $\left\{\ell \in (\mathbb P^2)^m \mid \mathrm{rank}\ M(\ell) \leq 2\right\}$ has a $k$-dimensional irreducible component that is different from $\mathcal L_{\mathcal C}$. Hence, when we have $k\geq 4$ collinear cameras we can't have equality in Theorem \ref{thm: main1}, because $\mathcal L_{\mathcal C}$ is irreducible and of dimension 4 by Theorem \ref{thm_dimension}.}}
\end{figure}

{When there are four or more collinear cameras, we need more constraints -- Figure \ref{fig4} shows why in this case we can't have equality in Theorem \ref{thm: main1}.

We explain the additional polynomial equations when we have four or more collinear cameras. For this let} $I\subset  \{1,\ldots,m\}$ be a subset of indices such that $|I|\geq 4$, and the camera centers with index in $I$ are collinear; that is, such that $E_I$ as defined in~\eqref{def_E_I} is a line. We denote its dual line relative to the standard Hermitian inner product by
$$E_I^*:=\left\{p\in \PP^3\mid p^*q = 0 \text{ for all } q\in E_I\right\}\in\mathbb G.$$
\begin{remark}
The reason why we use the Hermitian inner product here is that $E_I\cap E_I^*= \emptyset$ for any sets of cameras. By contrast, we can have $E_I=E_I^\perp$ (where the latter is defined as in (\ref{def_dual_lines})). For instance, if $E_I$ is spanned by $[1:i:0:0]$ and $[0:0:1:i]$, then $p^Tp=0$ for every point $p\in E_I$. The proof of Theorem \ref{thm: main2} below is based on the assumption that $E_I$ and its dual are two different lines, and this is why we use here the Hermitian dual, not the Euclidean.
\end{remark}

For $\ell=(\ell_1,\ldots,\ell_m)\in (\PP^2)^m$ we denote, as before, the back-projected planes by $H_i$. We write
\begin{equation}\label{def_F_I}
F_{I}(\ell_i): = \operatorname{span}\big(\{c_i\} \cup (H_i\cap E_I^*)\big).
\end{equation}
If $E_I^*$ is not contained in $H_i$, they meet in a point $q$ and $F_I(\ell_i)$ is the line through $c_i$ and $q$. If $E_I^*$ is contained in $H_i$, then $F_I(\ell_i)$ is the plane spanned by $c_i$ and $E_I^*$. Associated to $i$ and $I$ we denote the Schubert variety of lines intersecting $F_{I}(\ell_i)$ by
\begin{equation*}
\Omega_I(\ell_i):=\{L\in\mathbb G \mid L\cap F_{I}(\ell_i) \neq \emptyset\}.
\end{equation*}
Figure~\ref{fig1} provides a geometric interpretation of these Schubert varieties.

Generically, $F_{I}(\ell_i)$ is a line (depicted as the red lines in Figure \ref{fig1}). When $|I|\ge 4= \dim \mathbb G$,  we expect $\bigcap_{i\in I} \Omega_I(\ell_i)$ to be zero-dimensional or empty, because each $\Omega_I(\ell_i)$ is generically a hypersurface. We denote the exceptional locus by~$\mathcal Y_{\mathcal{C},I}:=\left\{\ell=(\ell_1,\ldots,\ell_m)\in (\PP^2)^m\mid \dim \bigcap_{i\in I} \Omega_I(\ell_i) \geq 1\right\}.$
This is an algebraic subvariety of $ (\PP^2)^m$. To see this, recall from Lemma \ref{le: inf_trans} that $\ell\in \mathcal Y_{\mathcal{C},I}$, if and only if there are at least three distinct lines intersecting every $F_I(\ell_i)$. Since $E_I$ and $E_I^*$ are distinct and intersect every~$F_I(\ell_i)$, we have to find a third line. Let $f_1,f_2$ be two fixed points that span the line $E_I^*$. {We claim that $\mathcal{Y}_{\mathcal{C},I}\subset (\PP^2)^m$ is the set of points $\ell\in (\PP^2)^m$ such that there is an $L\in \mathbb{G}$ that intersects each $F_I(\ell_i)$ for $i\in I$ in a point of the form $a_i=s_ic_i+t_if_1+u_if_2$, where $s_i=t_i+u_i$. This is an algebraic variety, because projections {from projective varieties} are closed maps \cite[Proposition 7.16]{Gathmann}. Now we prove the claim: If for each $s_i$, we have $s_i=0$, then $[s_i:t_i:u_i]=[0:1:-1]$ and the linear spaces $F_I(\ell_i)$ meet in a common point $q=f_1-f_2$. In the plane spanned by $q$ and $E_I$ there is a 2-dimensional family of lines intersecting each $F_I(\ell_i)$. If $s_i\neq 0$ for some $i$, then $a_i$ does not meet either of $E_I,E_I^*$, and so there is a third line not equal to $E_I$ or $E_I^*$ intersecting each $F_I(\ell_i)$.}

We define
$$\mathcal Y_\mathcal{C} := \bigcap_{I \subset \{1,\ldots,m\}:\: E_I \text{ is a line}} \mathcal{Y}_{\mathcal{C},I}.$$
Notice that, if $I\subset J$ then $\mathcal Y_{\mathcal{C},J}\subset \mathcal Y_{\mathcal{C},I}$. Furthermore, if $|I|\leq 3$, then $\mathcal Y_{\mathcal{C},I} = (\PP^2)^m$. Therefore, if $\mathcal I$ is the set containing all the maximal sets of indices corresponding to four or more collinear cameras, we have the more finely grained description $\mathcal Y_{\mathcal{C}} := \bigcap_{I \in\mathcal I} \mathcal{Y}_{\mathcal{C},I}$.

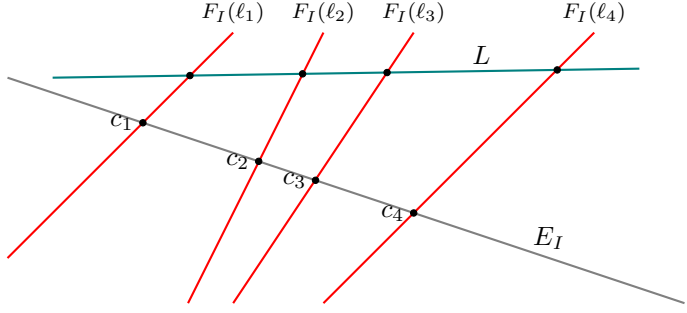
\begin{figure}
    \begin{center}
\begin{tikzpicture}[scale=0.6]
\draw[gray, thick] (-3,1) -- (12,-4);
\draw[red, thick] (-3,-3) -- (2,2);
\draw (2,2) node[anchor=south]{{\footnotesize	$F_I(\ell_1)$}};
\draw[red, thick] (1,-4) -- (4,2); 
\draw (4,2) node[anchor=south]{{\footnotesize	$F_I(\ell_2)$}};
\draw[red, thick] (4,-4) -- (10,2);
\draw (10,2) node[anchor=south]{{\footnotesize	$F_I(\ell_4)$}};
\draw[red, thick] (2,-4) -- (6,2);
\draw (6,2) node[anchor=south]{{\footnotesize	$F_I(\ell_3)$}};
\filldraw[black] (-3+0.2*15,1-0.2*5) circle (2pt);
\filldraw[black] (-3+0.371*15,1-0.371*5) circle (2pt);
\filldraw[black] (-3+0.6*15,1-0.6*5) circle (2pt);
\filldraw[black] (-3+0.455*15,1-0.455*5) circle (2pt);
\draw (-3+0.2*15,1-0.2*5)node[anchor=east]{$c_1$};
\draw (-3+0.371*15,1-0.371*5)node[anchor=east]{$c_2$};
\draw (-3+0.455*15,1-0.455*5)node[anchor=east]{$c_3$};
\draw (-3+0.6*15,1-0.6*5)node[anchor=east]{$c_4$};
\draw (9,-3) node[anchor=south]{$E_I$};
\node[anchor=west] (source) at (-2+0.426*13+0.2,1+0.426*0.2){};
\draw (7.5,1.1) node[anchor=south]{$L$};
\draw[teal, thick] (-2,1) -- (11,1.2);
\filldraw[black] (-2+0.57*13,1+0.57*0.2) circle (2pt);
\filldraw[black] (-2+0.86*13,1+0.86*0.2) circle (2pt);
\filldraw[black] (-2+0.426*13,1+0.426*0.2) circle (2pt);
\filldraw[black] (-2+0.234*13,1+0.234*0.2) circle (2pt);
\end{tikzpicture}
\end{center}
\caption{\label{fig1}The baseline $E_I$ passes through 4 camera centers $c_1,c_2,c_3,c_4$. Generically, a point $\ell=(\ell_1,\ell_2,\ell_3,\ell_4)\in(\PP^2)^4$
defines four lines $F_I(\ell_i)$ (in red) for $1\leq i\leq 4$ in 3-space. By (\ref{intersection_Schubert}), if the 4 red lines are in general position, there are~2 complex lines meeting all of them, and $E_I$ is one of them. In the picture, the green line $L$ is another line meeting all~$F_I(\ell_i)$.
If the locus of lines meeting all 4 red lines is of positive dimension,~$\ell$ belongs to the exceptional locus $\mathcal Y_\mathcal{C}$.}
\end{figure}

The next theorem gives now a full characterization of the line multiview variety in the presence of collinear cameras. We give a proof in Section \ref{proof thm: main2}.

\begin{theorem}\label{thm: main2} Let
$\mathcal{C}$ be a collection of $m$ cameras with distinct centers. 
Then
$$\mathcal{L}_{\mathcal{C}} = \left\{\ell \in (\mathbb P^2)^m \mid \mathrm{rank}\ M(\ell) \leq 2\right\}\cap  \mathcal Y_\mathcal{C}.$$
\end{theorem}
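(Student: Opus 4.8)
The plan is to prove the two inclusions separately, writing $\mathcal{R} := \{\ell \in (\PP^2)^m \mid \operatorname{rank} M(\ell) \le 2\} \cap \mathcal{Y}_\mathcal{C}$ for the right-hand side. For $\mathcal{L}_\mathcal{C} \subseteq \mathcal{R}$, the rank bound already holds on all of $\mathcal{L}_\mathcal{C}$, being the closed condition established before Theorem \ref{thm: main1}, so it remains to show $\mathcal{L}_\mathcal{C} \subseteq \mathcal{Y}_{\mathcal{C},I}$ for each index set $I$ with $E_I$ a line. Since $\mathcal{Y}_{\mathcal{C},I}$ is Zariski closed it suffices to check this on a dense subset, so I would fix a generic $L \in \mathbb{G}$ and set $\ell = \Upsilon_\mathcal{C}(L)$. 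The three lines $E_I$, $E_I^*$ and $L$ are then common transversals of the family $\{F_I(\ell_i)\}_{i \in I}$: indeed $c_i \in E_I \cap F_I(\ell_i)$, one has $H_i \cap E_I^* \subseteq F_I(\ell_i)$ so $E_I^*$ meets $F_I(\ell_i)$, and $L$ and $F_I(\ell_i)$ are two lines inside the common plane $H_i$ (recall $L \subseteq H_i$ because $\ell = \Upsilon_\mathcal{C}(L)$, while $F_I(\ell_i) \subseteq H_i$ by construction), hence they meet. For generic $L$ these three transversals are distinct, so Lemma \ref{le: inf_trans} forces $\dim \bigcap_{i \in I} \Omega_I(\ell_i) \ge 1$, that is $\ell \in \mathcal{Y}_{\mathcal{C},I}$. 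Taking closures and intersecting over all such $I$ gives $\mathcal{L}_\mathcal{C} \subseteq \mathcal{Y}_\mathcal{C}$.

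For $\mathcal{R} \subseteq \mathcal{L}_\mathcal{C}$ I would argue by cases on the geometry of $\ell \in \mathcal{R}$, the common mechanism being a degenerating family of world lines. If $\operatorname{rank} M(\ell) = 1$, all $H_i$ coincide in a single plane $H$, and any line of $H$ avoiding the finitely many camera centers projects to $\ell$, so $\ell \in \Upsilon_\mathcal{C}(\mathbb{G})$. If $\operatorname{rank} M(\ell) = 2$, the planes $H_i$ meet in a unique line $L = \bigcap_i H_i$. When $L$ avoids every camera center we again have $\ell = \Upsilon_\mathcal{C}(L)$ directly. The essential remaining case is when $L$ contains some centers; these are then collinear on $L$, so $L = E_J$ for the maximal collinear set $J := \{i \mid c_i \in L\}$.

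In this case I would realise $\ell$ as a limit. I look for a smooth quadric $Q$ having $E_J$ in one ruling and tangent plane $H_i$ at each $c_i$ with $i \in J$, and then let $L_t \to E_J$ run through the same ruling of $Q$. Each such $L_t$ is disjoint from $E_J$, so $\operatorname{span}(c_i, L_t)$ is a genuine plane converging to the tangent plane of $Q$ at $c_i$, namely $H_i$; for $i \notin J$ one has $c_i \notin L$ and $\operatorname{span}(c_i, L_t) \to \operatorname{span}(c_i, E_J) = H_i$ automatically. Choosing $t$ so that $L_t$ misses all centers gives $\Upsilon_\mathcal{C}(L_t) \to \ell$, whence $\ell \in \mathcal{L}_\mathcal{C}$. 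The crux is therefore producing the quadric $Q$, and this is exactly where $\ell \in \mathcal{Y}_\mathcal{C}$ enters: when $|J| \le 3$ the interpolation is unobstructed and $\mathcal{Y}_{\mathcal{C},J} = (\PP^2)^m$ is vacuous, matching Lemma \ref{le: inf_trans}; when $|J| \ge 4$ the condition $\ell \in \mathcal{Y}_{\mathcal{C},J}$ supplies a third common transversal of the $F_J(\ell_i)$ beyond $E_J$ and $E_J^*$, so by Lemma \ref{le: nongeneric_schubert} the (generically disjoint) lines $F_J(\ell_i)$ lie on a smooth quadric $Q$, and since $E_J, F_J(\ell_i) \subseteq H_i$ the tangent plane of $Q$ at $c_i$ is exactly $\operatorname{span}(E_J, F_J(\ell_i)) = H_i$.

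I expect the main obstacle to lie in the degenerate sub-configurations of this last step: when some $F_J(\ell_i)$ is a plane (i.e. $E_J^* \subseteq H_i$), when the $F_J(\ell_i)$ fail to be pairwise disjoint, or when the family of transversals is the common-point type isolated in the definition of $\mathcal{Y}_{\mathcal{C},I}$. In those situations the smooth quadric must be replaced by a cone, by a union of planes, or by a pencil of lines through a common point (invoking Lemmas \ref{le: cont_fam_lines} and \ref{le: three lines}), and one must verify uniformly that the corresponding degenerating family $L_t \to E_J$ still has images converging to $\ell$ and that $L_t$ can be chosen off all camera centers. Assembling these subcases into a single clean argument is the part demanding the most care.
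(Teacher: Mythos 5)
Your overall architecture is the same as the paper's. The forward inclusion via a generic $L$, the three distinct transversals $E_I,E_I^*,L$ of the lines $F_I(\ell_i)$, Lemma \ref{le: inf_trans}, and Zariski-closedness of $\mathcal Y_{\mathcal{C},I}$ is precisely the paper's Lemma \ref{le:im_in_Y}; and your quadric degeneration $L_t\to E_J$ with the tangent-plane identification $\operatorname{span}(E_J\cup F_J(\ell_i))=H_i$ is a clean repackaging of the paper's Case~2 (where the paper instead tracks the intersection points $u_n,v_n$ of $L^{(n)}$ with $F_I(\ell_1),F_I(\ell_2)$). But there are concrete gaps. First, the case in which $L$ contains \emph{exactly one} camera center is lost: your identity $L=E_J$ is false for $|J|=1$ (there $E_J$ is not even a line), and your ruling construction needs $E_J\in\mathbb G$; the paper treats this case separately in Lemma \ref{le: family of lines}, via a pencil of lines inside $H_1$ avoiding the centers. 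Second, one of the degeneracies you flag is vacuous: $E_J^*\subseteq H_i$ cannot occur, because $H_i\supseteq L=E_J$, $E_J\cap E_J^*=\emptyset$, and a plane cannot contain two disjoint lines, so every $F_J(\ell_i)$ is automatically a line in this situation — the paper notes this at the start of its case analysis. Third, even for $|J|\le 3$ the interpolation is not always unobstructed as you claim: if $H_i=H_j$ for two indices in $J$, no smooth quadric has equal tangent planes at two distinct points of $E_J$, and one must fall back on the planar family (the paper's Case~1, respectively the $P_1=P_2$ subcase of Case~3).

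The substantive missing piece is the configuration you defer in your final paragraph, and it is not routine bookkeeping: when no three of the $F_J(\ell_i)$, $i\in J$, are pairwise disjoint, the paper (Case~3) shows the $F_J(\ell_i)$ lie in a union of two planes $P_1\cup P_2$ through $L$, and must invoke $\ell\in\mathcal Y_{\mathcal{C},J}$ a \emph{second} time — to exclude that each $P_k$ contains two of the $F_J(\ell_i)$, since otherwise every common transversal other than $E_J$ would have to pass through the two distinct fixed points $a_k\in P_k\cap E_J^*$, contradicting positive-dimensionality of the transversal family. After that it still requires a limiting argument: auxiliary pairwise disjoint lines $G_i^{(n)}\to F_J(\ell_i)$, the unique smooth quadrics $Q^{(n)}$ through three of them (Lemma \ref{le: three lines}), compactness to extract $Q^{(n)}\to P_1\cup P_2$ and to identify $\lim_{n\to\infty} G_i^{(n)}=F_J(\ell_i)$ for the remaining indices, and finally a reduction to the disjoint case. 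Your sketch names the right limit objects (union of two planes, concurrent pencil) but supplies none of these verifications, and in particular never registers that the hypothesis $\ell\in\mathcal Y_{\mathcal C}$ is needed again at this second, essential point. In summary: same route as the paper, forward inclusion and generic backward case complete, but the one-center case and the non-disjoint case — exactly where the paper spends most of its effort — remain unproven.
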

%\begin{remark}
%{Notice that the variety $\mathcal Y$ in Theorem \ref{thm: main2} depends on the camera arrangement $\mathcal C$.}
%\end{remark}
Let us illustrate Theorem \ref{thm: main2} by obtaining explicit equations in $\ell=(\ell_1,\ldots,\ell_4)$ that vanish if and only if $\ell\in\mathcal Y_\mathcal{C}$, in the case of the collinear cameras
$$C_1=\begin{bmatrix}1 & 0 & 0 & 0\\0 & 1 & 0 & 0\\0 & 0 & 1 & 0\end{bmatrix},\quad
C_2=\begin{bmatrix}0 & 1 & 0 & 0\\0 & 0 & 1 & 0\\0 & 0 & 0 & 1\end{bmatrix},\quad
C_3=\begin{bmatrix}1 & 0 & 0 & -1\\0 & 1 & 0 & 0\\0 & 0 & 1 & 0\end{bmatrix},\quad
C_4=\begin{bmatrix}1&  0 & 0 & 1\\0 & 1 & 0 & 0\\0 & 0 & 1 & 0\end{bmatrix}. $$
The camera centers $c_i$ of $C_i$ lie on the baseline $E_I$, $I=\{1,2,3,4\}$, spanned by $c_1=[0:0:0:1]$ and $c_2=[1:0:0:0]$. To determine the multiview variety for this collection of cameras $\mathcal{C}$ we need to calculate the exceptional locus $\mathcal{Y}_{\mathcal{C}}=\mathcal Y_{\mathcal{C},I}$. This variety can be realized in \texttt{Macaulay2} by computing $\mathcal Y_{\mathcal{C},I}$ via variable elimination
as we did above. Recall from~(\ref{def_pluecker}) the Pl\"ucker embedding $\rho:\mathbb G\to \PP(\mathbb C^{4\times 4})$. Let $L\in\mathbb G$ and $L^\perp$ be its dual line as in (\ref{def_dual_lines}). Let us write
$$\rho(L^\perp)=P=\begin{bmatrix}
0 & -p_0 & -p_1 & -p_2\\
p_0 & 0 & -p_3 & -p_4\\
p_1 & p_3 & 0 & -p_5\\
p_2 & p_4 & p_5 & 0
\end{bmatrix}. $$
Thus, $L^\perp$ is represented by Plücker coordinates $p=[p_0:\cdots:p_5]\in \PP^5$. We have that a point $a\in \PP^3$ lies on $L$, if and only if $Pa=0$. We compute two fixed points $f_1,f_2$ that span $E_I^*$. We introduce $8$ variables $t_i,u_i$ for $i=1,\ldots,4$, and set $a_i:=c_i+t_if_1+u_if_2\in  \operatorname{span}(\{c_i\} \cup  E_I^*)$. Adding the equation~$h_i^Ta_i=0$, where $h_i=C_i^T\ell_i$, assures that $L$ intersects $F_I(\ell_i)$ in $a_i$. Moreover, adding a further equation~$1=t_1+u_1$ confirms that we have $a_1\not\in E_I$ and $a_1\not\in E_I^*$, hence $L\neq E_I$ and $L\neq E_I^*$. Therefore we get $\ell\in\mathcal Y_{\mathcal{C},I}$ from the following ideal of polynomials: $\langle Pa_i,h_i^Ta_i\mid i=1,\ldots,4 \rangle +
  \langle t_1+u_1-1\rangle +\langle p^Tp-1, p_0p_5-p_1p_4+p_2p_3 \rangle.
$
Eliminating all variables except $\ell$ we are left with the following principal ideal. For simplicity we write $x=\ell_1,y=\ell_2,z=\ell_3,w=\ell_4$:
$$\langle 2x_3y_2z_2w_2-x_3y_1z_3w_2-x_2y_2z_3w_2 -x_3y_1z_2w_3-x_2y_2z_2w_3+2x_2y_1z_3w_3\rangle.$$
Adding this ideal to the determinantal ideal using \texttt{Macaulay2} we get a prime ideal of dimension~4 as predicted by Theorem \ref{thm: main2}. The code for this example is attached to the \texttt{arXiv} version of this article.

We take this opportunity to highlight some differences compared to the point multiview variety~$\mathcal{M}_{\mathcal C}$. This is the Zariski closure of the image of the map
\begin{equation}\label{def_Phi_C}
\Phi_{\mathcal C}:\PP^3{\dashrightarrow} (\PP^2)^m, \quad x\mapsto (C_1x,\ldots,C_mx).
\end{equation}
As stated in \cite[Lemma 4.1]{agarwal2019ideals}, for any camera arrangement $\mathcal{C}$ of cameras with different centers, we have
$
    \mathcal{M}_{\mathcal C}=\{(x_1,\ldots,x_m)\in (\PP^2)^m \mid \operatorname{rank}A_{\mathcal C}(x)<m+4\},
$
where $A_{\mathcal C}(x)$ is the $3m\times (m+4)$ matrix
$$A_{\mathcal C}(x)=\begin{bmatrix}C_1 & x_1  &\cdots & 0\\
\vdots & \vdots & \ddots & \vdots\\
C_m  & 0 &\cdots & x_m\\\end{bmatrix};$$
that is, $x\in\mathcal M_{\mathcal C}$ if and only if the the maximal minors of the matrix $A_{\mathcal C}(x)$ vanish.   
The geometric interpretation of the point multiview variety is that $x\in \mathcal M_{\mathcal C}$, if and only if their back--projected lines intersect at least in a point. In contrast to our setting, one does not need an equivalence to $\mathcal{Y}_{\mathcal{C}}$.

%%%%%%%%%%%%%%%%%%%%%%%%%%%%%%%%%%%%%%%%%%%%%%%%%%%%%%%%%%%%%%%%%%%%%%%%%%%%%%%%%%%%%%%%%%%%%%%%%%%%%%%

We now prove the results in this section. First, we prove Theorem \ref{thm_dimension}, Theorem \ref{thm_real_dimension}, Proposition~\ref{prop joint image} and Proposition \ref{properties_of_LMV}. Thereafter, we prove Theorem \ref{thm: main1}, and Theorem \ref{thm: main2}.

\subsection{Proofs of basic results}\label{sec: proof basics}

\begin{proof}[Proof of Theorem \ref{thm_dimension}]
The map $\Upsilon_{\mathcal C}: \mathbb G \dashrightarrow (\mathbb P^2)^m$ is a rational map. By Lemma \ref{image_is_irreducible} the Zariski closure $\mathcal{L}_{\mathcal{C}}$ of its image is irreducible. Moreover, Lemma \ref{rational_dimension} implies that $\dim(\mathcal{L}_{\mathcal{C}})\leq 4$. We show that $\mathcal{L}_{\mathcal{C}}$ has dimension at least~4. 
Let $\pi:\mathcal{L}_{\mathcal{C}}\to \PP^2\times\PP^2$ be the projection onto the first two factors. The pair~$(\ell_1,\ell_2)$ lies in $(\pi \circ \Upsilon_{\mathcal{C}})(\mathbb G)$ if and only if there is a line $L\in \mathbb{G}$ that projects onto $\ell_1$ and $\ell_2$ respectively, and that does not pass through any camera center. The set of $(\ell_1,\ell_2)$ such that both back-projected planes $C_1^T\ell_1$ and $C_2^T\ell_2$ intersect a given camera center is a proper closed set. Therefore~$(\pi \circ \Upsilon_{\mathcal{C}})(\mathbb G)$ is non-empty Zariski open in $\PP^2\times \PP^2$. This means 
that $\pi$ is dominant. By Lemma \ref{rational_dimension}, $\dim \mathcal{L}_{\mathcal{C}}\geq \mathrm{dim}(\PP^2\times \PP^2)=4$.
\end{proof}

\begin{proof}[Proof of Theorem \ref{thm_real_dimension}] Recall from Section \ref{sec: Grassmanians} the definition of the real Grassmanian $\mathbb G_{\mathbb R}$. If the camera matrices are real, the image of $\mathbb G_{\mathbb R}$ under the joint camera map is contained in $\mathcal L_{\mathcal C}^{\mathbb R}$, so that $\overline{\Upsilon_{\mathcal C}(\mathbb G_{\mathbb R})}\subseteq \overline{\mathcal L_{\mathcal C}^{\mathbb R}}\subseteq \mathcal L_{\mathcal C}$. Recall that  $\overline{\mathbb G_{\mathbb R}}=\mathbb G$.
Applying Lemma \ref{le: polyzar} yields $\overline{\Upsilon_{\mathcal C}(\mathbb G_{\mathbb R})} = \overline{\Upsilon_{\mathcal C}(\mathbb G)} = \mathcal L_{\mathcal C}$. Therefore, $\overline{\mathcal L_{\mathcal C}^{\mathbb R}} = \mathcal L_{\mathcal C}$.
This proves the first part of Theorem~\ref{thm_real_dimension}. The second part we observe that in the case of real cameras, $\mathcal L_{\mathcal C}$ is defined by real polynomial equations by Theorem \ref{thm: main2}. These real equations define the real algebraic variety $\mathcal L_{\mathcal C}^{\mathbb R}$, whose complexification is $\mathcal L_{\mathcal C}$. The real dimension of $\mathcal L_{\mathcal C}^{\mathbb R}$ is~4 by \cite[Theorem 4.3]{breiding2021algebraic}. The statement follows then from \cite[Theorem 1]{whitney}.
\end{proof}

\begin{proof}[Proof of Proposition~\ref{prop joint image}]
Let $\ell \in \mathcal L_{\mathcal C}$. Recall that $\Upsilon_{\mathcal C}^{-1}(\ell)$ consists of the lines contained in $H_1\cap\cdots \cap H_m$ meeting no centers.
Assume that the back-projected planes $H_i$ of $\ell$ meet in exactly a line $L$ in $\PP^3$. If $L$ does not pass through any camera center, then $\Upsilon_{\mathcal{C}}(L)=\ell$, so $\ell$ lies in the image.
If $L$ does pass through the camera center $c_i$, the joint camera map $\Upsilon_{\mathcal{C}}$ is not defined at $L$, so there is no line in $\PP^3$ that projects to $\ell$. Further, in this case $\ell\in \mathcal{X}$, since $c_i^Th_j=0$ for each $j$. Finally, 
we consider when the back-projected planes meet in a plane. In such a plane, we can find a line that does not pass through any camera center, and therefore all such points must lie in the image.
\end{proof}

\begin{proof}[Proof of Proposition \ref{properties_of_LMV}] We first prove item 1. Let $L\in\mathbb G$ and
$\ell=(\ell_1,\ldots,\ell_m)=\Upsilon_{\mathcal C}(L).$ The preimage $\Upsilon_{\mathcal C}^{-1}(\ell)$ consists of the lines contained in $H_1\cap\cdots \cap H_m$ meeting no centers. We see that $\Upsilon_{\mathcal C}^{-1}(\ell)=\{L\}$ if and only if $H_1\cap\cdots \cap H_m=L$, meaning $\mathrm{rank}\; M(\ell)=2$. Otherwise $\mathrm{rank}\; M(\ell)=1$ and $\Upsilon_{\mathcal C}^{-1}(\ell)$ consists of all lines in the plane $H_1\cap\cdots\cap H_m$ that intersects none of the finitely many camera centers, hence $\Upsilon_{\mathcal C}^{-1}(\ell)$ has infinitely many elements.

Next, we prove item 2. We assume that $\mathcal{C}'=\{C_1,\ldots,C_k\}$ and that $\pi$ projects onto the first~$k$ factors. If $L$ passes through no camera center among $\mathcal{C}$, then it passes no camera center of $\mathcal{C}'$, implying $\pi(\Upsilon_{\mathcal{C}}(\mathbb G))\subseteq \Upsilon_{\mathcal{C}'}(\mathbb G)$. 
Using Lemma \ref{le: polyzar} and that $\pi$ is a closed map \cite[Proposition 7.16]{Gathmann}, we have 
$$\mathcal{L}_{\mathcal{C}'}=\overline{\Upsilon_{\mathcal{C}'}(\mathbb G)}\supseteq\overline{\pi(\Upsilon_{\mathcal{C}}(\mathbb G))} =\overline{\pi(\mathcal{L}_{\mathcal{C}})}=\pi(\mathcal{L}_{\mathcal{C}}).$$
By Theorem \ref{thm_dimension}, $\mathcal{L}_{\mathcal{C}'}$ and $\mathcal{L}_{\mathcal{C}}$ are irreducible and of dimension 4. On the other hand, $\pi(\mathcal{L}_{\mathcal{C}})$ is also irreducible by Lemma \ref{image_is_irreducible}. Moreover, we have a dominant rational map $\pi: \mathcal{L}_{\mathcal{C}}\dashrightarrow \pi(\mathcal{L}_{\mathcal{C}})$ showing by Lemma \ref{rational_dimension} that $\dim \pi(\mathcal{L}_{\mathcal{C}})\le 4$, and as in the proof of Theorem \ref{thm_dimension} a dominant rational map $\pi(\mathcal{L}_{\mathcal{C}}) \dashrightarrow \PP^2\times \PP^2$ showing by Lemma \ref{rational_dimension} that $\dim \pi(\mathcal{L}_{\mathcal{C}})\ge 4$. Finally, $\pi(\mathcal{L}_{\mathcal{C}})= \mathcal{L}_{\mathcal{C}'}$ by Lemma~\ref{lemma_X_equal_to_Y}. 
\end{proof}

\subsection{Proof of main results}\label{proof thm: main2}
For the proof we first introduce some notation. We write
\begin{equation}\label{def_M_C}
\mathcal V_{\mathcal C} := \{\ell \in (\mathbb P^2)^m \mid \mathrm{rank}\, \begin{bmatrix}C_1^T\ell_1 & \cdots & C_m^T\ell_m\end{bmatrix} \leq 2\}
\end{equation} 
The basic idea of the proof is to use the fact that Zariski closure coincides with the Euclidean closure 
of $\Upsilon_{\mathcal{C}}(\mathbb G)$, written $\overline{\Upsilon_{\mathcal{C}}(\mathbb G)}^E$, as previously explained follows from Chevalley's theorem; see \cite[Theorem 4.19]{michalek2021invitation}. 

The essential idea of the proof of Theorem \ref{thm: main2} is to show two inclusions. First, we take a point $\ell\in\mathcal V_{\mathcal C}\cap \mathcal{Y}_{\mathcal{C}}$ and then create sequences in the image $\Upsilon_{\mathcal{C}}(\mathbb G)$ converging to $\ell$ in the Euclidean topology. For the other inclusion we construct the necessary set of one-dimensional lines in the condition of $\mathcal{Y}_{\mathcal{C}}$.

In the following, we fix a point $\ell=(\ell_1,\ldots,\ell_m)\in(\PP^2)^m$ and, as before, denote the back-projected planes~$H_i$ defined by $h_i:=C_i^T\ell_i$. We say that a sequence of planes converges, if their equations (which are points in~$\PP^3$) converge in the Euclidean topology. As we approach the proof of Theorem \ref{thm: main2}, we need three lemmata.
\begin{lemma}\label{le:im_in_Y} The image of the joint camera map is a subset of both the determinantal variety $\mathcal{V}_{\mathcal{C}}$ and the exceptional locus $\mathcal{Y}_{\mathcal{C}}$. In other words, $\Upsilon_{\mathcal{C}}(\mathbb{G})\subseteq \mathcal{V}_{\mathcal{C}}\cap \mathcal Y_\mathcal{C}$.
\end{lemma}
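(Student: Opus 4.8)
The plan is to prove the two containments $\Upsilon_{\mathcal C}(\mathbb G)\subseteq \mathcal V_{\mathcal C}$ and $\Upsilon_{\mathcal C}(\mathbb G)\subseteq \mathcal Y_{\mathcal C}$ separately. Fix a line $L\in\mathbb G$ in the domain of $\Upsilon_{\mathcal C}$ (so $L$ passes through no camera center) and set $\ell=\Upsilon_{\mathcal C}(L)$, with back-projected planes $H_i$ determined by $h_i=C_i^T\ell_i$. The geometric fact I will use throughout is that $L\subseteq H_i$ for every $i$: each point of $L$ projects under $C_i$ to a point of the image line $\ell_i$, which is precisely the defining condition of $H_i$.

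For the first containment I read off the rank bound directly. Since $L\subseteq H_i$ for all $i$, every vector $p$ in the two-dimensional subspace $\hat L\subseteq\mathbb C^4$ satisfies $p^Th_i=0$ for all $i$, i.e. $M(\ell)^Tp=0$. Hence $\hat L\subseteq\ker M(\ell)^T$, so $M(\ell)^T$ has a kernel of dimension at least $2$ and therefore $\operatorname{rank} M(\ell)=\operatorname{rank} M(\ell)^T\leq 2$. This is exactly the statement $\ell\in\mathcal V_{\mathcal C}$.

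For the second containment I must show $\ell\in\mathcal Y_{\mathcal C,I}$ for every index set $I$ with $E_I$ a line; by Lemma~\ref{le: inf_trans} it suffices to exhibit three distinct lines meeting each $F_I(\ell_i)$, $i\in I$. Two common transversals are available for free: $E_I$ meets $F_I(\ell_i)$ at $c_i$, and $E_I^*$ meets $F_I(\ell_i)$ either at the point $H_i\cap E_I^*$ (when $F_I(\ell_i)$ is a line) or by containment (when $F_I(\ell_i)$ is the plane spanned by $c_i$ and $E_I^*$). The third transversal will be $L$ itself. The crucial observation is that $F_I(\ell_i)\subseteq H_i$, since both $c_i$ and $H_i\cap E_I^*$ lie in the plane $H_i$; combined with $L\subseteq H_i$ this forces $L$ and $F_I(\ell_i)$ to meet, because two lines in a common projective plane always intersect, and if $F_I(\ell_i)$ is itself a plane then $L\subseteq H_i=F_I(\ell_i)$. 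Thus $L$ is a common transversal for every $i\in I$.

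It remains to guarantee that $E_I$, $E_I^*$ and $L$ are pairwise distinct, and this is where the single delicate case arises. We have $E_I\neq E_I^*$ because $E_I\cap E_I^*=\emptyset$ for the Hermitian dual, and $L\neq E_I$ because $E_I$ passes through the centers $c_i$ while $L$ lies in the domain of $\Upsilon_{\mathcal C}$. The only obstruction is $L=E_I^*$, which I will handle by a separate argument: if $L=E_I^*$ then $E_I^*=L\subseteq H_i$ for every $i$, so each $F_I(\ell_i)=\operatorname{span}(\{c_i\}\cup E_I^*)=H_i$ is a plane, and since every line in $\PP^3$ meets every plane, every line is a common transversal and $\ell\in\mathcal Y_{\mathcal C,I}$ trivially. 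Intersecting over all admissible $I$ then yields $\ell\in\mathcal Y_{\mathcal C}$. The main point requiring care is keeping the transversal argument uniform across the line-versus-plane alternatives for $F_I(\ell_i)$, and isolating the degenerate case $L=E_I^*$ so that distinctness of the three transversals is never silently assumed.
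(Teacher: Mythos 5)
Your proof is correct, but it takes a genuinely different route from the paper's. The paper does not verify membership in $\mathcal Y_{\mathcal C,I}$ pointwise: it restricts to a Zariski open set $U_1\cap U_2\cap U_3\subseteq\mathbb G$ of lines that avoid all camera centers, avoid both $E_I$ and $E_I^*$ (so distinctness of the three transversals is automatic), and for which the $F_I(\ell_i)$ are pairwise disjoint (so each is a line, never a plane); it then invokes Lemma~\ref{le: polyzar} together with the closedness of $\mathcal Y_{\mathcal C,I}$ to conclude that the full image, and indeed all of $\mathcal L_{\mathcal C}$, lands inside $\mathcal Y_{\mathcal C,I}$. You instead prove the literal pointwise containment for \emph{every} $L$ in the domain of $\Upsilon_{\mathcal C}$, which forces you to confront the degenerate configurations the paper deliberately avoids by genericity: you correctly observe that $F_I(\ell_i)\subseteq H_i$ (via $c_i\in H_i$ and $H_i\cap E_I^*\subseteq H_i$), so $L$ meets $F_I(\ell_i)$ either because two lines in the plane $H_i$ intersect or because $F_I(\ell_i)=H_i\supseteq L$ when it is a plane, and you isolate the one genuine obstruction $L=E_I^*$, where indeed every $F_I(\ell_i)$ becomes the plane $H_i$ and $\bigcap_{i\in I}\Omega_I(\ell_i)=\mathbb G$. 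What each approach buys: the paper's argument is shorter because all degeneracies are swept into the open conditions, at the cost of needing the closure machinery (Lemma~\ref{le: polyzar}) and the prior verification that $\mathcal Y_{\mathcal C,I}$ is Zariski closed; your argument is more elementary and self-contained, needing neither density nor closedness, and yields the stated set containment directly. One small point to tighten: in the mixed case where some but not all $F_I(\ell_i)$ are planes (possible even when $L\neq E_I^*$, whenever $E_I^*\subseteq H_i$ for some $i$), Lemma~\ref{le: inf_trans} as stated applies only to lines; you should note that plane indices contribute $\Omega_I(\ell_i)=\mathbb G$ and can be discarded before applying the lemma to the remaining line indices. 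This is a cosmetic patch --- the paper itself states the ``three distinct common transversals'' characterization of $\mathcal Y_{\mathcal C,I}$ without this caveat --- and does not affect the validity of your proof.
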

\begin{proof} We first show that $\Upsilon_{\mathcal{C}}(\mathbb{G})\subseteq\mathcal{V}_{\mathcal{C}}$. This inclusion follows from the fact that $\ell\in (\PP^2)^m$ lies in the image if and only if there is a line $L\in \mathbb G$ with $\ell=\Upsilon_{\mathcal{C}}(L)$. This happens precisely when the back-projected planes of $\ell$ meet in $L$; the kernel of the matrix $M(\ell)^T$ contains two distinct vectors, meaning it has rank at most 2.

To see that $\Upsilon_{\mathcal{C}}(\mathbb{G})\subseteq\mathcal{Y}_{\mathcal{C}}$, note that if no four cameras are collinear, then $\mathcal{Y}_{\mathcal{C},I}=(\PP^2)^m$ for each set of indices~$I$. 
Now fix a maximal set of four or more collinear cameras $I$, we find a non-empty open subset $U\subseteq \mathbb G$ such that $\Upsilon_{\mathcal{C}}(U)\subseteq \mathcal{Y}_{\mathcal{C},I}$. This is enough by Lemma \ref{le: polyzar}. 

Let $U_1\subseteq \mathbb G$ be the open set of lines not meeting any camera center. For $L\in U_1$ let $\ell=\Upsilon_{\mathcal C}(L)$ and consider $F_I(\ell_i)$. Let $U_2\subseteq \mathbb G$ be the Zariski open set, where the $F_I(\ell_i)$ do not intersect. By construction,~$F_I(\ell_i)$ meets both $E_I$ and $E_I^*$. We write $U_3$ for the Zariski open set of lines that intersects neither $E_I$ nor $E_I^*$.
We next argue that a line $L\in U_1\cap U_2\cap U_3$ intersects each $F_I(\ell_i)$ { for $\ell=\Upsilon_{\mathcal{C}}(L)$}. This is because both $L$ and $F_I(\ell_i)$ lie in the plane $H_i$, where $H_i$ is the back-projected plane of $\ell_i$, and so must have an intersection point. Now we have three distinct lines in $\bigcap_{i\in I}\Omega_I(\ell_i)$, namely $E_I,E_I^*$ and~$L$.
According to Lemma~\ref{le: inf_trans}, there are infinitely many lines intersecting each~$F_I(\ell_i)$. This implies that $\bigcap_{i\in I}\Omega_I(\ell_i)$ contains a one-dimensional family of lines. Letting $U=U_1\cap U_2\cap U_3$, we are done.
\end{proof}

\begin{lemma}\label{le: family of lines} 
Let $\ell=(\ell_1,\ldots,\ell_m)\in (\PP^2)^m$. If the back-projected planes $H_1,\ldots,H_m$ of $\ell$ intersect in exactly a line $L$ that goes through only one camera center, then $\ell\in\overline{\Upsilon_{\mathcal{C}}(\mathbb G)}^\mathrm{E}$.
\end{lemma}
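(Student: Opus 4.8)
The plan is to exhibit a sequence of lines in $\mathbb G$ lying in the domain of $\Upsilon_{\mathcal C}$ whose images converge to $\ell$ in the Euclidean topology. After relabeling, say the line $L$ passes through the single camera center $c_j$, so that $\Upsilon_{\mathcal C}$ is undefined at $L$ itself; the whole point is to approximate $L$ by nearby lines on which the map \emph{is} defined. Since $L=H_1\cap\cdots\cap H_m$ we have $L\subseteq H_j$, and because $C_jc_j=0$ we also have $c_j^Th_j=(C_jc_j)^T\ell_j=0$, i.e.\ $c_j\in H_j$. The key idea is to keep the approximating lines \emph{inside} the plane $H_j$: any line contained in $H_j$ projects under $C_j$ into the image line $\ell_j$, so its $j$-th coordinate will be \emph{exactly} $\ell_j$, while continuity will take care of the remaining cameras.

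First I would construct the sequence. Working in the plane $H_j\cong\PP^2$ and its dual $(\PP^2)^\vee$, which parametrizes the lines of $H_j$, each camera center $c_i$ that happens to lie in $H_j$ determines a line $D_i$ in this dual plane, namely the pencil of lines of $H_j$ through $c_i$; there are finitely many such $D_i$. By hypothesis $L$ passes through $c_j$ only, so as a point of the dual plane $L$ lies on $D_j$ and on no other $D_i$. Since $(\PP^2)^\vee\setminus D_j$ is dense and the remaining $D_i$ are closed and miss $L$, I can choose a sequence $L_n\to L$ in the dual plane with $L_n\notin D_j$ and, for $n$ large, $L_n\notin D_i$ for all $i$. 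Thus each $L_n$ is a line of $H_j$ avoiding every camera center contained in $H_j$; and since $L_n\subseteq H_j$, it automatically avoids every camera center outside $H_j$ as well. Hence $L_n$ lies in the domain of $\Upsilon_{\mathcal C}$.

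Next I would verify convergence of the images coordinate by coordinate. For the $j$-th camera: because $L_n\subseteq H_j$, and $p\in H_j$ exactly when $C_jp$ lies on $\ell_j$, the image $C_j\cdot L_n$ is a genuine line (as $L_n$ misses $c_j$) contained in $\ell_j$, hence equals $\ell_j$ for every $n$. For $i\neq j$: the line $L$ does not pass through $c_i$, and the camera-line map $L'\mapsto C_i\cdot L'$, whose image line is read off as the kernel of the skew-symmetric matrix $C_i\rho(L')C_i^T$, is regular and hence Euclidean-continuous on the open locus of lines missing $c_i$. Since $L\subseteq H_i$ and $L$ misses $c_i$ we have $C_i\cdot L=\ell_i$, so $C_i\cdot L_n\to\ell_i$. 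Therefore $\Upsilon_{\mathcal C}(L_n)\to(\ell_1,\ldots,\ell_m)=\ell$ in the product Euclidean topology, which gives $\ell\in\overline{\Upsilon_{\mathcal C}(\mathbb G)}^{\mathrm E}$.

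The main obstacle, and precisely the place where the single-center hypothesis is indispensable, is maintaining correct behaviour at every camera simultaneously. Staying inside $H_j$ pins the $j$-th coordinate exactly, but for each other camera I must rely on continuity of the projection at $L$, which holds only because $L$ avoids $c_i$ for $i\neq j$. Were $L$ to pass through two centers $c_j,c_k$, no single back-projected plane could fix both coordinates (the planes meet only along $L$), and the map $L'\mapsto C_k\cdot L'$ would be discontinuous at $L$, so the images would not be forced to converge to $\ell_k$. This explains both why the lemma restricts to a line through exactly one center and why confining the approximating sequence to $H_j$ is the right device.
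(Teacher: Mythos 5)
Your proof is correct and takes essentially the same approach as the paper: approximate $L$ by a sequence of lines inside the distinguished back-projected plane $H_j$ that avoid all camera centers, so that the $j$-th image coordinate is exactly $\ell_j$ for every term, while continuity of the camera-line maps at $L$ (which misses $c_i$ for $i\neq j$) forces the remaining coordinates to converge. The only cosmetic differences are that you build the sequence via the dual plane of $H_j$ where the paper cites closedness and lower dimension of the Schubert locus of lines in $H_1$ meeting a center, and that you invoke Euclidean continuity of $L'\mapsto C_i\cdot L'$ directly where the paper tracks convergence of the spanned planes $\operatorname{span}(\{c_i\}\cup L^{(n)})\to H_i$.
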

\begin{proof}
We can assume without loss of generality that $c_1$ is the unique camera center contained in $L$.
Consider a sequence of lines $L^{(n)}$ in the plane $H_1$ that do not cross any camera center, and that tends toward $L=H_1 \cap \cdots \cap H_m$. Such a sequence exists, because the Schubert variety of lines in $H_1$ meeting (at least) one of the camera centers is closed and of lower dimension. 
For every~$n$, let
$$\ell^{(n)}=(\ell_1^{(n)},\ldots,\ell_m^{(n)}):=\Upsilon_{\mathcal C}(L^{(n)})$$
and let $H_i^{(n)}$ be the back-projected plane of $\ell_i^{(n)}$. We have $L^{(n)}\subset H_1^{(n)}\cap\cdots\cap H_m^{(n)}$. For every~$i$ and every $n$, the plane $H_i^{(n)}$ is spanned by the camera center $c_i$ and~$L^{(n)}$. Since for $i>1$ we have $c_i\not\in L$, this implies that $H_i^{(n)}$ tends to the plane spanned by $c_i$ and $L$, which is precisely $H_i$. Consequently,~$\ell_i^{(n)}\to \ell_i$ for $i>1$.
For $i=1$ we use that the plane $H_1^{(n)}$ is spanned by $c_1$ and $L^{(n)}$, and that the latter is a line contained in~$H_1$.
Therefore, $H_1^{(n)}=H_1$ for every $n$, and hence $\ell_1^{(n)}= \ell_1$, because the map that sends back-projected planes to lines in $\PP^2$ is continuous. This shows that $\ell^{(n)}\to \ell$.
\end{proof}

\begin{figure}
    \begin{center}
\begin{tikzpicture}[scale=0.6]
\draw[gray, thick] (-3,1) -- (12,-4);
\draw[red, thick] (-3,-3) -- (2,2);
\draw[red, thick] (2,-4) -- (6,2);

\draw (9,-3) node[anchor=south]{$E_I$};
\draw (7.5,1.1) node[anchor=south]{$L^{(n)}$};
\draw[teal, thick] (-2,1) -- (11,1.2);

\node (c1) at (-3+0.2*15,1-0.2*5){};
\node (c2) at (-3+0.455*15,1-0.455*5){};
\node (un) at (-2+0.234*13,1+0.234*0.2){};
\node (vn) at (-2+0.57*13,1+0.57*0.2){};

\filldraw[black] (c1) circle (2pt);
\filldraw[black] (c2) circle (2pt);
\draw (c1)node[anchor=south east]{$c_1$};
\draw (c2)node[anchor=north west]{$c_2$};
\filldraw[black] (un) circle (2pt);
\filldraw[black] (vn) circle (2pt);
\draw (un)node[anchor=south east]{$u_n$};
\draw (vn) node[anchor=south east]{$v_n$};

\filldraw[fill=gray!20] (c1.center) -- (c2.center) -- (un.center) -- cycle;
\filldraw[fill=blue!20] (c1.center) -- (vn.center) -- (un.center) -- cycle;
\draw[black, dotted] (c2.center) -- (un.center);

\node[anchor=west] (source) at (0.426*13+0.2,1+0.426*0.2){};
\node (destination) at (-1+0.371*15-0.15,1-0.371*5-0.6){};
\draw[->, dotted, thick](source)--(destination);
\end{tikzpicture}
\end{center}
\caption{\label{fig2} A cartoon of the proof strategy for Theorem \ref{thm: main2}. The green line 
represents $L^{(n)}$, which approaches the black baseline $E_I$. It intersects the first red line $F_I(\ell_1)$ in $u_n$ and the second red line $F_I(\ell_1)$ in $v_n$.
The grey plane $H_1$ is spanned by $c_1,u_n$ and $c_2$, while the blue plane $H_1^{(n)}$ is spanned by $c_1,u_n$ and $v_n$. Since $v_n\to c_2$, we have $H_1^{(n)}\to H_1$, which translates into $\ell_1^{(n)}\to \ell_1$.}
\end{figure}
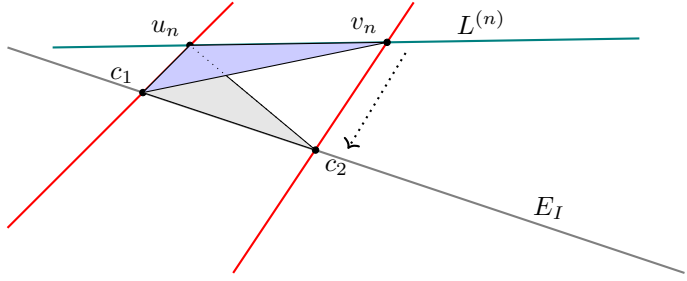
%%%%%%%%%%%%%%%%%%%%%%%%%%%%%%%%%%%%%%%%%%%%%%%%%%%%%%%%%%%%%%%%%%%%%%%%%%%%%%%%%%%%%%%%%%%%%%%%%%%%%%%%%%%%%%%%%%%%%%%%%%%%%%%%%

We are now equipped with everything we need to prove Theorem \ref{thm: main2}.

\begin{proof}[Proof of Theorem \ref{thm: main2}]
{We equivalently show that
\begin{equation}\label{eq1: main2}
\overline{\Upsilon_{\mathcal C}(\mathbb G)}^\mathrm{E} = \mathcal V_{\mathcal C}\cap \mathcal Y_\mathcal{C},
\end{equation}
since the Euclidean closure of $\Upsilon_{\mathcal C}(\mathbb G)$ is equal to $\mathcal L_{\mathcal C}$ by Chevalley's theorem as pointed out previously. We first show the inclusion from left to right in (\ref{eq1: main2}). We have $\Upsilon_{\mathcal C}(\mathbb G) \subseteq \mathcal V_{\mathcal C}\cap \mathcal Y_\mathcal{C}$ by Lemma \ref{le:im_in_Y}, which implies $\overline{\Upsilon_{\mathcal C}(\mathbb G)}^\mathrm{E} \subseteq \mathcal V_{\mathcal C}\cap \mathcal Y_\mathcal{C}$, since $  \mathcal V_{\mathcal C}\cap \mathcal Y_\mathcal{C}$ is closed in the Euclidean topology. 

Next, we show the inclusion from right to left in (\ref{eq1: main2}). Take $\ell\in\mathcal V_{\mathcal C}\cap \mathcal Y_\mathcal{C} $, we prove that $\ell \in \overline{\Upsilon_{\mathcal{C}}(\mathbb G)}^\mathrm{E}$. Since $\ell\in \mathcal V_{\mathcal C}$, the back-projected planes $H_i$ of $\ell_i$ must meet in at least a line~$L$. If there is such a line $L$ that contains no camera center, then $\Upsilon_{\mathcal C}$ is defined at~$L$ and we have $\ell=\Upsilon_{\mathcal C}(L)\in \overline{\Upsilon_{\mathcal{C}}(\mathbb G)}^\mathrm{E}$. If there exists a line in the intersection of the back-projected planes that contains exactly one camera center, Lemma \ref{le: family of lines} tells us that $\ell\in \overline{\Upsilon_{\mathcal{C}}(\mathbb G)}^\mathrm{E}$. 

Otherwise, the back-projected planes $H_1,\ldots,H_m$ meet in exactly a line $L$ that contains at least two camera centers. We now use the fact that $\ell\in \mathcal{Y}_\mathcal{C}$ to prove that $\ell \in \overline{\Upsilon_{\mathcal{C}}(\mathbb G)}^\mathrm{E}$. Let $I\subseteq \{1,\ldots,m\}$ be the indices of cameras whose centers lie on~$L$. Note that each $F_I(\ell_i)$ is a line, because $L$ does not meet $E_I^*$. We look at three separate cases. 

\emph{Case 1:} If $|I|=2$, then, assuming without restriction that $I=\{1,2\}$, we can construct a sequence of lines $L^{(n)}=\mathrm{span}\{u_n,v_n\}$, $u_n\in F_I(\ell_1),v_n\in F_I(\ell_2)$ meeting no center such that $u_n\to c_1,v_n\to c_2$. %and then the sequence $\ell^{(n)}:=\Upsilon_{\mathcal{C}}(L^{(n)})$ tends to $\ell$. 
Consider the sequence $\ell^{(n)}:=\Upsilon_{\mathcal{C}}(L^{(n)})$ and denote by~$H_i^{(n)}$ the back-projected plane of $\ell_i^{(n)}$. We have to show that~$H_i^{(n)}\to H_i$. The plane $H_i^{(n)}$ is spanned by the camera center $c_i$ and~$L^{(n)}$. Further, for $i\not\in I$ we have $c_i\not\in L$, which implies that $H_i^{(n)}\to \operatorname{span}(\{c_i\}\cup L) = H_i$. Note that the map which takes back-projected planes to lines in $\PP^2$ is continuous. 
Consequently, we have $\ell_i^{(n)}\to \ell_i$ for every $i\not\in I$. It remains to discuss the case $i\in I$. Without restriction we can assume that $i=1$. Note that $u_n,c_1,v_n$ are three distinct points that span $H_1^{(n)}$, because $L^{(n)}$ does not meet $c_1$. On the other hand, $H_1$ is spanned by $u_n,c_1,c_2$ for any $n$. Now observe that the line $F_I(\ell_1)$ is spanned by $c_1,u_n$ for any $n$. Consequently, the plane $H_1^{(n)}$ is also spanned by $u_1,c_1,v_n$, and $H_1$ is spanned by $u_1,c_1,c_2$. Since $v_n\to c_2$ this shows that $H_1^{(n)}\to H_1$, and so $\ell_1^{(n)}\to \ell_1$.  

\emph{Case 2:} $|I|\ge 3$ and three of $F_I(\ell_i),i\in I,$ are disjoint lines. Since $\ell\in \mathcal Y_{\mathcal{C},I}${,} we have by definition of $\mathcal Y_{\mathcal{C},I}$ that there is a one-dimensional family of lines through each $F_I(\ell_i),i\in I$. By Lemma \ref{le: three lines} this family sits in a smooth unique quadric. And Lemma \ref{le: cont_fam_lines} says that all $F_I(\ell_i),i\in I,$ are disjoint and there is a one-dimensional family of lines~$L^{(n)}$ in this quadric continuously approaching $L$.  By Lemma \ref{le: cont_fam_lines}, there are exactly two lines in the smooth quadric meeting a center $c_i$ for~$i\in I$. So by taking a subsequence, we may assume $L^{(n)}$ meets no camera center. We set 
$\ell^{(n)}:=\Upsilon_{\mathcal{C}}(L^{(n)})$. Denote by~$H_i^{(n)}$ the back-projected plane of $\ell_i^{(n)}$. We have to show that~$H_i^{(n)}\to H_i$. As in Case 1, we have $\ell_i^{(n)}\to \ell_i$ for every $i\not\in I$. It remains to discuss the case $i\in I$. Without restriction we can assume that $i=1$ and that $c_1,c_2\in L$. We have $L^{(n)}\in \Omega_I(\ell_1)$ and $L^{(n)}\in \Omega_I(\ell_2)$. Since the $F_I(\ell_i)$ are disjoint lines, none of them is equal to $L^{(n)}$ (for any fixed $n$), and this implies that the line $L^{(n)}$ meets $F_I(\ell_1)$ in a unique point $u_n$ and it meets $F_I(\ell_2)$ in a unique point $v_n$ (depicted in Figure \ref{fig2}). Moreover, we have $u_n\neq v_n$ since the $F_I(\ell_i)$ are disjoint, so that $u_n,c_1,v_n$ are three distinct points that span $H_1^{(n)}$. Analogously to Case 1, $\ell_1^{(n)}\to \ell_1$.

%The plane $H_i^{(n)}$ is spanned by the camera center $c_i$ and~$L^{(n)}$. Further, for $i\not\in I$ we have $c_i\not\in L$, which implies that $H_i^{(n)}\to \operatorname{span}(\{c_i\}\cup L) = H_i$. Note that the map which takes back-projected planes to lines in $\PP^2$ is continuous. 
%Consequently, we have $\ell_i^{(n)}\to \ell_i$ for every $i\not\in I$. It remains to discuss the case $i\in I$. Without restriction we can assume that $i=1$ and that $c_1,c_2\in L$. We have $L^{(n)}\in \Omega_I(\ell_1)$ and $L^{(n)}\in \Omega_I(\ell_2)$. Since the $F_I(\ell_i)$ are disjoint lines, none of them is equal to $L^{(n)}$ (for any fixed $n$), and this implies that the line $L^{(n)}$ meets $F_I(\ell_1)$ in a unique point $u_n$ and it meets $F_I(\ell_2)$ in a unique point $v_n$ (depicted in Figure \ref{fig2}). Moreover, we have $u_n\neq v_n$ since the $F_I(\ell_i)$ are disjoint, so that $u_n,c_1,v_n$ are three distinct points that span $H_1^{(n)}$. On the other hand, $H_1$ is spanned by $u_n,c_1,c_2$ for any $n$. Now observe that the line $F_I(\ell_1)$ is spanned by $c_1,u_n$ for any $n$. Consequently, the plane $H_1^{(n)}$ is also spanned by $u_1,c_1,v_n$, and $H_1$ is spanned by $u_1,c_1,c_2$. Since $v_n\to c_2$ this shows that $H_1^{(n)}\to H_1$, and so $\ell_1^{(n)}\to \ell_1$.   

\emph{Case 3:} $|I|\ge 3$ and no three of $F_I(\ell_i),i\in I,$ are disjoint. Then the lines $F_I(\ell_i), i\in I,$ lie in a union $P_1\cup P_2$ of two planes with $L\subset P_1$ and $L\subset P_2$, which we now argue for. Note that either all $F_I(\ell_i)$ are contained in one plane, in which case all $F_I(\ell_i)$ intersect each other, or there are two disjoint $F_I(\ell_i)$, say for indices $i_1,i_2$. Let $P_1=\mathrm{span}\{L,F_I(\ell_{i_1})\}$ and $P_2=\mathrm{span}\{L,F_I(\ell_{i_2})\}$. Now any $F_I(\ell_i)$ lies in either $P_1$ or $P_2$. This is because no three $F_I(\ell_i)$ are disjoint, so $F_I(\ell_i)$ must meet one of $F_I(\ell_{i_1})$ and $F_I(\ell_{i_2})$ (and this intersection is outside $c_i$). 

If $P_1=P_2$, meaning all $F_I(\ell_i),i\in I,$ lie in a plane, then any line in this plane meets each $F_I(\ell_i)$. We can choose a sequence of lines~$L^{(n)}$ in this plane meeting no center, and approaching~$L$. The argument showing that $\ell^{(n)}:=\Upsilon_{\mathcal{C}}(L^{(n)})$ tends to $\ell$ is analogous to Case 1. 

In the case that $P_1\neq P_2$, we first show by contradiction that all lines $F_I(\ell_i), i\in I,$ except for exactly one are contained in the same plane. Suppose that $F_I(\ell_1),F_I(\ell_2)\subset P_1$ and $F_I(\ell_3),F_I(\ell_4)\subset P_2$. Then $F_I(\ell_1),F_I(\ell_2)$ meet in a point $a_1\in P_1$ and $F_I(\ell_3),F_I(\ell_4)$ meet in a point $a_2\in P_2$. Notice that both $a_1,a_2$ lie on $E_I^*$ (for instance $\mathrm{span}\{F_I(\ell_1),F_I(\ell_2)\}$ is a plane containing $E_I$ so it meets $E_I^*$ in exactly a point). Observe that $a_1\neq a_2$, since otherwise $P_1,P_2$ would have $E_I$ in common and an additional point, implying~$P_1=P_2$. Any line distinct from $E_I$ which intersects all the $F_I(\ell_i)$ must then contain both $a_1$ and $a_2$. Consequently, there is only a single such line through both $a_1$ and $a_2$, but this contradicts $\ell\in \mathcal Y_{\mathcal{C},I}$. 

Therefore, without restriction there is exactly one of the $F_I(\ell_i),i\in I,$ contained in the plane $P_1$. After relabeling this line is $F_I(\ell_{1})$. Consider two more indices in $I$, which we can assume to be $2,3\in I$. Then, $F_I(\ell_{2}), F_I(\ell_{3})\subset P_2$. Furthermore, consider three sequences of disjoint lines $G_{i}^{(n)}, i=1,2,3$, that meet $c_{i}$ and~$E_I^*$, and such that $G_{i}^{(n)}\to F_I(\ell_i)$. By Lemma \ref{le: three lines}, for a fixed $n$ the lines $G_{1}^{(n)},G_{2}^{(n)},G_{3}^{(n)}$ determine a smooth quadric $Q^{(n)}$. There is a subsequence of $Q^{(n)}$ that converges because the set of projective quadrics is compact, and for this subsequence $\lim_{n\to\infty} Q^{(n)} = P_1\cup P_2$, for some plane $P_1$ containing $F_I(\ell_1)$ and some plane $P_2$ containing $F_I(\ell_{2}), F_I(\ell_{3})$. Notice that $c_1,c_2,c_3\in Q^{(n)}$ for every $n$, so the whole line through $c_1,c_2,c_3$ is contained in $Q^{(n)}$, which implies that $c_i\in Q^{(n)}$ for all $i\in I$. For the other lines $F_I(\ell_i)$ with $i\in I$ we get sequences
$$G_i^{(n)}:= \text{the unique line in $Q^{(n)}$ through $c_i$ meeting $E_I^*$}.$$
The set of lines through $c_i$ meeting $E_I^*$ is compact so there is a subsequence such that each $G_i^{(n)}$ converges, say to $G_i$. We consider this subsequence. We must have then that the limit $G_i$ lies in either $P_1$ or $P_2$. We show that $G_i= F_I(\ell_i)$ for each $i\in I$. 

For every $n$ there is a one-dimensional family of lines in $Q^{(n)}$, each meeting every $G_i^{(n)}$. This must also hold true in the limit $P_1\cup P_2$. Notice $\lim_{n\to \infty} G_1^{(n)} = F_I(\ell_1)\subseteq  P_1$. If there were another sequence of lines whose limit $G_i$ is in $P_1$, then we can argue as above that there is no one-dimensional family in $P_1\cup  P_2$ meeting every $G_i$. So,  $\lim_{n\to \infty} G_i^{(n)} \subseteq  P_2$ for $i\in I\setminus\{1\}$.
But $P_2$ meets $E_I^*$ in a unique point $a$. Therefore, all $F_I(\ell_i)$ with $i\in I\setminus\{1\}$ meet $E_I^*$ in $a$. We also have $c_i\in F_I(\ell_i)$ by construction. Further, each $G_i^{(n)}$ meets $E_I^*$. Therefore, the limit of $G_i^{(n)}$ also meets $E_I^*$ and it lies in $P_2$, so $\lim_{n\to\infty} G_i^{(n)}$ meets $a$. Then, for $i\in I$ both $\lim_{n\to\infty} G_i^{(n)}$ and $ F_I(\ell_i)$ contain both $a$ and $c_i$, so they are equal. Now, we define the sequence of back-projected planes
$$H_i^{(n)} := \mathrm{span}(L\cup G_i^{(n)})\text{ for }i \in I \quad \text{ and } \quad  H_i^{(n)} := \mathrm{span}(L\cup \{c_i\})\text{ for }i \not\in I.$$
Let $\ell^{(n)}$ correspond to these back-projected planes. Note that $\ell^{(n)}\to \ell$, because $H_i^{(n)}\to H_i$, since we have $G_i^{(n)}\to F_I(\ell_i)$. Finally, $G_1^{(n)}, G_2^{(n)}, G_3^{(n)}$ are disjoint by construction, so we are now in Case 2 and $\ell^{(n)}\in \overline{\Upsilon_{\mathcal{C}}(\mathbb G)}^\mathrm{E}$ for every $n$, which also shows $\ell\in\overline{\Upsilon_{\mathcal{C}}(\mathbb G)}^\mathrm{E}$.}
\end{proof}

% \begin{proof}[Proof of Theorem \ref{thm: main1}] By Theorem \ref{thm: main2} we have $\mathcal L_{\mathcal C} = \mathcal V_{\mathcal C}\cap \mathcal Y$. Assume first that no four cameras are collinear. Then for each collection of indices of collinear cameras $I$ we have $\mathcal Y_I = (\PP^2)^m$. Therefore, $\mathcal Y = \bigcap_{I} \mathcal{Y}_I = (\PP^2)^m$, which shows one direction. For the other direction, we assume that there exist indices $I=\{1,\ldots,4\}$ of collinear cameras. For general $\ell=(\ell_1,\ldots,\ell_m)\in \mathcal{V}_{\mathcal{C}}$ the~$F_I(\ell_i)$ are disjoint lines. By Lemma \ref{le: three lines}, 
% the first three lines $F_I(\ell_1),F_I(\ell_2),F_I(\ell_3)$ lie on a unique smooth quadric~$Q$. Since $Q$ is smooth, it does not contain any planes. Therefore, general points on $\mathrm{span}(\{c_4\}\cap E_I^*)$ do not lie on $Q$. This implies that the line $F_I(\ell_4)$, which is general in $\mathrm{span}(\{c_4\}\cap E_I^*)$, is not contained in $Q$. Lemma \ref{le: nongeneric_schubert} implies that $\bigcap_{i\in I} \Omega(\ell_i)$ is finite. Hence, $\ell\not\in\mathcal Y$.

% For the last statement, Theorem \ref{thm_dimension} and Lemma \ref{lemma_X_equal_to_Y} imply that $\mathcal V_{\mathcal C}$ is irreducible and of dimension~4 if and only if $\mathcal L_{\mathcal C}=\mathcal V_{\mathcal C}$, since $\mathcal L_{\mathcal C}\subseteq\mathcal V_{\mathcal C}$ by Theorem \ref{thm: main2}.
% \end{proof}

\begin{proof}[Proof of Theorem \ref{thm: main1}] By Theorem \ref{thm: main2} we have $\mathcal L_{\mathcal C} = \mathcal V_{\mathcal C}\cap \mathcal Y_\mathcal{C}$. Assume first that no four cameras are collinear. Then for each collection of indices of collinear cameras $I$ we have $\mathcal Y_{\mathcal{C},I} = (\PP^2)^m$. Therefore, $\mathcal Y_\mathcal{C} = \bigcap_{I} \mathcal{Y}_{\mathcal{C},I} = (\PP^2)^m$, which shows one direction. For the other direction, we assume that there exist indices $I=\{1,\ldots,4\}$ of collinear cameras. For general $\ell=(\ell_1,\ldots,\ell_m)\in \mathcal{V}_{\mathcal{C}}$ the~$F_I(\ell_i)$ are disjoint lines. By Lemma \ref{le: three lines}, 
the first three lines $F_I(\ell_1),F_I(\ell_2),F_I(\ell_3)$ lie on a unique smooth quadric~$Q$. Since $Q$ is smooth, it does not contain any planes. Therefore, general points on $\mathrm{span}(\{c_4\}\cap E_I^*)$ do not lie on $Q$. This implies that the line $F_I(\ell_4)$, which is general in $\mathrm{span}(\{c_4\}\cap E_I^*)$, is not contained in $Q$. Lemma \ref{le: nongeneric_schubert} implies that $\bigcap_{i\in I} \Omega(\ell_i)$ is finite. Hence, $\ell\not\in\mathcal Y_\mathcal{C}$.

For the last statement, Theorem \ref{thm_dimension} and Lemma \ref{lemma_X_equal_to_Y} imply that $\mathcal V_{\mathcal C}$ is irreducible and of dimension~4 if and only if $\mathcal L_{\mathcal C}=\mathcal V_{\mathcal C}$, since $\mathcal L_{\mathcal C}\subseteq\mathcal V_{\mathcal C}$ by Theorem \ref{thm: main2}.
\end{proof}

\section{Smoothness} The goal of this section is to prove the following characterization of the smooth locus of the line multiview variety for general cameras.
\begin{theorem}\label{thm:smoothness}
Let $m\geq 3$ and assume no {three} centers are collinear. Then, the singular locus of the line multiview variety is $$\mathcal{L}_{\mathcal{C}}^\mathrm{sing}= \{\ell \in \mathcal{L}_{\mathcal{C}} \mid \mathrm{rank}\ M(\ell) = 1\}.$$
\end{theorem}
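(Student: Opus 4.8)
Throughout I use that, since no three centers are collinear, no four are collinear, so Theorem \ref{thm: main1} gives $\mathcal{L}_{\mathcal{C}}=\mathcal{V}_{\mathcal{C}}=\{\ell\mid \operatorname{rank} M(\ell)\le 2\}$, an irreducible fourfold. The plan is to prove the two inclusions separately: every rank-$2$ point is smooth, and every rank-$1$ point is singular. A preliminary observation organizes everything. A point has $\operatorname{rank} M(\ell)=1$ exactly when all back-projected planes coincide, i.e.\ the columns $C_i^T\ell_i$ are all parallel to a common $h\in\PP^3$; such an $h$ must lie in $\bigcap_i \operatorname{im}(C_i^T)=(\operatorname{span}\{c_i\})^\perp$. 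Since no three centers are collinear, $\operatorname{span}\{c_i\}$ has dimension $3$ (centers coplanar) or $4$ (otherwise), so the rank-$1$ locus is either a single point $\ell^0$ or empty. This already makes the statement consistent with smoothness for $m>3$ in general position.

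For the rank-$2$ points I would bypass the unknown ideal of $\mathcal{L}_{\mathcal{C}}$ by a transversality argument. Write $\mu\colon(\mathbb{C}^3)^m\to\mathbb{C}^{4\times m}$, $\ell\mapsto M(\ell)$; this is an injective linear map (each $C_i^T$ is injective) with image a linear space $\Lambda$, and $\mathcal{V}_{\mathcal{C}}=\mu^{-1}(\mathcal D)$ is the preimage of the generic determinantal variety $\mathcal D=\{N\in\mathbb{C}^{4\times m}\mid \operatorname{rank} N\le 2\}$. At a rank-$2$ matrix $M^0=M(\ell^0)$ the variety $\mathcal D$ is a smooth point, with $T_{M^0}\mathcal D=\{N\mid N(\ker M^0)\subseteq \operatorname{col} M^0\}$, whose conormal space is $(\operatorname{col} M^0)^\perp\otimes\ker M^0$; here $(\operatorname{col} M^0)^\perp$ is the two-plane spanned by the intersection line $L^0$. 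On the other side, $\Lambda^\perp$ consists of the matrices whose $i$-th column is a multiple of $c_i$. A short computation shows that a nonzero element common to these two conormal spaces would force a point of $L^0$ to coincide with a camera center while supporting a nontrivial kernel relation of $M^0$, which is impossible for distinct centers. Hence $\mu$ meets $\mathcal D$ transversally at every rank-$2$ point, so the scheme cut out by the pulled-back $3\times3$ minors is smooth—in particular reduced—of the expected dimension $m+4$ there, and therefore coincides with $\mathcal{L}_{\mathcal{C}}$ locally and is smooth at every rank-$2$ point.

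For the rank-$1$ point $\ell^0$ (when it exists) I would invoke van der Waerden's purity theorem. Let $\Gamma\subseteq\mathbb{G}\times\mathcal{L}_{\mathcal{C}}$ be the closure of the graph of $\Upsilon_{\mathcal{C}}$, an irreducible fourfold, and let $p\colon\Gamma\to\mathcal{L}_{\mathcal{C}}$ be the everywhere-defined birational projection. By the previous paragraph $p$ is an isomorphism over the rank-$2$ locus, so its non-isomorphism locus $E$ is contained in the fiber $p^{-1}(\ell^0)$. I would then compute this fiber: if $L^{(n)}\to L$ with $\Upsilon_{\mathcal{C}}(L^{(n)})\to\ell^0$, then for any center $c_i\notin L$ (such $i$ exists, as at most two centers lie on a line) the plane $\operatorname{span}(c_i,L^{(n)})$ converges to the common plane $H$, forcing $L\subseteq H$; conversely every $L\subseteq H$ occurs. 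Thus $p^{-1}(\ell^0)=\{L\in\mathbb{G}\mid L\subseteq H\}$ is two-dimensional, so $E$ is nonempty of codimension $2$ in $\Gamma$. Were $\mathcal{L}_{\mathcal{C}}$ smooth at $\ell^0$—hence, by the rank-$2$ case, smooth everywhere—purity (Theorem \ref{purity_thm}) would force $E$ to be empty or of pure codimension $1$, a contradiction. Hence $\ell^0$ is singular, completing the characterization.

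The step I expect to be most delicate is the rank-$1$ singularity. One must set up $\Gamma$ so that $p$ is a genuine isomorphism over \emph{all} rank-$2$ points (this is why establishing transversality, rather than mere pointwise smoothness, in the second step is convenient), and one must pin the fiber $p^{-1}(\ell^0)$ to dimension exactly $2$, so that its codimension is $2$ and not $1$. I would stress that both the transversality computation and this fiber computation use only the set-theoretic/scheme description of $\mathcal{L}_{\mathcal{C}}$, which is precisely what lets the argument succeed without knowing the defining ideal.
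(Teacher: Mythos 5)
Your proposal is correct and takes essentially the same route as the paper: smoothness at rank-$2$ points is proved by showing that the conormal space of the generic determinantal variety at $M(\ell)$ meets $\big\{\sum_i \lambda_i c_i e_i^T\big\}$ trivially --- which is exactly the paper's Lemma \ref{le: no4sing}, where the column spans of $J_1$ and $J_2$ are shown to intersect trivially, your transversality of $\Lambda$ to $\mathcal D$ in different clothing --- and singularity at the rank-$1$ point is obtained from van der Waerden's purity theorem (Theorem \ref{purity_thm}) applied to the graph closure with its $2$-dimensional fiber, which is the paper's blow-up of Lemma \ref{le: blowup} transported through the isomorphism $\mathcal V_{\mathcal C}\cong\mathcal U_{\mathcal C}$ of Lemma \ref{isomorphism_LMV_and_BPP}. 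The only differences are presentational: you quote the standard conormal description $(\operatorname{col} M^0)^\perp\otimes\ker M^0$ where the paper derives the tangent space by hand via $\dot U V^T + U\dot V^T$ and a case analysis on the $\lambda_i$, and you work in $\ell$-coordinates where the paper passes to back-projected planes.
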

Before we prove this theorem, let us state an important consequence.
\begin{corollary}\label{cor_smoothness} Let $\mathcal{C}$ be a collection of $m\ge 3$ cameras, where no {three} centers are collinear.
\begin{enumerate}
\item If $m \geq 4$ and the cameras are not coplanar, $\mathcal{L}_{\mathcal{C}}$ is smooth.
\item If the cameras are coplanar, then $\mathcal{L}_{\mathcal{C}}$ has exactly one singular point, which is the image of any line in the plane spanned by the camera centers. 
\end{enumerate}
\end{corollary}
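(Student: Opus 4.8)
The plan is to deduce the corollary directly from Theorem \ref{thm:smoothness}, which identifies the singular locus with the rank-one stratum $\{\ell \in \mathcal{L}_{\mathcal{C}} \mid \operatorname{rank} M(\ell) = 1\}$. The whole argument then reduces to counting the points of this stratum, and the key observation I would establish is that they are in bijection with the planes of $\PP^3$ that contain all the camera centers.

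First I would unpack the rank-one condition geometrically. If $\operatorname{rank} M(\ell)=1$, the columns $h_i = C_i^T\ell_i$ are all nonzero (each $C_i$ has full rank) and pairwise parallel, so the back-projected planes $H_1,\dots,H_m$ all coincide with a single plane $H$. Since $c_i^T h_i = (C_i c_i)^T \ell_i = 0$, each $H_i$ contains its camera center $c_i$; hence $H$ contains every $c_i$. Conversely, given a plane $H = \{p \mid p^T h = 0\}$ through all the centers, the condition $c_i \in H$ means $h$ lies in the row space of $C_i$, so there is a unique $\ell_i \in \PP^2$ with $C_i^T\ell_i \propto h$; this produces a rank-one point $\ell$, which moreover lies in $\mathcal{L}_{\mathcal{C}}$ because any line in $H$ avoiding the finitely many centers projects to it. This establishes the promised bijection.

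With the bijection in hand, both statements follow by determining how many planes contain all camera centers. Under the standing hypothesis that no three centers are collinear, any three centers already span a plane, so the centers span a linear subspace of dimension at least two. For part (1), if $m \geq 4$ and the cameras are not coplanar, the centers must then span all of $\PP^3$, so no plane contains them all; the rank-one stratum is empty and $\mathcal{L}_{\mathcal{C}}$ is smooth by Theorem \ref{thm:smoothness}. For part (2), if the cameras are coplanar their span is a single plane $E$, which is then the unique plane through all the centers; hence there is exactly one rank-one point, it is the unique singular point, and by the construction above it is the image of any line lying in $E$.

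I expect the only real subtlety to be the bijection step: one must check both the well-definedness of $\ell_i$ from $H$ (which rests on the identity $\operatorname{im}(C_i^T) = (\ker C_i)^\perp$, i.e.\ that $c_i \in H$ is exactly what makes $h$ lie in the row space of $C_i$) and the membership of the resulting point in $\mathcal{L}_{\mathcal{C}}$. Everything else is a clean dichotomy forced by the no-three-collinear hypothesis, which conveniently guarantees the centers span at least a plane and thereby separates the coplanar and non-coplanar cases.
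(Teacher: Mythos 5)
Your proposal is correct and follows essentially the same route as the paper: both reduce the corollary via Theorem \ref{thm:smoothness} to counting rank-one points, identify these with planes containing all camera centers (forced to coincide with the span of the centers, or nonexistent, by the no-three-collinear hypothesis), and conclude the two cases accordingly. Your explicit verification of the bijection --- that $c_i \in H$ is equivalent to $h$ lying in $\operatorname{im}(C_i^T)$, giving a unique $\ell_i$, and that the resulting point lies in $\mathcal{L}_{\mathcal{C}}$ as the image of any line in $H$ avoiding the centers --- is a careful spelling-out of steps the paper's proof treats as immediate, not a different argument.
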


\begin{proof} By Theorem \ref{thm:smoothness}, the singular locus of $\mathcal{L}_{\mathcal{C}}$ consists of points $\ell\in\mathcal{L}_{\mathcal{C}}$, where $M(\ell)$ has rank one. Recall that this matrix has rank one, if and only if the back-projected planes $H_1,\ldots, H_m$ intersect in a plane, meaning~$H_1=\cdots = H_m$.

For item~1.\ we use that $c_i\in H_i$. Since $H_1=\cdots = H_m$ would imply that the camera centers lie in a common plane, which means that the cameras are coplanar. Hence, $\mathcal{L}_{\mathcal{C}}^\mathrm{sing}=\emptyset$ and $\mathcal{L}_{\mathcal{C}}$ is smooth.
For item~2.\ the only possibility for $H:=H_1=\cdots = H_m$ is the unique plane $H$ where the centers $c_1,\ldots,c_m$ lie. It corresponds to the point $\ell=(\ell_1,\ldots,\ell_m)$, where $\ell_i$ is the image of any line in $H$ not passing through any~$c_i$. 
\end{proof}

Let us compare this result to the case of the point multivariety $\mathcal{M}_{\mathcal C}$.
By \cite[Proposition 4]{trager2015joint}, when the cameras are not collinear, $\mathcal{M}_{\mathcal C}$ is smooth.
When the camera centers $c_i$ are collinear,
then $\mathcal{M}_{\mathcal C}$ has a unique singular point given by the $n$-tuple of
\textit{epipoles} $(\rho_1,\ldots, \rho_m)$, where $\rho_i = C_ic_j$ is image of~$c_j$, $i\neq j$ (since the camera centers lie on a line, all camera centers $c_j,j\neq i,$ project to the same image). In particular, for $m= 2$ the point multiview variety is singular. By contrast, the line multiview variety $\mathcal L_{\mathcal C}$ for $m=2$ is equal to $\PP^2\times \PP^2$ and hence smooth. For $m= 3$ general cameras, $\mathcal L_{\mathcal C}$ has one singular point and for $m\ge 4$ general cameras $\mathcal L_{\mathcal C}$ is smooth by Corollary \ref{cor_smoothness}.

Recall from~(\ref{def_M_C}) the definition of $\mathcal V_{\mathcal C}$ and denote
\begin{equation}\label{def_U_C}
   \mathcal{U}_{\mathcal{C}} = \{h =(h_1,\ldots,h_m) \in (\mathbb P^3)^m\mid \mathrm{rank}\, \begin{bmatrix}h_1 & \cdots & h_m\end{bmatrix}\leq 2 \text{ and } c_i^Th_i=0 \text{ for } 1\leq i\leq m.\}
\end{equation}
So, $\mathcal{U}_{\mathcal{C}}$ is the variety of back-projected planes for points in $\mathcal V_{\mathcal C}$.

\begin{lemma}\label{isomorphism_LMV_and_BPP}
$\mathcal V_{\mathcal C}$ and $\mathcal{U}_{\mathcal C}$ are isomorphic.
\end{lemma}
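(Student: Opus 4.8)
The plan is to exhibit the isomorphism explicitly through the back-projection map and its inverse, both of which are given coordinatewise by fixed linear maps. Define $\varphi\colon \mathcal V_{\mathcal C}\to (\PP^3)^m$ by $\varphi(\ell)=(C_1^T\ell_1,\ldots,C_m^T\ell_m)$. First I would observe that each factor $\ell_i\mapsto C_i^T\ell_i$ is defined on all of $\PP^2$: since $C_i$ has full rank, $C_i^T$ is injective, so $C_i^T\ell_i\neq 0$ whenever $\ell_i\neq 0$. Next, $C_ic_i=0$ gives $c_i^T(C_i^T\ell_i)=(C_ic_i)^T\ell_i=0$, so the $i$-th coordinate of $\varphi(\ell)$ satisfies the linear constraint in the definition of $\mathcal U_{\mathcal C}$; and the rank-$\leq 2$ condition is imposed on the very same matrix $M(\ell)=[C_1^T\ell_1\ \cdots\ C_m^T\ell_m]$. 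Hence $\varphi$ maps $\mathcal V_{\mathcal C}$ into $\mathcal U_{\mathcal C}$.

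For the inverse I would use the pseudo-inverse $(C_i^T)^\dagger$ introduced earlier, which satisfies $(C_i^T)^\dagger C_i^T=\mathrm{Id}_{\mathbb C^3}$, and set $\psi(h)=((C_1^T)^\dagger h_1,\ldots,(C_m^T)^\dagger h_m)$. The key algebraic fact to record is that the image of the injective map $C_i^T\colon\mathbb C^3\to\mathbb C^4$ is exactly the hyperplane $c_i^\perp:=\{h\mid c_i^Th=0\}$: the computation $c_i^T(C_i^Tv)=0$ shows $\Ima(C_i^T)\subseteq c_i^\perp$, and both spaces are $3$-dimensional, so they coincide. Consequently, for $h\in\mathcal U_{\mathcal C}$ the constraint $c_i^Th_i=0$ means $h_i\in\Ima(C_i^T)$, so $h_i=C_i^Tv$ for some $v$, whence $(C_i^T)^\dagger h_i=(C_i^T)^\dagger C_i^Tv=v\neq 0$; this shows $\psi$ is well-defined on $\mathcal U_{\mathcal C}$. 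Moreover $C_i^T(C_i^T)^\dagger h_i=C_i^Tv=h_i$, so $\varphi(\psi(h))=h$, and since the two matrices agree, $\psi(h)$ lies in $\mathcal V_{\mathcal C}$. Conversely $\psi(\varphi(\ell))=\ell$ directly from $(C_i^T)^\dagger C_i^T=\mathrm{Id}$.

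Both $\varphi$ and $\psi$ are given in each coordinate by a fixed matrix acting on homogeneous coordinates and are nowhere-vanishing on their domains as checked above, hence are regular maps; we have just verified they are mutually inverse, so by the definition of isomorphism in Section~1 this proves $\mathcal V_{\mathcal C}\cong\mathcal U_{\mathcal C}$. The only point requiring care -- the main obstacle -- is the identification $\Ima(C_i^T)=c_i^\perp$ together with the fact that $C_i^T(C_i^T)^\dagger$, although merely the orthogonal projection onto $\Ima(C_i^T)$ on all of $\mathbb C^4$, restricts to the identity precisely on this hyperplane. It is exactly the constraint $c_i^Th_i=0$ built into $\mathcal U_{\mathcal C}$ that forces each $h_i$ into $\Ima(C_i^T)$ and thereby makes $\psi$ land back where $\varphi$ started.
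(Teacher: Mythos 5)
Your proof is correct and takes essentially the same approach as the paper's: the back-projection map $\ell_i\mapsto C_i^T\ell_i$ and the pseudo-inverse map $h_i\mapsto (C_i^T)^\dagger h_i$ are exhibited as mutually inverse regular maps between $\mathcal V_{\mathcal C}$ and $\mathcal U_{\mathcal C}$. If anything, you make explicit a step the paper leaves implicit, namely that the constraint $c_i^Th_i=0$ forces $h_i\in\Ima(C_i^T)=c_i^\perp$ by a dimension count, which is precisely why the projection $C_i^T(C_i^T)^\dagger$ acts as the identity on the coordinates of points of $\mathcal U_{\mathcal C}$.
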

\begin{proof}
We have a regular map
$$\phi: \mathcal{V}_{\mathcal{C}}\rightarrow\mathcal{U}_{\mathcal{C}}, \; \ell_i\mapsto h_i = C_i^T\ell_i, $$
that is well-defined since any $c_i^Th_i=c_i^TC_i^T\ell_i=(C_ic_i)^T\ell_i=0$. We have a second regular map
$$\psi:\mathcal{U}_{\mathcal{C}}\rightarrow \mathcal{V}_{\mathcal{C}}, \; h_i\mapsto \ell_i = (C_{i}^{T})^{\dagger}h_i,$$
where $(C_{i}^{T})^{\dagger}=(C_iC_i^T)^{-1}C_i$ is the pseudo-inverse of the full rank matrix $C_i^T$. It has the property that $(C_{i}^{T})^{\dagger}C_{i}^{T}=\mathrm{Id}_{\mathbb C^3}$, which shows that $\psi\circ \phi = \mathrm{Id}_{\mathcal V_{\mathcal C}}$. Furthermore, $C_i^T(C_i^T)^\dagger = C_i^T(C_iC_i^T)^{-1}C_i$ is the matrix representation of the projection from $\PP^3$ onto the column span of $C_i^T$, which implies that $\phi\circ \psi = \mathrm{Id}_{\mathcal{U}_{\mathcal C}}$. Hence, $\phi$ and $\psi$ are inverses of each other.
\end{proof}
\begin{remark}
The recent result \cite[Lemma 6.3]{Fulvio21Chiara} by Gesmundo and Meroni implies that for generic $c_i$ the variety $\mathcal{U}_{\mathcal{C}}$ is irreducible and of dimension 4. Theorem \ref{thm: main1} together with Lemma \ref{isomorphism_LMV_and_BPP} reveal what generic means in this case. Namely, that no four $c_i$ are collinear.
\end{remark}

In the following, denote by $A_i\subseteq \mathbb G$ the set of lines through the camera center $c_i$.

\begin{lemma}\label{le: no4sing} When no {three} centers are collinear, 
$$\mathcal{L}_{\mathcal{C}}^\mathrm{sing}\subseteq \{\ell \in \mathcal{L}_{\mathcal{C}} \mid \mathrm{rank}\ M(\ell) = 1\}.$$
\end{lemma}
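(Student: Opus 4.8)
The plan is to reduce the statement to a local computation on the isomorphic model $\mathcal U_{\mathcal C}$ and then exhibit, around every point of rank $2$, an explicit smooth chart. Since no three centers are collinear, in particular no four are, so Theorem \ref{thm: main1} gives $\mathcal L_{\mathcal C}=\mathcal V_{\mathcal C}=\{\ell : \operatorname{rank} M(\ell)\le 2\}$, and Lemma \ref{isomorphism_LMV_and_BPP} identifies $\mathcal V_{\mathcal C}$ with $\mathcal U_{\mathcal C}$ through the isomorphism $\ell_i\mapsto h_i=C_i^T\ell_i$. This isomorphism carries $M(\ell)$ to the matrix $[h_1\ \cdots\ h_m]$, so it preserves the rank stratification and, being an isomorphism, the smooth locus. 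Hence it suffices to show that every $h^0=(h_1^0,\dots,h_m^0)\in\mathcal U_{\mathcal C}$ with $\operatorname{rank}[h_1^0\ \cdots\ h_m^0]=2$ is a smooth point of $\mathcal U_{\mathcal C}$, where $\mathcal U_{\mathcal C}$ sits inside $\prod_{i}\{h_i\in\PP^3 : c_i^Th_i=0\}\cong(\PP^2)^m$ and is cut out there by the condition $\operatorname{rank}[h_1\ \cdots\ h_m]\le 2$.

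Fix such an $h^0$, let $V^0=\operatorname{span}(h_1^0,\dots,h_m^0)\subset\CC^4$ be the $2$-dimensional column space, and let $L^0\in\mathbb G$ be the common line of the back-projected planes, so that $c_i\in L^0$ is equivalent to $c_i\perp V^0$. Put $I_0=\{i : c_i\in L^0\}$; since $L^0$ is a line and no three centers are collinear, $|I_0|\le 2$. I would pick a pair of indices $\{j,k\}$ with $h_j^0,h_k^0$ a basis of $V^0$, chosen so as to contain as many elements of $I_0$ as possible (this is always achievable since $h^0$ has rank exactly $2$, so not all columns are proportional), and then parametrize $\mathcal U_{\mathcal C}$ near $h^0$ by letting $h_j,h_k$ vary freely in their two $\PP^2$ factors and writing $h_i=a_ih_j+b_ih_k$ for $i\neq j,k$. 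The constraint $c_i^Th_i=0$ then reads $a_i(c_i^Th_j)+b_i(c_i^Th_k)=0$. For $i\notin I_0$ the pair $(c_i^Th_j^0,c_i^Th_k^0)$ is nonzero, so $[a_i:b_i]$ is uniquely determined and depends regularly on $(h_j,h_k)$; for the remaining index $i\in I_0\setminus\{j,k\}$ the coefficients vanish at $h^0$ and $[a_i:b_i]$ becomes a free chart coordinate. A short count shows at most one such degenerate index survives: if $|I_0|\le 1$, or if $|I_0|=2$ with the two columns $h_i^0$, $i\in I_0$, independent, one can take $\{j,k\}\supseteq I_0$ and no degenerate index remains. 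In those cases $\mathcal U_{\mathcal C}$ is near $h^0$ the graph of a regular map over an open subset of $\PP^2\times\PP^2$, hence smooth of dimension $4$. The only genuinely nontrivial case is $|I_0|=2$ with the two columns proportional, where near $h^0$ the variety is the hypersurface $g=0$ with $g=a_i(c_i^Th_j)+b_i(c_i^Th_k)$ inside $\PP^2\times\PP^2\times\PP^1$; there the differential at $h^0$ is $dg=a_i^0\,c_i^T\,dh_j+b_i^0\,c_i^T\,dh_k$, and because $c_i$ is not proportional to $c_j$ or to $c_k$ the functional $c_i^T$ is nonzero on the tangent space of $\{h_j : c_j^Th_j=0\}$ (respectively of $\{h_k : c_k^Th_k=0\}$), so $dg\neq 0$ and the hypersurface is smooth of dimension $4$ at $h^0$. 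Either way $h^0$ is smooth.

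The main obstacle is organizational rather than conceptual: one must verify carefully that the proposed chart is a local isomorphism onto a neighbourhood of $h^0$ in $\mathcal U_{\mathcal C}$, i.e.\ that a spanning pair $\{j,k\}$ with the right relation to $I_0$ always exists and that the determined coordinates $[a_i:b_i]$ extend regularly across $h^0$. The hypotheses enter at exactly two points: \emph{no three collinear centers} is used only to force $|I_0|\le 2$, which is what keeps the number of degenerate equations to at most one (three collinear centers on $L^0$ would make the relevant $c_i$ linearly dependent and destroy the transversality, consistent with the extra components that appear in Theorem \ref{thm: main1}), while the distinctness of the centers is what guarantees the non-vanishing $dg\neq0$. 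I would therefore structure the write-up around the three cases $|I_0|=0,1,2$, dispatching the first two by the graph description and isolating the single transversality computation in the last case.
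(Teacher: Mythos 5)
Your proposal is correct, and it reaches the conclusion by a genuinely different route than the paper. Both arguments share the same reduction: since no three centers are collinear, Theorem \ref{thm: main1} gives $\mathcal L_{\mathcal C}=\{\ell : \operatorname{rank} M(\ell)\le 2\}$, and Lemma \ref{isomorphism_LMV_and_BPP} transports the problem to $\mathcal U_{\mathcal C}$, so that it suffices to show every rank-$2$ point $h^0\in\mathcal U_{\mathcal C}$ is smooth. From there the paper computes: it takes as equations the $3\times 3$ minors of $[h_1\ \cdots\ h_m]$ together with the incidences $c_i^Th_i$, and shows the resulting Jacobian has rank $3m-4=\operatorname{codim}\mathcal U_{\mathcal C}$ at $h^0$ by proving that the span of the minor gradients (the conormal space of the determinantal variety, accessed via the factorization $A=UV^T$ and tangent vectors $\dot UV^T+U\dot V^T$) meets the span of the vectors $\mathrm{vec}(c_ie_i^T)$ trivially; its case split is on how many coefficients $\lambda_i$ of a putative common element $B=\sum\lambda_ic_ie_i^T$ are nonzero, and the no-three-collinear hypothesis enters to produce $c_j\notin L$ with $\lambda_j\neq 0$. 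You instead build explicit local charts: anchoring two independent columns $h_j,h_k$ trivializes the rank-$\le 2$ locus near a rank-$2$ point, and the incidences either determine the fibre coordinates $[a_i:b_i]$ regularly (graph over an open set of $\PP^2\times\PP^2$) or leave exactly one hypersurface equation $g=a_i(c_i^Th_j)+b_i(c_i^Th_k)$ with nonvanishing differential. Your use of the hypothesis --- forcing $|I_0|\le 2$ so that at most one degenerate equation survives --- is the exact analogue of the paper's, and your hard case ($|I_0|=2$ with proportional columns, i.e.\ two centers on $L^0$ sharing a back-projected plane) corresponds precisely to the paper's remaining case $\lambda_i=0$ for $i\neq 1,2$ with $c_1,c_2\in L$, where the paper concludes via $\langle B,X\rangle=\langle BV,BV\rangle\neq 0$. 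What each buys: the paper's computation leans on standard facts about the generic determinantal variety (that the $3\times 3$ minors generate its ideal and its codimension is $2m-4$) but avoids chart bookkeeping; yours is more elementary, and gives the local structure of $\mathcal U_{\mathcal C}$ for free (graph, or smooth hypersurface in $\PP^2\times\PP^2\times\PP^1$), in particular recovering the local dimension $4$ without citing Theorem \ref{thm_dimension}. Both correctly sidestep the fact that the defining ideal of $\mathcal U_{\mathcal C}$ is unknown --- the paper by noting that a full-rank Jacobian of \emph{any} vanishing polynomials suffices, you by the equivalent fact that a set which is locally the zero locus of equations with independent differentials is locally a smooth variety (smooth implies reduced). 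Two small polish points for your write-up: since $[a_i^0:b_i^0]=[1:0]$ in the degenerate case, only the term $a_i^0\,c_i^T\,dh_j$ of $dg$ survives, so you need only $c_i\not\sim c_j$ (the condition on $c_k$ is unnecessary); and you should record that $c_i^T$ descends to the tangent space $\{v : c_j^Tv=0\}/\CC h_j^0$ exactly because $c_i^Th_j^0=0$, i.e.\ because $i\in I_0$.
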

\begin{proof}
{We denote $A(h):=\begin{bmatrix}h_1 & \cdots & h_m\end{bmatrix}$.

By Theorem \ref{thm: main1}, we have $\mathcal L_{\mathcal C} = \mathcal V_{\mathcal C}$, and by Lemma \ref{isomorphism_LMV_and_BPP} the varieties $\mathcal V_C$ and $\mathcal U_{\mathcal C}$ are isomorphic. Therefore, it suffices to show that points $h\in \mathcal U_{\mathcal C}$, where $\mathrm{rank}\, A(h)=2$, are smooth points of $\mathcal U_{\mathcal C}$. In the following, we fix such a point $h=(h_1,\ldots,h_m)$. 

We introduce the nondegenerate bilinear form on matrices
$$
\langle B_1,B_2\rangle = \mathrm{Trace}(B_1^TB_2).$$%,\quad B_1,B_2\in \mathbb C^{4\times m}.$$
Let $m_1,\ldots,m_N$, $N=4\tbinom{m}{3}$, be the minors of size 3 of the $4\times m$ matrix $A(h)$,  and observe that $c_i^Th_i=\langle A(h), c_ie_i^T\rangle$, where $e_i$ is the $i$th standard basis vector of $\CC^m$. Let 
$$J =\begin{bmatrix} J_1 &  J_2\end{bmatrix}^T, \quad\text{where } J_1 = \begin{bmatrix}
    \frac{\partial m_1}{\partial h} & \ldots &\frac{\partial m_N}{\partial h}
    \end{bmatrix} \text{ and }
    J_2 = \begin{bmatrix}
    \mathrm{vec}(c_1e_1^T) & \ldots &\mathrm{vec}(c_me_m^T)
    \end{bmatrix}
$$
(here, $\mathrm{vec}(\ \cdot\ )$ denotes the vectorization of a matrix).
We show that $h$ is a smooth point by proving that the Jacobian matrix $J\in \mathbb C^{(N+m)\times (4m)}$ at $h$ has rank equal to $\mathrm{codim} (\mathcal{U}_{\mathcal C}) = 3m-4$ (the dimension of $\mathcal{U}_{\mathcal C}$ is 4 by Lemma \ref{isomorphism_LMV_and_BPP}).
This is enough, even if we don't know whether or not the polynomials above generate the ideal of $\mathcal U_{\mathcal C}$. 

We denote the algebraic variety $\mathcal R:= \{(h_1,\ldots,h_m) \in (\mathbb P^3)^m\mid \mathrm{rank}\, A(h)\leq 2\}.
$
We also denote its cone by~$\hat{\mathcal R}:= \{A \in \mathbb C^{4\times m}\mid \mathrm{rank}\, A\leq 2\}$, which is the variety of rank (at most) 2 matrices in $\mathbb C^{4\times m}$. The dimension of the variety $\hat{\mathcal R}$ is $2(4+m-2)=2m+4$, so that $\operatorname{codim} \mathcal R = \operatorname{codim} \hat{\mathcal R} = 2m-4$. The smooth locus of $\mathcal R$ are the matrices of rank exactly two. We have that $I(\mathcal R) = \langle m_1,\ldots,m_N\rangle$, so if $A(h)$ has rank two, $h$ is a smooth point on $\mathcal R$, which implies that 
$\mathrm{rank}\, J_1 = \operatorname{codim} \mathcal R =  2m-4.$
We also have
$\mathrm{rank}\, J_2 = m.$
To show that $J$ has rank $3m-4$, we have to show that the column spans of $J_1$ and $J_2$ intersect trivially.

In the following, we write
$$A:=A(h)\in\mathbb C^{4\times m}$$
Since the rank of $A$ is 2,  we can find rank-$2$ matrices $U\in\mathbb C^{4\times 2}, V\in\mathbb C^{m\times 2}$ such that we have $$A=UV^T.$$
Because $h$ is a smooth point on $\hat{\mathcal R}$, the tangent space of $T_A\hat{\mathcal R}$ consists of derivatives of smooth curves in $\hat{\mathcal R}$ through $A$.
For every $\dot U\in \mathbb C^{4\times 2}$ and $\dot V\in \mathbb C^{m\times 2}$ we have a smooth curve $\gamma(t):=(U+t\dot U)(V+t\dot V)^T\in \mathcal R$. By linearity, we have $\tfrac{\mathrm d }{\mathrm d t}\gamma(t)|_{t=0} = \dot UV^T + U\dot V^T$. This shows
\begin{equation}\label{element_in_TS}
\text{for every } \dot U\in \mathbb C^{4\times 2}, \dot V\in \mathbb C^{m\times 2}: \quad  \dot UV^T + U\dot V^T \in T_A\hat{\mathcal R}.
\end{equation}
Then, the column span of $J_1$ is given by $\{\mathrm{vec}(B) \mid \langle B,X\rangle = 0 \text{ for all } X \in T_A\hat{\mathcal R}\}$.

Take now $B:=\sum_{i=1}^m
\lambda_i\ c_ie_i^T$ and suppose that $B\neq 0$; i.e., $\mathrm{vec}(B)$ is in the column span of $J_2$.
If $\mathrm{vec}(B)$ is also in the column span of $J_1$, then we would have $\langle B, X\rangle = 0$ for every $X\in T_{A}\hat{\mathcal R}$. We find an element in~$T_{A}\hat{\mathcal R}$, where this is not so. By (\ref{element_in_TS}), we can choose $X=\dot UV^T + U\dot V^T$ with $ \dot U\in \mathbb C^{4\times 2}, \dot V\in \mathbb C^{m\times 2}$. Then,
$$\langle B, X\rangle = \langle B, \dot UV^T + U\dot V^T\rangle = \langle B, \dot UV^T\rangle + \langle B, U\dot V^T\rangle = \langle BV, \dot U\rangle + \langle U^TB, \dot V^T\rangle.$$
Without restriction, we can assume that $\lambda_1\neq 0$. Let $L$ be the unique line in the intersection of the back-projected planes defined by $h=(h_1,\ldots,h_m)$; i.e., $L=\{p\in\mathbb P^3\mid h_1^Tp=\cdots =h_m^Tp=0\}$ spans the left kernel of $A$. 

If at least three of the $\lambda_i$ are non-zero, then there exists $\lambda_j\neq 0$, such that $c_j\not\in L$, because at most two camera centers are collinear. In this case, we choose $X\in T_{A}\hat{\mathcal R}$ by taking $\dot U=0$ and $\dot V = e_jx^T$ with $x=V^TA^Tc_j\in\mathbb R^2$. Then,
$$\langle B, X\rangle = \langle U^TB, \dot V^T\rangle=x^TU^TBe_j = 
\lambda_j\ (x^TU^Tc_j) = \lambda_1 (A^Tc_j)^T(A^Tc_j).$$
Recall that $L$ spans the left kernel of $A$. Since $c_j\not\in L$, we have $A^Tc_j\neq 0$, so $\lambda_1 (A^Tc_j)^T(A^Tc_j)\neq 0$. 

The only case that remains is when $\lambda_i=0$ for $i\neq 1,2$ and $c_1,c_2\in L$ (after relabeling). Then, 
$$BV = \lambda_1 c_1 v_1^T + \lambda_2 c_2 v_2^T,\quad \text{ where } v_i:=V^Te_i.$$
We show that $BV\neq 0$.
We have $v_1\neq 0$, because otherwise $h_1=Ae_1 = Uv_1=0$. Similarly, $v_2\neq 0$. So, there exists $w\in \mathbb C^2$ with $v_1^Tw, v_2^Tw\neq 0$. Since $c_1$ and $c_2$ are distinct (and hence linearly independent) and $\lambda_1,\lambda_2$ are not both zero, this gives $BVw = \lambda_1 (v_1^Tw)\ c_1  + \lambda_2 (v_2^Tw)\ c_2\neq 0$. So, $BV\neq 0$. We choose $X\in T_{A}\hat{\mathcal R}$ by setting $\dot U=BV$ and $\dot V=0$. Then, 
$$\langle B, X\rangle = \langle BV, \dot U\rangle = \langle BV, BV\rangle \neq 0.$$

In both cases, there exists $X\in T_{A}\hat{\mathcal R}$ with $\langle B,X\rangle \neq 0$. We have shown that the column spans of $J_1$ and $J_2$ intersect trivially.}
\end{proof}

{In our application of van der Waerden's theorem \ref{purity_thm}, we will develop a birational map $\varphi: X\to \mathcal U_{\mathcal C}$, where $X$ is the \emph{blow-up} of $\mathbb G$ as constructed in Lemma \ref{le: blowup} below. Denote by $A_i\subseteq \mathbb G$ the set of lines through the camera center $c_i$. Every $A_i$ is isomorphic to $\mathbb P^2$, hence smooth. 
\begin{lemma}\label{le: blowup} Consider the blow-up
$$X=\overline{\{(L,\ \mathrm{span}(\{c_1\}\cup L),\ \ldots,\ \mathrm{span}(\{c_m\}\cup L)) \mid L\not\in A_1\cup\cdots\cup A_k\}}\subseteq \mathbb G\times (\mathbb P^3)^{m},$$
where, as before, we identify a plane in $\mathbb P^3$ by its linear equation (a point in $\mathbb P ^3$). Then, the fibers of the projection $\pi$ from $X\subseteq \mathbb G\times \mathcal{U}_{\mathcal{C}}$ to $\mathcal{U}_{\mathcal{C}}$ are singeltons if the planes identified with $(h_1,\ldots,h_m)\in \mathcal{U}_{\mathcal{C}}$ meet in exactly a line and $2$-dimensional otherwise.
\end{lemma}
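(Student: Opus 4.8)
The plan is to fix an arbitrary point $h=(h_1,\ldots,h_m)\in\mathcal U_{\mathcal C}$, write $H_i$ for the plane cut out by $h_i$, and determine the fiber $\pi^{-1}(h)=\{L\in\mathbb G\mid(L,h)\in X\}$ by squeezing it between a closed upper bound coming from the equations that necessarily hold on $X$ and a lower bound coming from explicit limits along the graph. Throughout I use that $X\subseteq\mathbb G\times\mathcal U_{\mathcal C}$, which holds because the closed conditions $c_i^Th_i=0$ and $\mathrm{rank}\,[h_1\cdots h_m]\le 2$ are satisfied on the graph and hence on its closure.

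First I would establish the containment $\pi^{-1}(h)\subseteq\{L\in\mathbb G\mid L\subseteq\bigcap_i H_i\}$. The crucial point is that for a line $L$ with Plücker matrix $\rho(L)=xy^T-yx^T$ one has $L\subseteq H_i$ if and only if $\rho(L)h_i=0$; indeed $\rho(L)h_i=x(y^Th_i)-y(x^Th_i)$ vanishes precisely when $x,y\in H_i$. Since each equation $\rho(L)h_i=0$ is bilinear, hence Zariski-closed on $\mathbb G\times(\mathbb P^3)^m$, and holds on the graph (where $H_i=\mathrm{span}(\{c_i\}\cup L)\supseteq L$), it holds on the closure $X$. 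Thus every line in the fiber is contained in all the $H_i$, that is, in their intersection.

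Next I would prove that $\pi$ is surjective, so that no fiber is empty. As $X$ is closed in a product of projective spaces, $\pi$ is proper, hence a closed map, so $\pi(X)$ is closed; on the other hand $\pi$ restricted to the graph has image $\phi(\Upsilon_{\mathcal C}(\mathbb G))$, where $\phi$ is the isomorphism of Lemma \ref{isomorphism_LMV_and_BPP}, and this image is dense in $\mathcal U_{\mathcal C}$ because $\overline{\Upsilon_{\mathcal C}(\mathbb G)}=\mathcal L_{\mathcal C}=\mathcal V_{\mathcal C}$ by Theorem \ref{thm: main1} (applicable since no three centers are collinear). Hence $\pi(X)=\mathcal U_{\mathcal C}$ and every fiber is nonempty. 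I expect the genuine obstacle to be exactly this nonemptiness in the rank-$2$ case when the common line passes through several camera centers: constructing an explicit approximating sequence there would recapitulate the delicate limit analysis of Theorem \ref{thm: main2}. Routing nonemptiness instead through properness of $\pi$ and density of the graph's image sidesteps that entirely, which is why I would organize the argument around the closed-map observation rather than around direct limit constructions.

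With both bounds in hand the two cases are immediate. If $\mathrm{rank}\,[h_1\cdots h_m]=2$, then $\bigcap_iH_i$ is a single line $L_0$, the only line it contains is $L_0$ itself, and nonemptiness forces $\pi^{-1}(h)=\{L_0\}$, a singleton. If the rank is $1$, all $h_i$ are proportional and $\bigcap_iH_i$ is one plane $H$, which contains every camera center since $c_i^Th_i=0$; the lines in $H$ form a $2$-dimensional family, giving $\dim\pi^{-1}(h)\le 2$. For the reverse inclusion I would take any line $L\subseteq H$, approximate it by lines $L^{(n)}\subseteq H$ avoiding the finitely many centers, and note that $\mathrm{span}(\{c_i\}\cup L^{(n)})$, being a plane contained in $H$, equals $H$; hence $(L^{(n)},H,\ldots,H)$ lies on the graph and converges to $(L,h)$, so $L\in\pi^{-1}(h)$. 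Thus the fiber is exactly the $2$-dimensional family of lines in $H$, completing the dichotomy.
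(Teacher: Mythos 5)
Your proof is correct, and its skeleton is the same as the paper's: an upper bound confining the fiber to the lines inside $\bigcap_i H_i$, nonemptiness to upgrade the rank-$2$ case to a singleton, and approximation by lines in $H$ avoiding the finitely many centers in the rank-$1$ case. What you do differently is supply arguments for the two steps the paper merely asserts. The paper states without justification that $(L,h)\in X$ forces $L\subseteq H_i$; your bilinear certificate $\rho(L)h_i=x(y^Th_i)-y(x^Th_i)=0$, a Zariski-closed condition on $\mathbb G\times(\PP^3)^m$ that visibly holds on the graph, is exactly the missing reason this incidence survives passage to the closure. More substantially, the paper dismisses surjectivity of $\pi$ with ``by definition,'' whereas you derive it from closedness of projections of projective varieties together with density of the graph's image: $\pi(X)$ is closed and contains $\phi(\Upsilon_{\mathcal C}(\mathbb G\setminus(A_1\cup\cdots\cup A_m)))$, whose closure is $\phi(\mathcal L_{\mathcal C})=\phi(\mathcal V_{\mathcal C})=\mathcal U_{\mathcal C}$ by Lemma \ref{le: polyzar}, Theorem \ref{thm: main1}, and Lemma \ref{isomorphism_LMV_and_BPP}. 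This buys two things. First, as you note, it makes the real content of nonemptiness visible: the rank-$2$ fibers whose common line passes through several centers are nonempty because of the limit analysis of Theorem \ref{thm: main2}, which your route reuses through Theorem \ref{thm: main1} instead of recapitulating. Second, it exposes a hypothesis the lemma's statement leaves silent: with four or more collinear centers one has $\pi(X)=\phi(\mathcal L_{\mathcal C})\subsetneq\mathcal U_{\mathcal C}$, so some rank-$2$ fibers would be empty rather than singletons; this is harmless in context, since the lemma is only applied inside Theorem \ref{thm:smoothness} under the ``no three centers collinear'' assumption that you correctly invoke, but your argument makes the dependence explicit where the paper's one-liner hides it.
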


\begin{proof} We first observe that $\pi$ is surjective by definition. Let $(h_1,\ldots,h_m)\in\mathcal U_{\mathcal C}$. If $(L,h_1,\ldots,h_m)\in X$, then $L\subseteq H_i = \{p\in\mathbb P^3 \mid h_i^Tp=0\}$. We conclude that if $H_1,\ldots,H_m$ meet in a line $L$, then the fiber is exactly the point $(L,h_1,\ldots,h_m)$. If $(H_1,\ldots,H_m)$ meet in a plane, they are all equal: $H:=H_1=\cdots=H_m$. Then, an open dense subset of lines in $H$ meets no centers, and therefore the fiber is the set of points $(L,h_1,\ldots,h_m)$ for any $L\subseteq H$. The variety of lines in $H$ has dimension 2.
\end{proof}

We can now prove Theorem \ref{thm:smoothness}.
\begin{proof}[Proof of Theorem \ref{thm:smoothness}] 
It follows from Lemma \ref{le: no4sing} that $\mathcal L_{\mathcal{C}}^\mathrm{sing}\subseteq \{\ell \in \mathcal{L}_{\mathcal{C}} \mid \mathrm{rank}\ M(\ell) = 1\}$.
If the camera centers are not coplanar, then the right hand side of this is empty, so they are equal.

To complete the proof we now suppose that the camera centers are coplanar. Recall from (\ref{def_U_C}) the definition of the variety $\mathcal U_{\mathcal C}$ of back-projected planes.
 Since by assumption no four camera centers are coplanar, $\mathcal L_{\mathcal{C}}$ is isomorphic to $\mathcal U_{\mathcal C}$ by Theorem \ref{thm: main1} and Lemma \ref{isomorphism_LMV_and_BPP}. We show the equivalent statement that $\mathcal U_{\mathcal{C}}^\mathrm{sing}$ consists of those points $h\in \mathcal U_{\mathcal{C}}$, where $\mathrm{rank}\, \begin{bmatrix}h_1 & \cdots & h_m\end{bmatrix}=1$.

For this, let $X\subseteq \mathbb G\times (\mathbb P^3)^{k}$ be the blow-up as defined as in Lemma \ref{le: blowup}. Consider the projection morphism $\pi:X\to \mathcal{U}_{\mathcal C}$. Via the Segre embedding, we may assume that $\pi$ is a morphism of projective complex spaces (instead of products of projective complex spaces). Let $V$ be an open set in $\mathcal{U}_{\mathcal C}$. Then $\pi^{-1}(V)\to V$ is an isomorphism if and only if $\pi$ is injective on $\pi^{-1}(V)$. We have shown in Lemma \ref{le: blowup} that $\pi^{-1}(h)$ is a singleton, if and only if the planes $H_i=\{h_i=0\}$ intersect in exactly a line. Let
\begin{equation}\label{def_W}
W:=\{h=(h_1,\ldots,h_m)\in \mathcal U_{\mathcal C}\mid H_1,\ldots,H_m \text{ meet exactly in a line}\}.
\end{equation}
We apply van der Waerden's purity theorem (Theorem \ref{purity_thm}) to the birational map $\pi:X\to \mathcal U_{\mathcal C}$. The open set~$W$ in (\ref{def_W}) satisfies the assumptions of Theorem \ref{purity_thm}. By Theorem \ref{thm_dimension}, $\dim \mathcal U_{\mathcal C} = \dim \mathcal L_{\mathcal C} = 4$. By Lemma~\ref{le: blowup}, the fibers of $h\in \mathcal U_{\mathcal C}\setminus W$ are $2$-dimensional. Hence, $X\setminus \pi^{-1}(W)$ has codimension $4-2=2>1$, and so Theorem~\ref{purity_thm} implies that $\mathcal U_{\mathcal C}$ is not smooth; i.e., $\mathcal U_{\mathcal{C}}^\mathrm{sing}\neq \emptyset$.

We have shown that there exists a singular point $h\in \mathcal{U}_{\mathcal{C}}^\mathrm{sing}$ with $\mathrm{rank}\, \begin{bmatrix}h_1 & \cdots & h_m\end{bmatrix}=1$. By assumption that $m\ge 3$ and no three center are collinear, there is a unique such point $h=(h_1,\ldots,h_m)$ corresponding to $H_1=\cdots=H_m$. In other words $W$ is smooth and $\mathcal U_{\mathcal C}\setminus W$ consists of one point. We conclude that this point is the only singular point of $\mathcal{L}_{\mathcal{C}}$.
%Since $W$ is smooth, but $\mathcal{U}$ Let $h'\in \mathcal{U}_{\mathcal{C}}$ be another point with $\mathrm{rank}\, \begin{bmatrix}h_1' & \cdots & h_m'\end{bmatrix}=1$.
%The variety of rank-one matrices is a homogeneous space, which implies that there exists an invertible projective linear transformation $A\in \mathrm{PGL}(4)$ with $Ah_i = h_i'\in\mathbb P^3$. This means that, if the ideal of $\mathcal U_{\mathcal C}$ is generated by polynomials $f_1,\ldots,f_k$ (without knowing the $f_i$ exactly), then the Jacobian matrix $J=[\tfrac{\partial f_1}{\partial h}, \ldots, \tfrac{\partial f_k}{\partial h}]\in\mathbb C^{4\times k}$ degenerate at $h$, if and only if it is degenerate at $h'$. This means that all points $h\in \mathcal{U}_{\mathcal{C}}^\mathrm{sing}$ with $\mathrm{rank}\, \begin{bmatrix}h_1 & \cdots & h_m\end{bmatrix}=1$ are singular. Consequently, we have also for coplanar cameras that
%$\mathcal{L}_{\mathcal{C}}^\mathrm{sing}=\{\ell \in \mathcal{L}_{\mathcal{C}} \mid \mathrm{rank}\ M(\ell) = 1\}$.
 \end{proof}}

%%%%%%%%%%%%%%%%%%%%%%%%%%%%%%%%%%%%%%%%%%%%%%%%%%%%%%%%%%%%%%%%%%%%%%%%%%%%%%%%%%%%%%%%%%%%%%%%%%%%%%%%%%%%%%%%%%%%%%%%%%%%%%%%%%%%%%%%%%%%%%%%%%%%%%%%%%%%%%%%%%%%%%%%%%%%%%%%%%%%%

\section{Multidegrees} The multidegree of the line multiview variety $\mathcal{L}_{\mathcal{C}}$ is defined as the function
$$D(d_{1},\dots,d_{m}):= \#( \mathcal{L}_{\mathcal{C}}\cap (L_{d_{1}}^{(1)}\times \cdots\times L_{d_{m}}^{(m)})), $$
for $(d_{1},\dots,d_{m})\in \mathbb N^{n}$ such that $d_{1}+\cdots +d_{m} = \operatorname{dim} \mathcal{L}_{\mathcal{C}}=4$,  where for each $1\leq i\leq m$ we denote by~$L_{d}^{(i)}\subset \mathbb P^{2}$  a general linear subspace of codimension $d$. The multidegree of a variety in $(\PP^2)^m$
gives its class in the \emph{Chow ring} of $(\PP^2)^m$; see \cite[Chapter~1]{eisenbud-harris:16}. While this is the algebraic interpretation, below we will interpret the multidegree of the line multiview variety from the point of view of computer vision.

We consider a collection $\mathcal C=(C_1,\ldots,C_m)$ of $m$ \emph{general} cameras. This means we take $\mathcal C$ from a Zariski dense subset of all camera tuples, where in particular no four cameras are collinear. Theorem~\ref{thm: main1} implies that for general cameras $\mathcal{L}_{\mathcal{C}}= \{(\ell_{1},\dots,\ell_{m})\in (\mathbb P^2)^{m} \mid \operatorname{rank} M(\ell)\leq 2\}. $ Other than in Theorem~\ref{thm: main1}, here we do not specify the notion of being general in detail.
When the $C_i$ are general, the function $D$ is symmetric meaning that $D(d_{1},\dots,d_{m})=D(d_{\sigma{(1)}},\dots,d_{\sigma{(m)}})$  for any permutation $\sigma$ on $m$ elements. This implies that the multidegree~$D$ is completely determined by the three values $D(2,2,0,\dots,0)$ and $D(2,1,1,0,\dots,0)$ and~$D(1,1,1,1,0,\dots,0)$. We compute them next.

\begin{theorem}\label{thm_mdeg}
For general cameras the multidegree of the line multiview variety $\mathcal{L}_{\mathcal{C}}$
is given by the values $D(2,2,0,\dots,0)=1$ and $D(2,1,1,0,\dots,0)=1$ and $D(1,1,1,1,0,\dots,0)=2$ up to permutation.
\end{theorem}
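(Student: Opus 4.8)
The plan is to transport the whole computation from $(\PP^2)^m$ to the Grassmannian $\mathbb G$, exploiting that $\Upsilon_{\mathcal C}\colon \mathbb G\dashrightarrow \mathcal L_{\mathcal C}$ is birational. Indeed, $\Upsilon_{\mathcal C}$ is dominant onto the $4$-dimensional variety $\mathcal L_{\mathcal C}$ and generically injective by Proposition \ref{properties_of_LMV}(1), so it restricts to an isomorphism between dense open subsets $U\subseteq\mathbb G$ and $V\subseteq\mathcal L_{\mathcal C}$. For general linear subspaces $L^{(i)}_{d_i}$, the finite set $\mathcal L_{\mathcal C}\cap(L^{(1)}_{d_1}\times\cdots\times L^{(m)}_{d_m})$ avoids the proper closed complement $\mathcal L_{\mathcal C}\setminus V$ by general position, so each intersection point corresponds, with the same multiplicity, to a unique line $L\in U$ satisfying the pulled-back incidence conditions. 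Thus every multidegree value becomes a count of lines $L\in\mathbb G$ meeting a prescribed family of Schubert conditions, reducing the problem to Schubert calculus on $\mathbb G\cong G(2,4)$.

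Next I would set up the dictionary translating the two kinds of linear conditions into Schubert conditions on $L$. A codimension-$2$ condition on the $i$-th factor fixes $\ell_i$ to be a general line of $\PP^2$; since $C_i\cdot L=\ell_i$ holds exactly when $L$ lies in the back-projected plane $H_i$, this pulls back to the Schubert variety $\{L\in\mathbb G\mid L\subseteq H_i\}$ of lines contained in a fixed plane, a codimension-$2$ cycle with class $\sigma_{1,1}$. A codimension-$1$ condition on the $i$-th factor requires the image line $C_i\cdot L$ to pass through a general point $q_i\in\PP^2$; as $q_i\in C_i\cdot L$ is equivalent to $L$ meeting the back-projected viewing ray $R_i$ through $c_i$, this pulls back to the transversal Schubert variety $\Omega(R_i)$, a codimension-$1$ cycle with class $\sigma_1$. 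Factors with $d_i=0$ impose no condition, their coordinate being determined by $\ell_i=C_i\cdot L$.

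With this dictionary the three values follow directly. For $D(2,2,0,\dots,0)$ the conditions force $L\subseteq H_1$ and $L\subseteq H_2$; for general $\ell_1,\ell_2$ the planes meet in a single line $L=H_1\cap H_2$, giving $1$. For $D(2,1,1,0,\dots,0)$ the conditions force $L\subseteq H_1$ and $L$ to meet the two general lines $R_2,R_3$; since $R_j$ meets $H_1$ in one point $p_j$ and a line $L\subseteq H_1$ meets $R_j$ iff it passes through $p_j$, the unique solution is the line of $H_1$ through $p_2$ and $p_3$, giving $1$. For $D(1,1,1,1,0,\dots,0)$ the conditions force $L$ to meet the four general lines $R_1,R_2,R_3,R_4$; by \eqref{intersection_Schubert} there are exactly $2$ such transversals, giving $2$. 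Equivalently, these are the Pieri products $\sigma_{1,1}^2=\sigma_{2,2}$, $\sigma_{1,1}\sigma_1^2=\sigma_{1,1}(\sigma_2+\sigma_{1,1})=\sigma_{2,2}$, and $\sigma_1^4=2\,\sigma_{2,2}$ in $A^*(\mathbb G)$, of degrees $1,1,2$.

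The main obstacle is establishing the general-position facts that legitimize passing to $\mathbb G$ and reading off the classical Schubert numbers. Concretely one must verify, for general cameras $\mathcal C$ and general linear spaces $L^{(i)}_{d_i}$, that: (i) all solution points lie in the rank-exactly-$2$ locus $V$, where $\Upsilon_{\mathcal C}$ is a local isomorphism, so the rank-$1$ (singular) locus contributes nothing and all multiplicities are $1$; (ii) the pulled-back planes $H_i$ and viewing rays $R_i$ are themselves in general position, so the Schubert cycles meet transversally in the expected dimension; and (iii) in the last case the rays $R_1,\dots,R_4$ form four general lines in $\PP^3$ (in particular disjoint, with no common transversal through a camera center), so that \eqref{intersection_Schubert} applies and yields two distinct reduced solutions. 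Each of these is a routine but necessary dimension count, justified by Kleiman's transversality theorem applied to the factor-wise $\mathrm{PGL}_3^m$-action on $(\PP^2)^m$; once they are in place, the Schubert computations above are immediate.
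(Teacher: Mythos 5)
Your proposal is correct and takes essentially the same route as the paper: the paper's proof also replaces the count of tuples $\ell$ by a count of lines $L$ (using that $\operatorname{rank} M(\ell)\leq 2$ means the back-projected planes meet in a line that determines $\ell$ via $\Upsilon_{\mathcal C}$), and handles the three cases exactly as you do --- $L=H_1\cap H_2$ for $D(2,2,0,\dots,0)$, the unique line of $H_1$ through the two points where the back-projected rays of $x_2,x_3$ meet $H_1$ for $D(2,1,1,0,\dots,0)$, and the two transversals of four general lines via \eqref{intersection_Schubert} for $D(1,1,1,1,0,\dots,0)$. Your additional Schubert-class bookkeeping ($\sigma_{1,1}^2=\sigma_{2,2}$, $\sigma_{1,1}\sigma_1^2=\sigma_{2,2}$, $\sigma_1^4=2\sigma_{2,2}$) and the explicit appeal to Kleiman transversality for the genericity claims are refinements the paper leaves implicit, but they do not alter the argument.
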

\begin{proof} 
If the camera matrices are general, no four of the centers are collinear. So by Theorem~\ref{thm: main1} we have $\mathcal{L}_{\mathcal{C}}= \{\ell \in (\mathbb P^2)^m \mid \mathrm{rank}\ M(\ell) \leq 2\}$. As before, we denote $h_i:=C_i^T\ell_i$ and we denote the back-projected planes by~$H_i=\{p\in\PP^3 \mid h_i^Tp=0\}$.
The proof is based on the observation that $\mathrm{rank}\ M(\ell) \leq 2$, if and only if the back-projected planes $H_i$ meet in a line $L$. Such a line uniquely determines $\ell$ by $\ell=\Upsilon_{\mathcal C}(L)$. So, instead of counting $\ell$, we can count the possibilities for $L$.

For $D(2,2,0,\dots,0)$ the first two entries $\ell_1$ and $\ell_2$ are general and fixed. Hence, $H_1$ and $H_2$ are fixed and general. Then $H_1\cap H_2$ meet in exactly a line, which must be $L$. Generically, $L$ does not meet any camera center. Therefore $\Upsilon_{\mathcal{C}}(L)$ is well-defined and determines $\ell_3,\ldots,\ell_m$ uniquely, so that we have $D(2,2,0,\dots,0)=1$.

For $D(2,1,1,0,\dots,0)$, the first entry $\ell_1$ is again general and fixed, which implies that $H_1$ is general and fixed. Furthermore, for general fixed $x_2,x_3\in \PP^2$ we have $x_2^T\ell_2=0,x_3^T\ell_3=0$. Let $K_2$, respectively $K_3$ denote the back-projected line of $x_2$, respectively $x_3$. Then~$H_2$, respectively $H_3$, contains the general line $K_2$, respectively $K_3$, in $\PP^3$. Denote by $q$, respectively $q'$, the unique intersection point of $H_1\cap K_2$, respectively $H_1\cap K_3$. Let $L$ denote the line spanned by $q,q'$; it is the only line in $\PP^3$ that is projected onto $\ell_1,\ell_2$ and $\ell_3$ by the camera matrices $C_1,C_2$ and $C_3$. The line $L$ determines all other~$\ell_i$, meaning $D(2,1,1,0,\dots,0)=1$.

Finally, let us consider  $D(1,1,1,1,0,\dots,0)$. In this case, $H_i$ is constrained to contain a general line $K_i$ in~$\mathbb P^3$ for $1\leq i\leq 4$. By (\ref{intersection_Schubert}), there are two lines meeting four general lines in~$\mathbb P^3$, so that $D(1,1,1,1,0,\dots,0)=2$. The remaining back-projected planes are again uniquely determined after choosing one of the two lines.
\end{proof}

\begin{remark}
In the point multiview variety, the multidegree can be similarly calculated: $\mathcal{M}_{\mathcal{C}}\subseteq (\PP^2)^m$ from Section~\ref{s: line_mult} is of dimension $3$ and for general cameras we therefore need to determine the values of~$D(2,1,0,\ldots,0)$ and $D(1,1,1,0\ldots,0)$. We write $x=(x_1,\ldots,x_m)$ for a point $x\in \mathcal{M}_{\mathcal{C}}$.

To determine $D(2,1,0,\ldots,0)$, we fix generic $x_1$ and let $x_2$ lie in a fixed generic line $\ell$ in $\PP^2$. The back-projected line of $x_1$ and the back-projected plane of $\ell$ generically meet in just one point $X\in \PP^3$, which determines all other components $x_i$, meaning $D(2,1,0,\ldots,0)=1$.
In the case of~$D(1,1,1,0,\ldots,0)$, the three points $x_1,x_2,x_3$ lie on fixed generic lines $\ell_1,\ell_2,\ell_3$ instead. Their back-projected planes meet generically in one unique point $X\in \PP^3$, again showing $D(1,1,1,0,\ldots,0)=1$.

{In the recent work of \cite{escobar2017multidegree}, the multidegree of the \textit{concurrent lines variety}, the variety of lines in $\PP^3$ meeting in a point, was computed. The analogous problem in the line case would be to compute the multidegree of the variety of planes in $\PP^3$ meeting in a line.}
\end{remark}

{Let us discuss Theorem \ref{thm_mdeg} from the point of view of computer vision. Recall that for the line multiview variety we use dual coordinates $\ell\in\mathbb P^2$ which define lines by the equations $x^T\ell = 0$. Putting one linear equation on $\ell$ corresponds to restricting $\ell$ to go through a fixed point in $\mathbb P^2$.

The equation $D(2,2,0,\dots,0)=1$ means that for a general set of $m$ cameras it is enough to take only~2 images of a general line in $\mathbb P^3$ to completely determine the other $m-2$ images.
If the cameras are real, since complex solutions must come in pairs of complex conjugates, we must get $m$ real images.
Furthermore, $D(2,1,1,0,\dots,0)=1$ implies that it is enough to take~1 image of a general line $L$ and to take 2 images of points lying on $L$ to determine the other $m-3$ images. As before, if the cameras are real, we must have $m$ real images. Finally, $D(1,1,1,1,0,\dots,0)=2$ shows that 4 images of 4 points on a general line $L$ in $\PP^3$ determine exactly two $m$-tuples of lines in $\PP^2$. If the cameras are real, these are either both not real or both real. Proposition~\ref{prop_expected} below discusses how many 
real images we can expect when the camera matrices are random real matrices filled with i.i.d.\ standard Gaussian random variables.}
\begin{proposition}\label{prop_expected}
Suppose that the camera matrices $C_1,\ldots,C_m\in\mathbb R^{3\times 4}$ are independent random matrices with i.i.d.\ standard Gaussian entries. For each $1\leq i\leq m$ let $L_{d_i}^{(i)}\subset\mathbb P^2$ be a fixed real linear space of codimension $d_i$, such that four of the $d_i$ are equal to 1 and the rest are zero. Then,  the expected number of real solutions is
$$\operatorname{\mathbb E}  \#( \mathcal{L}_{\mathcal{C}}^{\mathbb R}\cap L_{d_{1}}^{(1)}\times \cdots\times L_{d_{m}}^{(m)}) \approx  1.7262.
 $$
(this means that these are the first digits of the actual value).
\end{proposition}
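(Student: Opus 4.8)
The plan is to reduce the statement to the expected number of \emph{real} common transversals of four random lines in $\PP^3$, and then to evaluate that expectation by integral geometry on the real Grassmannian. First I would set up the reduction using the proof of Theorem~\ref{thm_mdeg}. Exactly four of the $d_i$ equal $1$ and the rest vanish, so for those four indices the condition $\ell_i\in L_{d_i}^{(i)}$ means, after identifying $\PP^2$ with its dual, that the line $\ell_i$ passes through a fixed real point $x_i$; equivalently, the back-projected plane $H_i$ contains the back-projected line $K_i:=C_i^{-1}(x_i)\subset\PP^3$. As in the computation of $D(1,1,1,1,0,\dots,0)=2$, a tuple $\ell$ in the intersection corresponds to a line $L\subset\PP^3$ meeting all of $K_1,\dots,K_4$: such an $L$ determines every $H_i$ and hence every $\ell_i$, and by \eqref{intersection_Schubert} there are two such $L$ over $\CC$. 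Since the cameras are real, the correspondence $L\leftrightarrow\ell=\Upsilon_{\mathcal C}(L)$ is defined over $\RR$, so $\ell\in\mathcal L_{\mathcal C}^{\mathbb R}$ if and only if the associated transversal $L$ is real. Hence
\[
\mathbb E\,\#\big(\mathcal L_{\mathcal C}^{\mathbb R}\cap L_{d_1}^{(1)}\times\cdots\times L_{d_m}^{(m)}\big)
=\mathbb E\,\#\{\text{real common transversals of }K_1,K_2,K_3,K_4\}.
\]

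Second I would pin down the distribution of the $K_i$. The density of a standard Gaussian $C_i\in\RR^{3\times4}$ is invariant under $C_i\mapsto C_iQ$ for $Q\in\mathrm O(4)$, and this substitution sends $K_i=C_i^{-1}(x_i)$ to $Q^{-1}K_i$. Therefore each $K_i$ follows the unique $\mathrm O(4)$-invariant probability measure on $\mathbb G_{\mathbb R}$, and $K_1,\dots,K_4$ are independent because $C_1,\dots,C_4$ are. In particular the right-hand side above does not depend on the chosen points $x_i$: it is the expected number of real common transversals to four independent uniformly random lines in $\PP^3$.

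Third I would compute this universal constant in the Klein model. Let $\kappa_i:=\rho(K_i)$ be the Plücker points, let $q$ be the Klein quadratic form $p_0p_5-p_1p_4+p_2p_3$ of \eqref{pluecker_ideal}, and let $b$ be its polarization; a line $L$ with Plücker point $\lambda$ meets $K_i$ if and only if $b(\lambda,\kappa_i)=0$. Generically $W:=\{\lambda\mid b(\lambda,\kappa_i)=0,\ i=1,\dots,4\}$ is a $2$-dimensional subspace of $\RR^6$, and the transversals are the two points of $\PP(W)\cap\{q=0\}$. Over $\RR$ these two points are real exactly when the restriction $q|_W$ is indefinite, and complex conjugate when $q|_W$ is definite; the degenerate configurations (e.g.\ four lines on a common smooth quadric, cf.\ Lemma~\ref{le: nongeneric_schubert}) occur with probability zero. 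Thus the expected count equals $2\,\mathbb P\!\left(q|_W\text{ is indefinite}\right)$. Passing to the $\pm1$-eigenspaces of the Hodge star on $\wedge^2\RR^4\cong\RR^3\oplus\RR^3$, in which $q$ becomes $|u|^2-|v|^2$ and a uniform line corresponds to a uniform pair $(a_i,b_i)\in S^2\times S^2$, the four incidence conditions read $\langle a_i,u\rangle=\langle b_i,v\rangle$, and the problem becomes the probability that the binary form $|u|^2-|v|^2$ restricted to the solution plane $W$ is indefinite, an explicit integral over $(S^2\times S^2)^4$.

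The hard part is this last evaluation: determining the probability that a signature-$(3,3)$ quadratic form restricted to the $2$-plane cut out by four random incidence conditions is indefinite is a genuine integral-geometry computation, not a formal manipulation. I would exploit the $\mathrm{SO}(3)\times\mathrm{SO}(3)$-invariance to normalize several of the $a_i,b_i$, reducing to a low-dimensional integral, and then evaluate it---in closed form if possible, otherwise numerically to the stated precision---to obtain $2\,\mathbb P(q|_W\text{ indefinite})\approx1.7262$, consistent with the probabilistic Schubert calculus for the problem of four lines.
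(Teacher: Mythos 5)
Your reduction is exactly the paper's: by symmetry put the four codimension-one conditions on the first four factors, observe (as in the last paragraph of the proof of Theorem~\ref{thm_mdeg}) that the real intersection points of $\mathcal{L}_{\mathcal{C}}^{\mathbb R}$ with the product of linear spaces are in almost-sure bijection with the real common transversals of four lines in $\PP^3$, and use right-invariance of the Gaussian density under $\mathrm{O}(4)$ to conclude that these four lines are independent and uniformly distributed with respect to the unique orthogonally invariant measure on $\mathbb G_{\mathbb R}$. (Your $C_i\mapsto C_iQ$ is the paper's $C_i^T\mapsto UC_i^T$; your axes $K_i$ and the paper's dual lines $\{C_i^T\ell_i\mid \ell_i\in L_1^{(i)}\}$ are the same count by duality.) Where you diverge is the final step: the paper simply cites B\"urgisser and Lerario \cite{PSC}, who proved that the expected number of real lines meeting four independent uniform lines has first digits $1.7262$, whereas you propose to re-derive this constant from scratch on the Klein quadric --- writing the two transversals as $\PP(W)\cap\{q=0\}$ for the random $2$-plane $W$ cut out by the four polarized incidence conditions, so the expected count is $2\,\mathbb{P}(q|_W \text{ indefinite})$, and passing to the self-dual/anti-self-dual splitting under which $q=|u|^2-|v|^2$ and a uniform line is a uniform pair in $S^2\times S^2$. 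This is sound and is in fact precisely the mechanism behind \cite[Proposition~6.7]{PSC} (and Firsching's high-precision evaluation cited in the paper's remark), so you are effectively re-proving the cited result rather than invoking it. The one step you leave genuinely open is the evaluation of the resulting integral over $(S^2\times S^2)^4$: since the value is known only as an iterated integral, claiming the digits $1.7262$ requires a numerical evaluation with controlled error, which is exactly the labor the paper outsources to \cite{PSC}. So your argument buys self-containedness and an explicit probabilistic model at the cost of a nontrivial certified computation; with that evaluation supplied (or with the citation restored at that point), the proof is complete and correct.
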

\begin{remark}On \texttt{MathOverflow}\footnote{\scriptsize\url{https://mathoverflow.net/questions/260607/expected-number-of-lines-meeting-four-given-lines-or-what-is-1-72}} Firsching expanded the number of digits to
$$1.7262312489219034885256331685361697650475579915479447.$$
\end{remark}
\begin{proof}[Proof of Proposition \ref{prop_expected}]
By symmetry, without restriction we can assume that $d_1=d_2=d_3=d_4=1$. Following the arguments in the last paragraph in the proof of Theorem \ref{thm_mdeg} we see that the number of real points in the intersection~$\#( \mathcal{L}_{\mathcal{C}}^{\mathbb R}\cap L_{d_{1}}^{(1)}\times \cdots\times L_{d_{m}}^{(m)})$ is equal to the number of real lines intersecting the four given lines $\{C_i^T\ell_i \mid \ell_i\in L_{1}^{(i)}\}\subset \mathbb P^3$, $1\leq i\leq 4$. These are four independent random elements in the real Grassmannian $\mathbb G_{\mathbb R}$.
Since for any orthogonal matrix $U\in O(4)$ we have that $UC_i^T$ has the same distribution as $C_i^T$, the distribution of the four random lines is invariant under the $O(4)$-action on $\mathbb G_{\mathbb R}$. There is a unique orthogonally invariant probability distribution on the real Grassmannian. With respect to this distribution, B\"urgisser and Lerario showed \cite{PSC} that the first five digits of the expected number of lines intersecting four random independent lines is are 1.7262. The true value of this expected value is only known in the form of an iterated integral; see \cite[Proposition~6.7]{PSC}.
\end{proof}

%%%%%%%%%%%%%%%%%%%%%%%%%%%%%%%%%%%%%%%%%%%%%%%%%%%%%%%%%%%%%%%%%%%%%%%%%%%%%%%%%%%%%%%%%%%%%%%%%%%%%%%%%%%%%%%%%%%%%%%%%%%%%%%%%%%%%%%%%%%%%%%%%%%%%%%%%%%%%%%%%%%%%%%%%%%%%%%%%%%%%%%%%%%%%%%%%%%%%%%%%%%%
\section{Euclidean Distance Degree}\label{sec:EDD}
Minimizing the Euclidean distance of a point $u\in\mathbb R^{N}$ to an algebraic variety $X\subset \mathbb R^{N}$ is a fundamental problem in optimization. The first order optimality condition for a smooth point $x\in X$ of this optimization problem is $(x-u)^Tv = 0$ for all $v\in T_xX$, where $T_xX$ denotes the tangent space of~$X$ at~$x$. The \emph{Euclidean Distance Degree} (EDD) \cite{draisma2016euclidean} is motivated by the desire to count the number of points that satisfy these conditions. To get a well-defined count one passes to complex numbers. We consider a point $u\in\mathbb C^{N}$ and an algebraic variety $X\subset \mathbb C^{N}$ and say that a smooth point $x\in X$ is an \emph{ED-critical point}, if $(x-u)^Tv = 0$ for all $v\in T_xX$. The EDD is defined as the number of ED-critical points on $X$ when $u$ is a general point outside $X$. The EDD can be considered as a measure of complexity for solving the optimization problem of minimizing the Euclidean distance from $X$ to~$u$. In this sense, the EDD is important for applied work when data~$u$ comes with noise. To analyze this data one often tries to find the point in the variety (the mathematical model), which is closest to~$u$. It's also important to understand the singular locus when considering the EDD; the actual closest point might be singular and therefore not found as an ED-critical point.

Let us first consider the point multiview variety. Its elements are $m$-tuples of image points. The data structure for images usually is a matrix, where the $(i,j)$ entry stores the information for the pixel with spatial coordinates $i$ and $j$. Therefore, it is meaningful to consider the EDD of the intersection of the point multiview variety (which is a subvariety of $(\PP^2)^m$) with an affine patch. It was shown in \cite{EDDegree_point} that when $m\ge 2$ cameras $\mathcal C=(C_1,\ldots,C_m)$ are in general position, the EDD of  $\mathcal M_{\mathcal C}$ intersected with an affine patch is
\begin{equation}\label{EDD_points}
\frac{9}{2}m^3-\frac{21}{2}m^2+8m-4.
\end{equation}

By contrast, for the line multiview variety, there is no canonical choice of the affine patch. Therefore, we think that Euclidean distance minimization in an affine patch is less meaningful than minimization relative to other distance measures.
One option is to use a distance in the \emph{affine Grassmannian} \cite{affine_G}, which is the space of lines in $\mathbb C^2$. This would take into account the above arguments that image points are usually given in affine coordinates. Alternatively, we can use the distance $d$ from~(\ref{def_metric_Pn_product}), which measures the angle between two linear equations. For this distance, lines are considered close when their equations are close to being linearly dependent. Both models are legitimate. In the following, we discuss the EDD for the angular distance $d$. {In fact, the definition of $d([u],[v])$ in (\ref{def_metric_Pn}) can be expressed as an algebraic function in the homogeneous
coordinates of $u, v$. This already shows that the ED minimization problem is algebraic. We show that is closely connected to the usual EDD of the cone over $\mathcal L_{\mathcal{C}}$.}

Let $\pi:(\mathbb C^{3}\setminus \{0\})^m \to (\PP^2)^m$ be the canonical projection. The cone over the real line multiview variety is $\widehat{\mathcal L}_{\mathcal C}^{\mathbb R}:=\pi^{-1}(\mathcal L_{\mathcal C}^{\mathbb R})\cup\{0\}$. Let $u=(u_1,\ldots,u_m)\in (\PP^2)^m$ and $\hat u =(\hat u_1,\ldots,\hat u_m)\in \pi^{-1}(u)$
be real. Then,
$$\min_{\ell=(\ell_1,\ldots,\ell_m)\in\mathcal L_{\mathcal C}^{\mathbb R}} \,d(\ell,u)^2 =\min_{(\ell_1,\ldots,\ell_m)\in\mathcal L_{\mathcal C}^{\mathbb R}}\, \sum_{i=1}^{m} \min_{t_i\in\mathbb R}\, \frac{\Vert t_i \hat \ell_i - \hat u_i\Vert^2}{\Vert \hat u_i\Vert^2},$$
where $\pi(\hat \ell_1,\ldots,\hat \ell_m) = \ell$ and where $\Vert \cdot \Vert$ is the Euclidean norm. Therefore, if we choose the point $\hat u$ such that $\lambda:=\Vert \hat u_1\Vert = \cdots =\Vert \hat u_m\Vert$, then
\begin{equation}\label{optimization_problem}\min_{\ell\in\mathcal L_{\mathcal C}^{\mathbb R}} \,d(\ell,u)^2
=\lambda^{-1}\min_{\hat\ell\in\widehat{\mathcal L}_{\mathcal C}^{\mathbb R}} \Vert \hat{\ell}-\hat{u}\Vert^2.
\end{equation}

This motivates us to study a projective EDD of the line multiview variety $\mathcal L_{\mathcal C}$ as the number of complex critical points $\ell=(\ell_1,\ldots,\ell_m)$ with $\ell_i\neq 0$ of the Euclidean distance function from $\widehat{\mathcal L}_{\mathcal C}\subset (\mathbb C^{3})^m$ to a general point  $\hat{u}=(\hat{u}_1,\ldots,\hat{u}_m)\in(\mathbb R^{3})^m$. The next lemma shows that we may assume $u_1^Tu_1 = \cdots = u_m^Tu_m$.
\begin{lemma}
Let $\hat{\ell}=(\hat{\ell}_1,\ldots,\hat{\ell}_m)\in\widehat{\mathcal L}_{\mathcal C}$ be a critical point for the Euclidean distance function to the point $\hat{u}=(\hat{u}_1,\ldots,\hat{u}_m)\in(\mathbb R^{3})^m$ and $\lambda_1,\ldots,\lambda_m\neq 0$. Then, $\lambda.\hat{\ell}:=(\lambda_1\hat{\ell}_1,\ldots,\lambda_m\hat{\ell}_m)\in\widehat{\mathcal L}_{\mathcal C}$
is a critical point for the Euclidean distance function to $\lambda.\hat{u}:=(\lambda_1\hat{u}_1,\ldots,\lambda_m\hat{u}_m)\in(\mathbb R^{3})^m$.
\end{lemma}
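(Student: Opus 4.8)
The plan is to realize the block-scaling as a linear automorphism of the ambient space and to track how it moves the variety, the data point, and the first-order optimality condition. Write $\Lambda:=\operatorname{diag}(\lambda_1 I_3,\ldots,\lambda_m I_3)$ for the linear map on $(\mathbb C^3)^m$ that scales the $i$-th block by $\lambda_i$, so that $\lambda.\hat\ell=\Lambda\hat\ell$ and $\lambda.\hat u=\Lambda\hat u$. The first thing to record is that $\widehat{\mathcal L}_{\mathcal C}$ is invariant under $\Lambda$: since $\widehat{\mathcal L}_{\mathcal C}=\pi^{-1}(\mathcal L_{\mathcal C})\cup\{0\}$ and $\pi(\Lambda\hat\ell)=\pi(\hat\ell)$, because scaling each factor by a nonzero constant does not change its class in $(\mathbb P^2)^m$. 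This already yields the membership $\lambda.\hat\ell\in\widehat{\mathcal L}_{\mathcal C}$ asserted in the statement, and it shows that $\Lambda$ restricts to a linear isomorphism of $\widehat{\mathcal L}_{\mathcal C}$ onto itself sending the smooth point $\hat\ell$ to $\lambda.\hat\ell$.

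The second step is to transport the criticality condition through $\Lambda$. Using the bilinear form, $\hat\ell$ being an ED-critical point for $\hat u$ means $r:=\hat\ell-\hat u$ satisfies $r^Tv=0$ for every $v\in T_{\hat\ell}\widehat{\mathcal L}_{\mathcal C}$. Because $\Lambda$ is a linear automorphism of the variety, its differential identifies $T_{\lambda.\hat\ell}\widehat{\mathcal L}_{\mathcal C}=\Lambda\,T_{\hat\ell}\widehat{\mathcal L}_{\mathcal C}$; writing a general tangent vector at $\lambda.\hat\ell$ as $\Lambda v$ and using $\lambda.\hat\ell-\lambda.\hat u=\Lambda r$, the optimality condition for $\lambda.\hat\ell$ becomes $(\Lambda r)^T(\Lambda v)=r^T\Lambda^2 v=\sum_i\lambda_i^2\,r_i^Tv_i=0$ for all $v\in T_{\hat\ell}\widehat{\mathcal L}_{\mathcal C}$. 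Thus the entire lemma reduces to the single implication $r\perp T_{\hat\ell}\widehat{\mathcal L}_{\mathcal C}\Rightarrow\Lambda^2 r\perp T_{\hat\ell}\widehat{\mathcal L}_{\mathcal C}$, i.e. the residual must stay normal after the diagonal reweighting by $\Lambda^2$.

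To attack this I would invoke the explicit conormal description coming from the determinantal realization used in Lemma \ref{le: no4sing}. The map $\Psi(\hat\ell):=M(\hat\ell)$ is linear and injective (each $C_i^T$ is injective), and $\widehat{\mathcal L}_{\mathcal C}=\Psi^{-1}(\hat{\mathcal R})$, where $\hat{\mathcal R}$ is the variety of rank-$\le 2$ matrices. Hence $T_{\hat\ell}\widehat{\mathcal L}_{\mathcal C}=\Psi^{-1}(T_{M(\hat\ell)}\hat{\mathcal R})$ and the normal space is $\Psi^*\big(N_{M(\hat\ell)}\hat{\mathcal R}\big)$, with $\Psi^*(Z)_i=C_iZ_{:,i}$ and $N_{M(\hat\ell)}\hat{\mathcal R}=\{Z\mid U^TZ=0,\ ZV=0\}$ for a factorization $M(\hat\ell)=UV^T$. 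The condition $U^TZ=0$ forces every column $Z_{:,i}$ to lie on the common line $L$, and the block-Euler relations $r_i^T\hat\ell_i=0$ then fall out automatically. The plan is to show that $\Lambda^2 r$ again lies in $\Psi^*\big(N_{M(\lambda.\hat\ell)}\hat{\mathcal R}\big)$, exploiting that $M(\lambda.\hat\ell)=M(\hat\ell)\operatorname{diag}(\lambda)$ has the same column space $U$ (so $L$ is unchanged) and that $\ker C_i=\mathbb C c_i$ supplies the only freedom to modify the witness matrix $Z$ column by column.

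I expect the main obstacle to be precisely this last step. The \emph{radial} part of optimality -- each $r_i\perp\hat\ell_i$ -- is manifestly invariant under any block scaling, but the \emph{cross-block} part, encoded by the row condition $ZV=0$, equivalently $\sum_i Z_{:,i}v_i^T=0$, gets reweighted to $\sum_i\lambda_i^2 Z_{:,i}v_i^T$, and the available camera-center corrections $\mu_i c_i$ only help when $c_i$ lies on the common line $L$ -- which generically it does not. So the delicate heart of the argument is to verify that, for a genuine critical configuration, this reweighted cross-block relation still vanishes (or is repaired by the $c_i$-corrections); this is exactly where the geometry of back-projected planes through distinct centers must enter, and I would concentrate my effort there rather than on the surrounding formal manipulations.
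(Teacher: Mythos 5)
Your formal reduction is correct, and in fact more careful than the paper's own wording: the right transformation law is $T_{\lambda.\hat\ell}\widehat{\mathcal L}_{\mathcal C}=\Lambda\,T_{\hat\ell}\widehat{\mathcal L}_{\mathcal C}$ with $\Lambda=\operatorname{diag}(\lambda_1I_3,\ldots,\lambda_mI_3)$, and your observation that the lemma is exactly equivalent to the implication $r\perp T_{\hat\ell}\Rightarrow\Lambda^2r\perp T_{\hat\ell}$ is on target. But your proposal then stops: you declare the reweighted cross-block relation to be ``the delicate heart'' and say you would concentrate effort there, without actually proving it. That is a genuine gap, and it is not a peripheral one --- this invariance statement \emph{is} the entire content of the lemma; everything preceding it is bookkeeping. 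The paper's proof closes the step in one line, by asserting that because $\widehat{\mathcal L}_{\mathcal C}$ is a cone over a multiprojective variety, its tangent space is closed under entrywise scalar multiplication, so that $T:=T_{\hat\ell}\widehat{\mathcal L}_{\mathcal C}=T_{\lambda.\hat\ell}\widehat{\mathcal L}_{\mathcal C}$ and $(\lambda_1v_1,\ldots,\lambda_mv_m)\in T$ whenever $v\in T$; criticality of $\lambda.\hat\ell$ for $\lambda.\hat u$ then follows from $v^T(\lambda.\hat\ell-\lambda.\hat u)=\sum_i(\lambda_iv_i)^T(\hat\ell_i-\hat u_i)=0$. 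So to match the paper you would simply have needed to assert the blockwise invariance of $T$; instead you (rightly) flagged it and then could not establish it.

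It is worth saying plainly that your hesitation is mathematically well-founded: the invariance the paper asserts is \emph{not} a formal consequence of being a multicone. For the hypersurface $\{x_1y_1=x_2y_2\}\subset\mathbb C^2\times\mathbb C^2$, which is the cone over a curve in $\PP^1\times\PP^1$, the normal direction at $(1,1,1,1)$ is $(1,-1,1,-1)$, while at the blockwise-rescaled point $(2,2,1,1)$ it is $(1,-1,2,-2)$; hence $T_{\hat\ell}\neq T_{\lambda.\hat\ell}$ there, and a residual normal at $\hat\ell$ need not stay normal after scaling. Note also that requiring $\Lambda^2 r\perp T_{\hat\ell}$ for \emph{all} nonzero $\lambda$ forces each block-supported piece $(0,\ldots,r_i,\ldots,0)$ of the residual to lie individually in the normal space, a blockwise decomposability that a dimension count rules out at generic points of the determinantal model you set up (for $m=3$ the normal space is $2$-dimensional while a generic residual has three independent nonzero blocks) --- which is consistent with your own finding that the camera-center corrections $\mu_ic_i$ only help when $c_i$ lies on $L$. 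What the scaling \emph{does} preserve is criticality for the $\lambda$-weighted form, since $(\Lambda r)^T\Lambda^{-2}(\Lambda v)=r^Tv$. In short: your attempt is incomplete at exactly the step the paper treats as immediate, and completing it along your conormal route would require justifying an invariance that, as your analysis itself suggests, does not follow from the cone structure alone.
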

\begin{proof}
Because $\widehat{\mathcal L}_{\mathcal C}$ is the cone over the multiprojective variety $\mathcal L_{\mathcal C}$, the tangent space is closed under entrywise scalar-multiplication:
$T:=T_{\hat{\ell}}\widehat{\mathcal L}_{\mathcal C} = T_{\lambda.\hat{\ell}}\widehat{\mathcal L}_{\mathcal C}$.
Let $v=(v_1,\ldots,v_m)\in T$ be a tangent vector. We have
$$v^T(\lambda.\hat{\ell} - \lambda.\hat{u}) = \sum_{i=1}^m v_i^T(\lambda_i\hat{\ell}_i-\lambda_i \hat{u}_i) = \sum_{i=1}^m (\lambda_iv_i)^T(\hat{\ell}_i- \hat{u}_i) =0,$$%
because
$(\lambda_1v_1,\ldots,\lambda_mv_m)\in T$. 
\end{proof}
\begin{remark} This discussion applies to any subvariety in a product of projective spaces. We are unaware of any reference that defines the EDD of a multiprojective variety.
\end{remark}
To compute the EDD of the line multiview variety we turn the computation of critical points of the optimization problem (\ref{optimization_problem}) into the problem of solving a system of polynomial equations. For this, we proceed as follows. Recall from (\ref{def_tau}) the map
$\tau:\mathbb C^{2\times 2} \to \mathbb G$, which parameterizes a Zariski open subset of $\mathbb G$.
Take a line $L=\tau([\begin{smallmatrix} v_{11}&v_{12}\\
v_{21}&v_{22}\end{smallmatrix}])$. Then, $\ell=(\ell_1,\ldots,\ell_m)=\Upsilon_{\mathcal C}(L)$ is given by
\begin{equation}\label{equation_EDD1}
\ell_i = t_i\kappa_i(V),\quad \text{where}\quad \kappa_i(V):=\big(C_i \begin{bmatrix}1&0&v_{11}&v_{12}\end{bmatrix}^T\big)\ \times\ \big(C_i \begin{bmatrix}0&1&v_{21}&v_{22}\end{bmatrix}^T\big) \in\mathbb C^3,
\end{equation}
where $t_i$ is an extra variable and $\times$ denotes the cross-product in $\mathbb C^3$ (recall that the cross product $u:=v\times w$ satisfies $u^Tv=u^Tw=0$). The variables $t_1,\ldots,t_m$ model the cone over the product of projective spaces: $\kappa_i$ defines a point in the projective class of $\ell_i$ and varying $t_i$ over $\mathbb C$ gives the line through $\kappa_i$ and the origin. Fix a general $u=(u_1,\ldots,u_m)\in(\mathbb C^{3})^m$, and let us define
$$f_{u, \mathcal C}(t,V) = (u -(t_i\kappa_i)_{i=1}^m)^T(u -(t_i\kappa_i)_{i=1}^m).$$
This is a polynomial in the $4+m$ variables $v_{11} v_{12}, v_{21}, v_{22}, t_1,\ldots,t_m$.
The EDD of the line multiview variety $\mathcal L_{\mathcal C}$ is then the number of complex zeros such that $t_i\neq 0$ of the following system of $m+4$ polynomial equations in $m+4$ variables for general $u\in\mathbb C^{3m}$:
\begin{align}\label{EDD_equations}
\partial f_{u, \mathcal C}/\partial v_{11} =
\partial f_{u, \mathcal C}/\partial v_{12} =
\partial f_{u, \mathcal C}/\partial v_{21} =
\partial f_{u, \mathcal C}/\partial v_{22} &= 0\\
\partial f_{u, \mathcal C}/\partial t_1 = \cdots =\partial f_{u, \mathcal C}/\partial t_m &= 0.\nonumber
\end{align}
This system of polynomials 
is the gradient of $f_{u, \mathcal C}$ with respect to the $m+4$ variables $v_{11} v_{12}, v_{21}, v_{22}$ and $t_1,\ldots,t_m$. It will have solutions with $t_i=0$, but these give singular points on $\widehat{\mathcal L}_{\mathcal C}$ and do not correspond to points in a product of projective spaces. This is why we don't count them. Furthermore, for  $m= 3$ we have to sort out one potential singular point, while for $m\geq 4$ all computed solutions give smooth points on $\mathcal L_{\mathcal C}$ by Corollary~\ref{cor_smoothness}.

\begin{lemma}
The EDD of the line multiview variety $\mathcal L_{\mathcal C}$ is constant on a Zariski open set of $m$-tuples of cameras~$\mathcal C=(C_1,\ldots,C_m)\in(\mathbb C^{3\times 4})^m$.
\end{lemma}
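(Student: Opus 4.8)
The plan is to realize all the ED-critical points at once as the fibers of a single projection from an incidence variety, and then to invoke the classical fact that the cardinality of the generic fiber of a generically finite dominant morphism is constant on a dense Zariski-open subset of the target. Set $B := (\mathbb{C}^{3\times 4})^m$, the affine space of camera tuples, and consider the quasi-affine incidence set
\[
\mathcal{W} := \{(\mathcal{C}, u, V, t) \in B \times \mathbb{C}^{3m} \times \mathbb{C}^{2\times 2}\times (\mathbb{C}^*)^m : \text{the equations (\ref{EDD_equations}) hold}\},
\]
where the equations are read as polynomials in \emph{all} the variables $(\mathcal{C},u,V,t)$ jointly, since each entry of $\kappa_i(V)$ from (\ref{equation_EDD1}) is a polynomial in the entries of $C_i$ and in $V$. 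Let $q:\mathcal{W}\to B\times\mathbb{C}^{3m}$ be the projection forgetting $(V,t)$; by construction the fiber $q^{-1}(\mathcal{C},u)$ is exactly the set of ED-critical points $(\ell_1,\ldots,\ell_m)$ with $\ell_i\neq 0$ counted by the EDD of $\mathcal{L}_{\mathcal C}$ for the camera tuple $\mathcal{C}$ and data point $u$.

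First I would record that $q$ is generically finite. For each fixed $\mathcal{C}$ in the full-rank locus the cone $\widehat{\mathcal L}_{\mathcal C}$ is an honest affine variety, and the ED degree of any variety with respect to a general data point is finite by \cite{draisma2016euclidean}; hence the restriction of $q$ over the slice $\{\mathcal{C}\}\times\mathbb{C}^{3m}$ has finite generic fibers. By upper semicontinuity of fiber dimension (see \cite{Gathmann}) this forces every irreducible component of $\mathcal{W}$ that dominates $B\times\mathbb{C}^{3m}$ to have relative dimension $0$: a dominating component of positive relative dimension would produce, for generic $\mathcal{C}$ and then generic $u$, a positive-dimensional fiber, contradicting finiteness of the EDD. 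Consequently $q$ is generically finite, and the theorem on fiber cardinality of generically finite dominant morphisms yields a dense Zariski-open subset $\mathcal{O}\subseteq B\times\mathbb{C}^{3m}$ on which $\#\,q^{-1}(\cdot)$ takes a single constant value $N$.

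Next I would descend the constancy from the pair $(\mathcal{C},u)$ to $\mathcal{C}$ alone. Put $S:=(B\times\mathbb{C}^{3m})\setminus\mathcal{O}$, a proper Zariski-closed subset, and let $r:B\times\mathbb{C}^{3m}\to B$ be the projection onto cameras. Applying upper semicontinuity of fiber dimension to $r|_S$, the set
\[
B'' := \{\mathcal{C}\in B : \dim\bigl(S\cap(\{\mathcal{C}\}\times\mathbb{C}^{3m})\bigr) = 3m\}
\]
is Zariski-closed in $B$, and $B''\neq B$: otherwise $S$ would contain a dense subset of the irreducible variety $B\times\mathbb{C}^{3m}$, contradicting that $S$ is a proper closed set. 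Thus for every $\mathcal{C}$ in the dense Zariski-open set $B\setminus B''$, the slice $\{\mathcal{C}\}\times\mathbb{C}^{3m}$ meets $\mathcal{O}$ in a dense open subset, so for general $u$ we have $(\mathcal{C},u)\in\mathcal{O}$ and therefore $\#\,q^{-1}(\mathcal{C},u)=N$. Since the EDD of $\mathcal{L}_{\mathcal C}$ is precisely this generic fiber count, it equals $N$ for all $\mathcal{C}\in B\setminus B''$, which is the asserted Zariski-open set.

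The main obstacle I anticipate is the bookkeeping around the irreducible components of $\mathcal{W}$ and the reduction to generic finiteness: one must ensure that the excluded solutions with some $t_i=0$ (already removed by working over $(\mathbb{C}^*)^m$) and any non-dominating or spurious components do not perturb the count, and that the cited fiber-cardinality statement is applied only to the components dominating $B\times\mathbb{C}^{3m}$. Once generic finiteness along the $u$-direction is established from \cite{draisma2016euclidean}, the two semicontinuity arguments are routine.
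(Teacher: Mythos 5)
Your overall architecture---assemble a single incidence variety over the space of pairs $(\mathcal C,u)$, establish generic finiteness, invoke constancy of the fiber count of a generically finite dominant morphism, then descend from the pair $(\mathcal C,u)$ to $\mathcal C$ alone by a second semicontinuity argument---is parallel to the paper's proof, which builds the family $\mathcal E=\bigcup_{\mathcal C}(\mathcal E_{\mathcal C}\times\{\mathcal C\})$ out of the ED correspondences of \cite[Section 4]{draisma2016euclidean} and concludes via Noether normalization together with the parameter continuation theorem \cite[Theorem 7.1.1]{Sommese:Wampler:2005}. Your descent step (showing the set $B''$ of camera tuples whose entire $u$-slice lies in the bad locus is closed and proper, so generic $u$ works for every $\mathcal C$ off $B''$) is correct, and is in fact a more self-contained replacement for that last citation.

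The genuine gap is in your generic-finiteness step, and it stems from your choice of incidence variety. The fiber $q^{-1}(\mathcal C,u)$ is the zero set of the gradient system (\ref{EDD_equations}), i.e.\ of the pullback of the squared-distance function along the parametrization $\Psi(V,t)=(t_i\kappa_i(V))_{i=1}^m$; a zero of this system expresses only that $u-\Psi(V,t)$ is orthogonal to the image of $D\Psi(V,t)$. Where $D\Psi$ has full rank $4+m$ this is equivalent to ED-criticality on the cone $\widehat{\mathcal L}_{\mathcal C}$, but on the degenerate locus (for instance $\kappa_i(V)=0$, the lines through the $i$th camera center, where $\partial f_{u,\mathcal C}/\partial t_i$ vanishes identically, or the isotropic locus $\kappa_i(V)^T\kappa_i(V)=0$) the orthogonality condition is strictly weaker, so the fiber can strictly contain the set of ED-critical points. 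Consequently your assertion that the fiber ``is exactly the set of ED-critical points,'' and the ensuing deduction that finiteness of the EDD from \cite{draisma2016euclidean} forces finite generic fibers of $q$, do not follow as stated: a dominating component of $\mathcal W$ of relative dimension $0$ consisting of such spurious zeros would make your constant $N$ strictly larger than the EDD, while a dominating component of positive relative dimension supported on the degenerate locus is not excluded by finiteness of the EDD, since its points are not ED-critical points of the variety. To close the gap you would either have to show by a stratified dimension count on the loci where $\operatorname{rank} D\Psi$ drops that the degenerate zeros do not dominate $B\times\mathbb C^{3m}$, or, more simply, do what the paper does: build the family from the intrinsic ED correspondence $\mathcal E_{\mathcal C}\subset \mathbb C^{3m}\times\mathbb C^{3m}$ of the cone, for which \cite[Theorem 4.1]{draisma2016euclidean} gives directly that the generic fiber over $u$ is finite of cardinality exactly the EDD, and whose union over $\mathcal C$ is a variety because the defining equations of $\mathcal L_{\mathcal C}$ depend polynomially on $\mathcal C$ by Theorem \ref{thm: main2}. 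Grafted onto that intrinsic family, your two semicontinuity arguments complete the proof correctly.
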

\begin{proof} Let $\varepsilon_{\mathcal C}$ denote the EDD of $\mathcal L_{\mathcal C}$.
Let $\mathcal E_{\mathcal C}\subset \mathbb C^{3m}\times \mathbb C^{3m}$ be the \emph{ED correspondence} of $\widehat{\mathcal L}_{\mathcal C}$, as defined in \cite[Section 4]{draisma2016euclidean}. By \cite[Theorem 4.1]{draisma2016euclidean}, we have a projection $\pi:\mathcal E_{\mathcal C}\to (\mathbb C^{3})^m$,
such that for a general $u\in (\mathbb C^{3})^m$
the fiber $\pi^{-1}(u)$ is finite and consists of $\varepsilon_{\mathcal C}$ points. Consider now
$$\mathcal E:=\bigcup_{\mathcal C \in(\mathbb C^{3\times 4})^m} (\mathcal E_{\mathcal C} \times \{\mathcal C\}).$$
By Theorem \ref{thm: main2} the equations for $\mathcal L_{\mathcal C}$ are polynomial in $\mathcal C$, which implies that $\mathcal E$ is a variety. We define the projection $\Pi:\mathcal E\to (\mathbb C^{3})^m\times (\mathbb C^{3\times 4})^m$, $(q,\mathcal C)\mapsto (\pi(q),\mathcal C)$. By construction, for general $(u,\mathcal C)$, the fiber $\Pi^{-1}(u,\mathcal C)$ has cardinality $\varepsilon_{\mathcal C}$. From Noether's Normalization Lemma \cite[Chapter~10]{Gathmann:CommAlgebra} it follows
that there exists a system of polynomial equations $G_{u,\mathcal C}(\ell)$ in $\ell$ whose coefficients depend polynomially on $u$ and $\mathcal C$, such that
$\Pi^{-1}(u,\mathcal C)=\{(\ell,u,\mathcal C)\in\mathcal E \mid G_{u,\mathcal C}(\ell) = 0\}$; see, e.g., \cite[Remark 4.13]{breiding2021algebraic}. This implies that there exists a proper algebraic subvariety $\Sigma \subset (\mathbb C^3)^m\times (\mathbb C^{3\times 4})^m$ and a number $N$, such that the number of zeros of $G_{u,\mathcal C}$ is~$N$ when $(u,\mathcal C)\not\in\Sigma$; see, e.g., \cite[Theorem 7.1.1]{Sommese:Wampler:2005}. Therefore, $\Pi^{-1}(u,\mathcal C)$ is constant on a Zariski open subset of $(\mathbb C^3)^m\times (\mathbb C^{3\times 4})^m$.
\end{proof}

To get an idea of the EDD of the multiview variety for general cameras, we solve the system of equations %in 
above using \texttt{HomotopyContinuation.jl} \cite{HC.jl}. We certify the outcome of the computation with the certification method based on interval arithmetic implemented in \texttt{HomotopyContinuation.jl}; see \cite{breiding2021certifying}. The algorithm implemented in \texttt{HomotopyContinuation.jl} provides intervals for the real and imaginary  parts of every variable, such that the true solution provably lies in these intervals. This makes it possible to certify that $t_i\neq 0$ (by checking if zero is contained in these intervals).
As explained in \cite[Section 1.1]{breiding2021certifying} we get provably lower bounds for the EDD. This is summarized in the next theorem.

\begin{theorem}\label{EDD_thm}
Let~$\mathrm{EDdeg}(m)$ denote the EDD of the line multiview variety $\mathcal L_{\mathcal C}$ for a general collection of cameras $\mathcal C=(C_1,\ldots, C_m)$. Then:
\begin{align*}
\mathrm{EDdeg}(3)&\geq 74; \\
\mathrm{EDdeg}(4)&\geq 934; \\
\mathrm{EDdeg}(5)&\geq 3651;\\
\mathrm{EDdeg}(6)&\geq 9887; \\
\mathrm{EDdeg}(7)&\geq 21807; \\
\mathrm{EDdeg}(8)&\geq  42073; \\
\mathrm{EDdeg}(9)&\geq 73883;\\
\mathrm{EDdeg}(10)&\geq 120923; \\
\mathrm{EDdeg}(11)&\geq 187406.
\end{align*}
\end{theorem}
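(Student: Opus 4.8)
The plan is to reduce the computation of $\mathrm{EDdeg}(m)$ to solving the explicit polynomial system \eqref{EDD_equations} and then to rigorously certify the numerically obtained solutions. Since the preceding lemma shows that the EDD is constant on a Zariski open subset of camera tuples, it suffices to fix one generic collection $\mathcal{C} = (C_1, \ldots, C_m)$ together with one generic data point $u \in \mathbb{C}^{3m}$. I would draw the entries of the $C_i$ and of $u$ at random, so that with probability one they avoid the proper subvariety $\Sigma$ on which the solution count drops, and so that in particular no four camera centers are collinear and Theorem~\ref{thm: main1} applies.

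First I would assemble the squared-distance function $f_{u,\mathcal{C}}(t,V)$ from \eqref{equation_EDD1} and form its gradient with respect to the $m+4$ variables $v_{11}, v_{12}, v_{21}, v_{22}, t_1, \ldots, t_m$. Second, I would solve the resulting square system of $m+4$ equations using the numerical homotopy-continuation solver of \texttt{HomotopyContinuation.jl} \cite{HC.jl}, tracking paths from a suitable start system (total-degree or polyhedral, supplemented by monodromy to recover additional isolated solutions). Third --- and this is the crucial step that upgrades a heuristic count to a theorem --- I would apply the interval-arithmetic certification of \cite{breiding2021certifying} to every approximate solution. Certification provides, for each candidate, rigorous intervals enclosing the real and imaginary parts of its coordinates together with a proof that a genuine solution of the system lies inside them. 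Finally, among the certified solutions I would keep only those for which $0$ is provably excluded from the certified interval of every $t_i$; these are exactly the solutions that descend to honest points of the multiprojective variety rather than to the apex $t_i = 0$ of the cone $\widehat{\mathcal{L}}_{\mathcal C}$. For $m = 3$ I would additionally discard the single rank-one point guaranteed by Corollary~\ref{cor_smoothness}, whereas for $m \geq 4$ all such solutions are automatically smooth by the same corollary. The number of surviving certified solutions is then a rigorous lower bound for $\mathrm{EDdeg}(m)$, which yields the stated inequalities.

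The main obstacle is \emph{completeness} of the homotopy: there is no a priori guarantee that path tracking discovers every isolated critical point, and the probability of missing solutions grows with $m$ as the system size and the number of tracked paths increase. This is precisely why the theorem asserts lower bounds rather than equalities --- certification can verify that each reported point is genuine, but not that the list is exhaustive. A secondary technical point is that the parametrization $\tau$ from \eqref{def_tau} covers only the Zariski open chart $p_1 \neq 0$ of $\mathbb{G}$, so one should note that for a generic data point $u$ no critical line escapes this chart; this does not affect the certified count, but must be kept in mind when interpreting the output of \eqref{optimization_problem}.
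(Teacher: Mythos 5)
Your proposal matches the paper's own proof essentially step for step: the paper likewise fixes generic cameras, solves the gradient system \eqref{EDD_equations} with \texttt{HomotopyContinuation.jl} (polyhedral homotopy for $m\leq 6$, monodromy for $m\geq 7$), certifies the solutions by the interval-arithmetic method of \cite{breiding2021certifying}, uses the certified intervals to exclude solutions with $t_i=0$ and sorts out the one potential singular point when $m=3$ (invoking Corollary \ref{cor_smoothness} for $m\geq 4$), and asserts only lower bounds precisely because path tracking is not guaranteed to be exhaustive. Your approach is correct and there are no gaps to report.
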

\begin{remark}
For $m\leq 6$ we could use the polyhedral homotopy algorithm by Hubert and Sturmfels \cite{HuSt95} implemented in \texttt{HomotopyContinuation.jl}. For $m\geq 7$ this did not work anymore, and we had to use monodromy \cite{DHJLLS}.
\end{remark}
The numbers of Theorem \ref{EDD_thm} compare to the formula for the point multiview variety (\ref{EDD_points}) as follows. For $m=2,3,4,5$, respectively, we get EDDs $6,47,148,336$, respectively, for the point multiview variety.

%%%%%%%%%%%%%%%%%%%%%%%%%%%%%%%%%%%%%%%%%%%%%%%%%%%%%%%%%%%%%%%%%%%%%%%%%%%%%%%%%%%%%%%%%%%%%%%%%%%%%%%%%%%%%%%%%%%%%%%%%%%%%%%%%%%%%%%%%%%%%%%%%%%%%%%%%%%%%%%%%%%%%%%%%%%%%%%%%%%%%%%%%%%%%%%%%%%%%%%%%%%%
\section{Sensitivity}
In the previous sections, we have approached the line multiview variety from the perspective of algebraic geometry, studying its algebraic properties. In this section, we want to consider our setup from the point of view of numerical analysis.

We restrict here to \emph{real} data because data in computer vision is usually given as point points and lines with real coordinates, not complex. If $\mathcal C=(C_1,\ldots, C_m)\in(\mathbb R^{3\times 4})^m$ is a collection of real camera matrices, we have the real version of the camera map from (\ref{def_camera_map}): $\Upsilon_{\mathcal C}: \mathbb G_{\mathbb R}\dashrightarrow (\mathbb P^2)^m,$
which takes a real line $L$ in three-space to an $m$-tuple $\ell$ of real lines in two-space. In applications of computer vision one often wants to go the other way and reconstruct $L\in \mathbb G_{\mathbb R}$ from the tuple $\ell = \Upsilon_{\mathcal C}(L)$. This problem is called a \emph{triangulation} problem, a classic but fundamental problem in computer vision. Obtaining fast and accurate triangulation is at the core of many research efforts. We now start an investigation of the sensitivity of the triangulation problem for lines.

We have shown in Propoposition \ref{properties_of_LMV} that generically $\Upsilon_{\mathcal C}$ is identifiable, meaning that for general $L\in\mathbb G_{\mathbb R}$ we have $\Upsilon_{\mathcal C}^{-1}(\Upsilon_{\mathcal C}(L)) = \{L\}$.
This shows that, in principle, the triangulation problem for lines is theoretically feasible.
But this does not imply that it is numerically feasible -- small errors in the data $\ell$, for instance as a result of noisy measurements during the image formation process, could imply large errors in the solution $L$. To estimate this sensitivity we make the following numerical experiments.

We consider a tuple of real lines $\ell = \Upsilon_{\mathcal C}(L)$, $L\in\mathbb G_{\mathbb R}$. Adding noise to $\ell$ gives $u$ near $\ell$. To reconstruct $L$ from~$u$ we solve the distance minimization problem
$\min_{L\in\mathbb G_{\mathbb R}} \,d(\Upsilon_{\mathcal C}(L),u)^2$. We solve this optimization problem by computing the zeros of the system of equations (\ref{EDD_equations}) using \texttt{HomotopyContinuation.jl}  \cite{HC.jl}. This gives $L_0\in\mathbb G_{\mathbb R}$.
To estimate the sensitivity we then record the number
\begin{equation}\label{def_empirical_error}
e_\mathrm{lines} := \log_{10} \frac{\mathrm{dist}(L,L_0)}{d(\ell,u)}.
\end{equation}
The interpretation of $e_\mathrm{lines}$ is that the error in the data $\ell$ gets amplified by a factor of $10^{e_\mathrm{lines}}$. Notice that we rely on a choice of measuring distances: for distances in $(\PP^2)^m$ we use the distance in (\ref{def_metric_Pn_product}) and for distances in the Grassmannian we use (\ref{def_distance}). These are not canonical choices.

\begin{figure}
\begin{center}

\includegraphics[width = 0.425\textwidth]{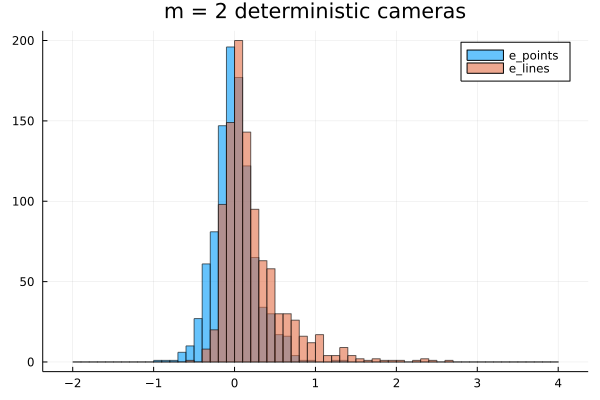}
\hfill
\includegraphics[width = 0.425\textwidth]{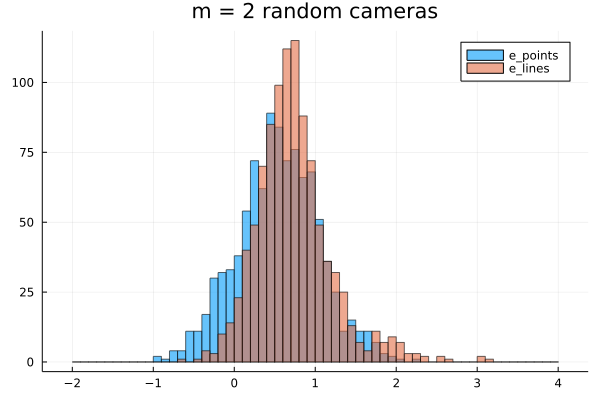}

\includegraphics[width = 0.425\textwidth]{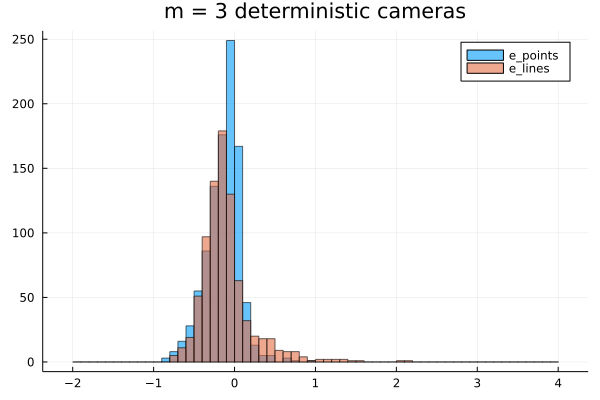}
\hfill
\includegraphics[width = 0.425\textwidth]{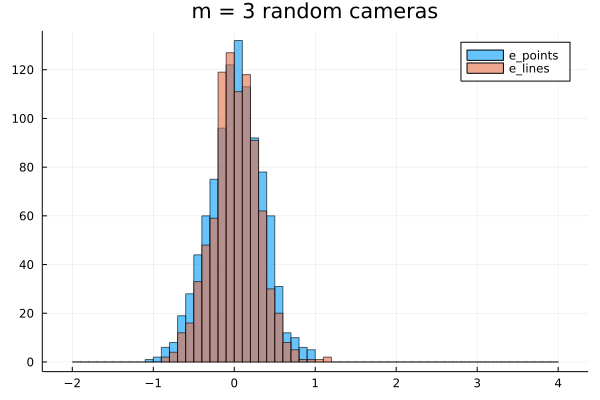}

\caption{\label{fig3} The four histograms show the outcomes of the experiments described in this section. The two pictures on the top show the experiments for $m=2$ cameras. The pictures on the bottom show the experiments for $m=3$ cameras. The left pictures show the empirical distribution of $e_\mathrm{lines}$ (defined in (\ref{def_empirical_error})) and $e_{\mathrm{points}}$ (defined in (\ref{def_empirical_error2})) for the the cameras in (\ref{cameras}) and for 1000 randomly chosen points (blue) and for 1000 randomly chosen lines (orange). The right pictures show the same experiment but for a setup of cameras chosen by sampling independent matrices with i.i.d.\ standard normal entries. The plots we created using \texttt{Plots.jl} \cite{plots}.}
\end{center}
\end{figure}

In our experiment, we take $m=2$ and $m=3$ cameras. In the first experiment, shown in the left pictures in Figure \ref{fig3}, we take the camera matrices
\begin{equation}\label{cameras}
C_1=\begin{bmatrix}0 & 1 & 0 & 0 \\0&0 & 1 & 0 \\0&0 & 0 & 1 \end{bmatrix},\quad
C_2=\begin{bmatrix}0 & 1 & 0 & 0\\0 & 1 & 1 & 0\\0 & 0 & 0 & 1\end{bmatrix},\quad
C_3=\begin{bmatrix}1 & 0 & 0 & 0\\0 & 1 & 1 & 0\\0 & 0 & 0 & 1\end{bmatrix}
\end{equation}
(in the case $m=2$ we take $C_1$ and $C_2$). In the second experiment, which is shown in the right pictures in Figure \ref{fig3}, we take randomly chosen real cameras by sampling independent $3\times 4$ matrices with i.i.d.\ real standard Gaussian entries.
In both settings we sample independently 1000 points~$L\in\mathbb G_{\mathbb R}$ by sampling~4 i.i.d.\ real standard Gaussian random variables $ v_{11},v_{12},
v_{21},v_{22}$ and setting~$L=\tau([\begin{smallmatrix} v_{11}&v_{12}\\
v_{21}&v_{22}\end{smallmatrix}])$. Then, we compute $\ell=(\ell_1,\ldots,\ell_m)= \Upsilon_{\mathcal C}(L)$. Every $\ell_i$ is given to us in terms of three coordinates $\hat \ell_i\in\mathbb R^3$, so that~$\pi(\hat \ell_i)=\ell_i$. We sample $x_1,\ldots,x_m$ independently, where $x_i$ is a point chosen uniformly in the sphere of radius $\epsilon\Vert \hat\ell_i\Vert$, $\epsilon = 10^{-12}$, and then set $u_i=(\hat\ell_i + x_i)/ \Vert\hat\ell_i + x_i\Vert$. We use $u=(u_1,\ldots,u_m)\in(\mathbb R^3)^m$ for setting up the system of polynomial equations (\ref{EDD_equations}).

We do a similar experiment in the case of point reconstruction, where we aim to reconstruct a real point $[1:P_1:P_2:P_3]\in\mathbb P^3$ from $m$ real images $[1:p_1^{(i)}:p_2^{(i)}]\in\mathbb P^2$, $1\leq i\leq m$, where the $i$th image is taken using camera $C_i$. In this setting, there is a natural choice of affine chart setting the first coordinate equal to $1$. Let us write $P:=(P_1,P_2,P_3)\in\mathbb R^3$ and $p^{(i)}:=(p_1^{(i)},p_2^{(i)})\in\mathbb R^2$. For every~$i$ we consider $q^{(i)} := p^{(i)} + x_i$,
where $x_i$ is a point chosen uniformly in the sphere of radius $\epsilon\Vert p^{(i)}\Vert$, $\epsilon = 10^{-12}$, such that $x_1,\ldots,x_m$ are independent. For reconstructing $P$ we can set up a system of polynomial equations to minimize $\sum_{i=1}^m \Vert p^{(i)} - q^{(i)}\Vert^2$
using first order optimality conditions; see \cite{computer-vision2022} for a detailed explanation how to implement this in \texttt{HomotopyContinuation.jl}.
Let $Q\in\mathbb R^3$ be the computed minimizer. We measure the (relative) error by
\begin{equation}\label{def_empirical_error2}
e_\mathrm{points} := \log_{10} 
\Bigg(\frac{\Vert P -Q\Vert}{\sqrt{\sum_{i=1}^m\Vert p^{(i)} -q^{(i)}\Vert^2}} \; 
\frac{\sqrt{\sum_{i=1}^m\Vert p^{(i)}\Vert^2}}{\Vert P\Vert}\Bigg) = 
\log_{10} \frac{\Vert P -Q\Vert}{\epsilon \Vert P\Vert}
\end{equation}
(we measure relative errors, because floating point arithmetic introduces relative perturbations).

The pictures in Figure \ref{def_empirical_error} show the empirical distribution of the empirical errors (\ref{def_empirical_error}) and (\ref{def_empirical_error2}) in histograms. 
The distributions for points and lines seem similar. Of course, other distance measures might imply different distributions, but it is not unreasonable to expect similar sensitivity properties for both points and lines reconstruction problems.
The code for our experiments is attached to the~\texttt{arXiv} version of this article.

\section{Conclusions}
{Given $m$ pinhole cameras we give a set of polynomials cutting out the line multiview variety, that is, we give polynomial constraints satisfied by 2-dimensional line correspondences that can be reconstructed to a~3-dimensional line. Our results extend the description of the line multiview variety for 3 views done by Kileel~\cite{Kileel_Thesis} and also consider the case of cameras not in generic position, that is, when more than 4 cameras are collinear. 
In addition to these polynomial equations, we study some smooth and singular points in the line multiview variety and explore numerically the sensitivity of line reconstruction to noise in the data. 
From this work, there are natural research questions that we would like to pursue in the future. 

We aim to study the ideal of the multiview variety with the goal of finding generators and computing a Gröbner basis for this ideal. Moreover, we aim to explore the ED degree of the line multiview variety in a formal setting, and we aim to study sensitivity systematically by analyzing condition numbers. For the point triangulation problem, this was initiated in \cite[Section~9]{BV2021}. Recently, Fan, Kileel, and Kimia studied the condition number associated to another problem in computer vision called resectioning  \cite{fan2021instability}.

We hope that this algebraic study of line correspondences in $m$ views allows for the creation and implementation of robust reconstruction algorithms, and for the improvement of the noise correction of the currently used algorithms. }

%%%%%%%%%%%%%%%%%%%%%%%%%%%%%%%%%%%%%%%%%%%%%%%%%%%%%%%%%%%%%%%%%%%%%%%%%%%%%%%%%%%%%%%%%%%%%%%%%%%%%%%%%%%%%%%%%%%%%%%%%%%%%%%%%%%%%%%%%%%%%%%%%%%%%%%%%%%%%%%%%%%%%%%%%%%%%%%%%%%%%%%%%%%%%%%%%%%%%%%%%%%%%%%%%%

\bibliographystyle{alpha}
\bibliography{VisionBib}
\newpage

\end{document}